\DeclareSymbolFont{cyrletters}{OT2}{wncyr}{m}{n}
\DeclareMathSymbol{\Sha}{\mathalpha}{cyrletters}{"58}
\newtheorem*{coro*}{Corollary}
\newtheorem*{conj*}{Conjecture}
\newtheorem*{lemm*}{Lemma}
\providecommand{\smalltwomat}[4]{\left(\begin{smallmatrix}#1&#2\\#3&#4\end{smallmatrix}\right)}
\newcommand{\beq}{\begin{equation}\begin{aligned}}
\newcommand{\eeq}{\end{aligned}\end{equation}}
\newcommand{\beqq}{\begin{equation*}\begin{aligned}}
\newcommand{\eeqq}{\end{aligned}\end{equation*}}
\theoremstyle{definition}
\theoremstyle{remark}
\newtheorem{remark*}{Remark}
\numberwithin{equation}{subsection}
\newcommand{\bmu}{\boldsymbol{\mu}} 
\newcommand{\lb}[1]{\label{#1}}
\newcommand{\ts}{\times}
\newcommand{\one}{\mathbf{1}}
\newcommand{\Q}{\mathbf{Q}}
\newcommand{\GL}{\mathrm{GL}}
\newcommand{\G}{\mathrm{G}}
\newcommand{\X}{\mathscr{X}}
\newcommand{\cH}{\mathscr{H}}
\newcommand{\la}{\langle}
\newcommand{\ra}{\rangle}
\newcommand{\cW}{\mathscr{W}}
\newcommand{\Z}{\mathbf{Z}}
\newcommand{\frakp}{\mathfrak{p}}
\newcommand{\into}{\hookrightarrow}
\newcommand{{\calG}}{\mathscr{G}}
\newcommand{\C}{\mathbf{C}}
\newcommand{\N}{\mathbf{N}}
\newcommand{\OO}{\mathscr{O}}
\newcommand{\bks}{\backslash}
\newcommand{\baar}{\overline}
\newcommand{\eps}{\varepsilon}
\newcommand{\vpi}{\varpi}
\newcommand{\wtil}{\widetilde}
\newcommand{\W}{\mathscr{W}}
\newcommand{\vol}{\mathrm{vol}}
\newcommand{\lm}{\lambda}
\newcommand{\sg}{\sigma}
\newcommand{\Gal}{\mathrm{Gal}}
\newcommand{\Ker}{\mathrm{Ker}\,}
\newcommand{\Hom}{\mathrm{Hom}\,}
\newcommand{\End}{\mathrm{End}\,}
\newcommand{\ot}{\otimes}
\newcommand{\llb}{\llbracket}
\newcommand{\rrb}{\rrbracket}
\newcommand{\Spec}{\mathrm{Spec}\,}
\newsavebox\tempbox
\let\svwidetilde\widetilde
\renewcommand\widetilde[1]{\sbox\tempbox{$#1$}\svwidetilde{\usebox{\tempbox}}}
   \def\XXint#1#2#3{{\setbox0=\hbox{$#1{#2#3}{\int}$}
        \vcentre{\hbox{$#2#3$}}\kern-.5\wd0}}
\title[Local Langlands in analytic families]{Local Langlands correspondence, local factors,\\and zeta integrals  in analytic families}
\author{Daniel Disegni} 
\address{Department of Mathematics, Ben Gurion University of the Negev, Be'er Sheva 84105, Israel}
\email{daniel.disegni@gmail.com}
\begin{document}

\begin{abstract} We study the variation of the local Langlands correspondence for $\GL_{n}$ in characteristic-zero families. We establish an existence and uniqueness theorem for a correspondence in families, as well as a recognition theorem for when a given pair of  Galois-  and reductive-group- representations can be identified as local Langlands correspondents. The results, which can be used to study local-global compatibility questions along eigenvarieties,   are largely analogous to those  of Emerton, Helm,  and Moss on the local Langlands correspondence over local rings of mixed characteristic. We apply the theory to the interpolation of local zeta integrals and of $L$- and $\eps$-factors.  
\end{abstract} 

\thanks{2010 Mathematics Subject Classification:  11F33; 11F85; 11F80}

\maketitle
\tableofcontents

\section{Introduction}
The aim of this work is to study the variation in characteristic-zero families of the local Langlands correspondence for $\GL_{n}$,  with applications to the variation  of local $L$-functions, $\eps$- and $\gamma$-factors,  and zeta integrals.  

Our results on the local Langlands correspondence are analogous to those of  Emerton, Helm, and Moss, who in an important series of papers \cite{EH, helm-bern, helm-whitt, moss-gamma, DL, moss-interpol, helm-curtis, hm-conv}  
studied the variation of such objects in families over complete local rings of mixed characteristics; we refer to those works collectively as [Mixed]. The proofs use in a crucial way  some of their ideas (but certainly  not all: our context  is less delicate).  

We hope nevertheless that the present work will be a useful contribution  to the literature: families over global  characteristic-zero bases arise naturally in the context of eigenvarieties, and there are also many examples of   Galois-modules over local $\Z_{p}$-algebras $R$ which are  flat as $R[1/p]$-modules but not as $R$-modules;\footnote{A first example occurs when $R=\mathbf{T}_{\frak{m}}$ is a non-Gorenstein local Hecke  algebra as in \cite{kilford} (or a corresponding big Hecke algebra), and we consider the first homology  (possibly completed) of a modular curve localised at $\frak{m}$.}  such families lie outside the framework of [Mixed]. 
 Moreover  we pay special attention to questions of rationality in the coefficients.
 Finally, readers interested in [Mixed] may still find this paper a useful point of entry before diving into the subtleties of that context:  while we do refer to  [Mixed] for the proofs of several lemmas, the whole theory here  is presented  in an essentially self-contained fashion. 

In \S\ref{main} we  describe  our main results  and the organisation of the  paper. In \S\ref{appl} we sketch  the typical applications  that we have in mind.
 
\subsection{The results} \label{main}   Let $F$ be a non-archimedean local field. Let  $W_{F}$ (resp. $W_{F}'$) the Weil (resp. Weil--Deligne) group of $F$, and let $G=G_{n}:=\GL_{n}(F)$.
Representations of $W_{F}$ and $W_{F}'$  will be tacitly understood to be (Frobenius-) semisimple; 
representations of $G$ will be  understood to be  smooth. Let $K$ be a field of characteristic zero and ${\rm Noeth}_{K}$ be the category of reduced Noetherian schemes over $K$. 

In \S\ref{sec: pi gen} we recall basic facts from the theory of Bernstein--Zelevinsky and its classification of representations of $G_{n}$ in terms of  supercuspidal supports and multisegments of such. We then define the \emph{generic} local Langlands correspondence
$$r'\mapsto \pi_{\rm gen} (r') $$
 after Breuil--Schneider: it is a map  from Frobenius-semisimple $n$-dimensional Weil--Deligne representations over $K$ (up to isomorphism) to  indecomposable finite-length  representation of $G_{n}$ over $K$ which are generic (i.e. admit a unique Whittaker functional);  it agrees with the Tate-normalised Langlands correspondence $\pi(\cdot)$ for those $r'$ such that $\pi(r')$ is generic, and it  is compatible with automorphisms of the coefficient field. In \S\ref{sec: LL Ber} we define the rational Bernstein `variety'  ${\frak  X}_{n}/$: it is a scheme locally of finite type over $\Q$, whose base-change to $\C$ has the usual Bernstein centre of $G_{n}$ as ring of functions. The points of ${\frak X}_{n}$ are naturally in bijection with Galois orbits of supercuspidal supports for $G_{n}$. We show in Theorem \ref{coarse} that its base-change ${\frak X}_{n,K}$ coarsely pro-represents  the functor on ${\rm Noeth}_{K}$ which sends $X$ to the set of isomorphism classes  of rank-$n$  representations of $W_{F}$ over $X$. This result (the \emph{semisimple} local Langlands correspondence in families) is  essentially due  to Helm \cite{helm-curtis}. In \S\ref{sec: ext} we define an extension ${\frak X}_{n}'\to {\frak X}_{n}$ of the Bernstein variety, whose points are in bijection with Galois orbits of  multisegments of supercuspidal supports. We then show that $\frak X_{n}'$ coarsely pro-represents the functor on ${\rm Noeth}_{K}$  of rank-$n$  families of Weil--Deligne representations with locally constant monodromy. 
 
In \S\ref{sec: LLF} we define, for any object $X$  of ${\rm Noeth}_{K}$, the key notion of a \emph{co-Whittaker} $\OO_{X}[G_{n}]$-module, after Helm: when $X=\Spec \C$ this singles out a class of generic $G_{n}$-representations containing the image of $\pi_{\rm gen}$. Given an $n$-dimensional Weil--Deligne representation $r'$ over $X$, we show that there exists a torsion-free co-Whittaker module $\pi(r') $ whose special fibre over  each generic point  (in the sense of algebraic geometry) $x\in X$ satisfies  
\beq\label{sp intro} \pi(r')_{|x}\cong  \pi_{\rm gen}(r'_{|x}).
\eeq
Moreover such $\pi(r')$ is unique up to twisting by a line bundle with trivial $G_{n}$-action and it satisfies \eqref{sp intro} at all points $x$ lying in a unique irreducible component of $X$ (in fact a  weaker condition suffices): see Theorem \ref{theo-LLF}. The construction of $\pi(r')$ is based on  the special  but almost universal case  $X={\frak X}_{n}$.

We then prove a recognition theorem (Theorem \ref{recognise}): given a rank-$n$ Weil--Deligne representation $r'$,  a finitely-generated $G_{n}$-representation $V$ over $X$, and a dense subset $\Sigma\subset X$ such that $V_{|x}=\pi(r'_{|x})=\pi_{\rm gen}(r'_{|x})$ for all $x\in \Sigma$, we show that there exists an open $U\supset X$ containing $\Sigma$ such that $V_{|U}\cong \pi(r'_{|U})$, up to tensoring with a line bundle. (The open $U$ is the locus where $V$ is co-Whittaker.) Our results hold more generally for representations of the group $\prod_{v\in S}\GL_{n}(F_{0,v})$, where the product ranges over a finite set of non-archimedean completions of a number field $F_{0}$.

Finally, in \S\ref{sec: Lz} we show that when $r_{1}'$, $r_{2}'$  are Weil--Deligne representations with locally constant monodromy over $X$ of ranks $n_{1}\geq n_{2}$ respectively, and $\pi_{i}=\pi(r_{i}')$, the 
  local Rankin--Selberg zeta integrals for  $\pi_{1}\times \pi_{2}$, divided by the corresponding local $L$-value, interpolate to a regular function on $X$. The key ingredient is  a result of Cogdell and Piatetski-Shapiro \cite{cps}, which in our language is seen as the case where $X$ is a cover of a connected component of ${\frak X}_{n, \C}'$.
We use these results to provide an interpolation of local $\gamma$- and $\eps$- factors in families.
  
  The same principles apply to many other  integrals of use in the representation theory of $G_{n}$: as a sample we show that the standard invariant inner product on unitarisable generic representations of $G_{n}$ can also  be interpolated in families parametrised by Weil--Deligne representations with locally constant monodromy.

\subsection{Intended applications}\label{appl}
 The typical situation where our results apply  is roughly  the following. Let $p$ be a rational prime. Let $\G$ be a reductive group over $\Q $ such that $\G({\Q_{\ell}})\cong \GL_{n}(\Q_{\ell})=:G_{n}$ for  a prime $\ell\neq p$.\footnote{More generally, the same ideas apply to the following case. Let   $F_{0}\subset F_{1}$ be an extension of number fields, let $S_{0}$ be a finite set of finite places of $F_{0}$ disjoint from those above $p$, and let $S_{1}$ be the set of places of $F_{1}$ above $F_{0}$. Then we may consider a reductive group  $\G$ over $F_{0}$ such that $\prod_{v\in S_{0}}\G_{F_{0,v}}\cong\prod_{w\in S_{1}}\GL_{n/ F_{1,w}} $.}
 Let   $X=\Spec R$ where  $R$ is either the coordinate ring of an affinoid open of a $p$-adic eigenvariety for the group $\G$, or $R$ is a localisation of $R^{\circ}[1/p] $ where $R^{\circ}$ is the base of a $p$-adic Hida family for $\G$. Let $\Sigma\subset X$ be the set of classical points and assume it is dense (otherwise we replace $X$  by the Zariski closure of $\Sigma$). One can often construct a family $\rho$ of $n$-dimensional representations of the Galois group of $\Q_{\ell}$ over $X$ and, by global methods,  a finitely generated  torsion-free $\OO_{X}[G_{n}]$-module $\Pi$.
 
\subsubsection{Local-global compatibility along eigenvarieties} In the above situation,  one often knows that (i) $\Pi_{|x}$ is generic (e.g. because it is  the local component of a global cuspidal automorphic representation), and (ii)
 points $x\in \Sigma $ satisfy local-global compatibility, that is that the Weil--Deligne representation $r'_{|x}:= {\rm WD}(\rho_{|x})^{\text{Fr-ss}} $ is associated with $\Pi_{|x}$ by the local Langlands correspondence $\pi$. As recalled in Lemma \ref{l-mon}, $\rho $ itself admits a Weil--Deligne representation  $r'={\rm WD}(\rho)$. 
 Then the recognition theorem \ref{recognise} allows to identify, up to twisting with a line bundle,  the $\OO_{X}[G_{n}]$-modules  $\Pi$ and $\pi(r')$ over an open subset $X'\subset X$ containing $\Sigma$; in other words, the family $\Pi_{|X'}$ satisfies local-global compatibility. By the uniqueness property this result can be glued to cover cases where $X$ is a more general rigid space. 
 
\subsubsection{Interpolation of local integrals} Suppose moreover that one wishes to interpolate some local zeta integrals $Z_{x}\colon \Pi_{|x}\to \Q_{p}(x)$, say defined in a Whittaker model of $\Pi_{|x}$. Then one can do so over $X'$ by the local-global compatibility and the results of \S\ref{sec: Lz}.  A typical situation where the interpolation of local integrals is useful is the construction of $p$-adic $L$-functions (e.g. as in \cite{pyzz}). There, one  starts with an integral representation of the form 
$$Z(\phi_{|x}) = L(\Pi_{x})\prod_{v}Z_{v}(\phi_{v|x}),$$ where now $\Pi$ is a family of (usually, $p$-adic Jacquet modules of) automorphic representations over $X$, $Z(\phi_{|x})$ is a global integral of $\phi\in \Pi$, and $Z_{v}(\phi_{v|x})$ are local integrals on $\Pi_{v|x}$ (here $\phi=\otimes_{v}\phi_{v}$ is a decomposition). After interpolating $x\mapsto Z(\phi_{|x})$ or its modification through a Hecke operator at $p$, one may either (i) choose $\phi$ carefully at all $v\nmid p$, so that $Z_{v}(\phi_{v|x})$ is explicitly computable to be a constant or some simple function on $X$, or (ii) interpolate the $Z_{v}(\phi_{v|x})$ for $v\nmid p$ along the lines of \S\ref{sec: Lz}, and define 
$$L_{p}(\Pi):= {Z(\phi)\over \prod_{v\nmid p} Z_{v}(\phi_{v})}$$ 
for any $\phi\in \Pi$.\footnote{As $\Pi$ will usually interpolate the tensor product of $\Pi_{|x}^{p}$ and of the finite-dimensional Jacquet modules (or ordinary parts) of $\Pi_{|x}^{p}$  for automorphic representations $\Pi_{|x}$, the  vector   $\phi_{v}$  for $v\vert p$ is restricted.  The product of suitably modified $p$-adic local integrals
 $\prod_{v\vert p } Z_{v}(\phi_{v|x})$ will be the interpolation factor at $x$.}
The approach (ii) is more flexible and less laborious, as it does not require ad hoc choices and explicit calculations. When $L_{p}(\Pi)$ is expected to enjoy some integrality or holomorphicity properties,   however,   additional work may be required to establish them.

\subsubsection{An example} For a  specific example of all of the above, see \cite{dd-univ}.
 
 \subsubsection{Related works} The  works \cite{Pau11, JNS},  as well the earlier \cite[Appendix B]{urban-adj},  implicitly deal with the correspondence defined here, studying  local-global compatibility for specific eigenvarieties, or Hida families,  by different methods. 
 
\subsection{Acknowledgements} The debt this work owes to [Mixed] is clear.  I would like to thank  David Helm for  guidance at an early stage and Olivier Fouquet,  Gil Moss, Robert Pollack, Eitan Sayag and  Eric Urban for useful conversations.  Finally, I am grateful to the referee for a careful reading.

During the preparation of the first version of this paper, the author was affiliated with the Laboratoire de Math\'{e}matiques d'Orsay and supported by a public grant  of the Fondation Math\'ematique Jacques Hadamard.

\section{Generic local Langlands correspondence }\label{sec: pi gen}
In this section, we set up the framework and recall various versions of the local Langlands correspondence, most importantly (a variant of) one due to Breuil--Schneider, which will later be shown to exhibit a reasonable behaviour in families.
\subsection{Weil--Deligne representations} 
Let $F$ be  a non-archimedean local field with residue field of cardinality $q$, and let $G_F$ (resp. $W_F$, $I_F$, $W_F'$)  be its absolute Galois group (resp. Weil group, inertia group, Weil--Deligne group). We denote by $\phi
\in W_F$ a geometric Frobenius and, if $r$ is a representation of $W_F$, by $r(i)$ the representation $r\otimes |\ |^i$, where $|\ |\colon W_F/I_F\to \Q^\times$ is the homomorphism characterised by $|\phi|=q^{-1}$.  Representations of these groups will always be understood to be continuous.
 
Recall that a representation of $W_F'$, called a Weil--Deligne representation, with values in a $\Q$-algebra $B$ 
`is'  a pair $(r, N)$ where $r\colon W_F\to B^\times $  is a  group homomorphism, trivial on some open subgroup of $W_{F}$, and $N\in B$ is a nilpotent element  satisfying $N r(\phi) =q r(\phi)N$. It is called Frobenius-semisimple if $r$ is semisimple.

 The classification of  Weil--Deligne representations with values in  $B=M_n(C)$, for an algebraically closed field $C$ of characteristic zero, is well known. The Frobenius-semisimple  indecomposable objects are of the form 
\beq\lb{speh}
{\rm Sp}({r,m})=r\oplus\ldots\oplus r(m-1)  
\eeq for some irreducible continuous $W_{F}$-representation $r$ and an integer $m\geq 1$, with $N$ mapping $r(i)$ isomorphically onto $r(i+1)$ for all                                                                                                                                                                                                                                                                                                                                                                                                                                                                                                                                                                                                                 $i<m-1$. 
 Any Frobenius-semisimple Weil--Deligne representation is a direct sum $r' = \bigoplus {\rm Sp}({r_{i},m_{i}})$ of indecomposable ones.  
 
 If $X $ is a scheme and  $V$ is a locally  free  $\OO_{X}$-module, a Weil--Deligne representation {on $V$} is one valued in $B={\rm End}_{\OO(X)}(V(X))$.  If $X=\Spec K$ is the spectrum of a field, a Weil--Deligne representation on $V$ induces a monodromy filtration on $V$  \cite[(1.5.5)]{ill94}.
If   $K$ is of characteristic zero,     a Frobenius-semisimple Weil--Deligne representation   is said to be  \emph{pure} 
 of weight $w\in \Z$ if  all  eigenvalues $\lambda$ of $\phi$ on ${\rm Gr}_{i}(V_{\bullet})$ are $q$-Weil numbers of weight $w + i$ (that is, $q^{n}\lambda$ is an algebraic integer for some $n$, and for all $\iota \colon \Q(\lm)\into \C$ we have $|\iota(\lambda)|=q^{(w+i)/2}$).

\subsubsection{Grothendieck's $\ell$-adic monodromy theorem in $p$-adic families} We recall  how to associate a Weil--Deligne representation to certain  families of representations of $\Gal(\baar{F}/F)$.  This result is not used in the rest of the paper but it is useful for applying the theory developed here. 
Let $p$ be a prime different from the residue characteristic of $F$. We say that $A$ is a \emph{$p$-adic ring} if either (i) $A$ is an affinoid algebra over $\Q_{p}$,  or (ii) there is a complete Noetherian local domain $A^{\circ}$ of characteristic zero and  residue characteristic $p$, with fraction field $\mathscr{K}$, such that $A$ is a subring of $\mathscr{K}$ containing $A^{\circ}[1/p]$.  If $A$ is a $p$-adic ring, we say that a representation $\rho$ of $\Gal(\baar{F}/F)$ on a finite locally free $A$-module $M $ is \emph{continuous } if, respectively, (i) $\rho $ is continuous for the topology induced from the topology of $M$, or (ii) there is a Galois-stable finite $A^{\circ}$-submodule $M^{\circ}$ such that $M^{\circ}\otimes_{A^{\circ}}A=M$ and the restriction of $\rho$ to $M^{\circ}$ is continuous for the topology inherited from $A^{\circ}$.
\begin{lemm}\label{l-mon}
Let $A$ be a $p$-adic ring, let $M$ be a locally free $A$-module,  and let $\rho\colon \Gal(\baar{F}/F)\to {\rm Aut}_{A}(M)$ be a representation. Fix  a nontrivial homomorphism   $t_{p}\colon  \Gal(\baar{F}/F) \to \Q_{p}$.
 If $\rho $ is continuous in the sense of the previous paragraph, it uniquely determines a Weil--Deligne representation 
$$r'=(r, N)=: {\rm WD}(\rho)$$
on $M$
 such that $\rho $ coincides with $g\mapsto \exp(t_{p}(g)N)$ on some open subgroup of $I_{F}$. 
\end{lemm}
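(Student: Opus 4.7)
The plan is to mimic the classical proof of Grothendieck's $\ell$-adic monodromy theorem (with $\ell$ the residue characteristic of $F$, different from $p$), taking advantage of the fact that the pro-$p$/pro-$\ell$ dichotomy driving the classical argument is preserved for our $p$-adic coefficient rings.

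Uniqueness is a formal consequence of the defining formula: if $N,N'\in\mathrm{End}_A(M)$ both satisfy $\rho(g)=\exp(t_p(g)N)=\exp(t_p(g)N')$ on an open $J\subset I_F$, then after possibly shrinking $J$ so that $\rho(J)$ lies in a region where $\log$ converges and $t_p(J)\neq\{0\}$, picking any $g_0\in J$ with $t_p(g_0)\neq 0$ gives $t_p(g_0)N=\log\rho(g_0)=t_p(g_0)N'$, hence $N=N'$; the Weil part is then forced to be $r(g):=\rho(g)\exp(-t_p(g)N)$ on $I_F$, extended to $W_F$ by the usual convention on a chosen Frobenius lift.

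For existence, I would first produce a $\Gal(\baar F/F)$-stable $A^\circ$-lattice $M^\circ\subset M$: in case (ii) this is precisely the definition of continuity, while in case (i) the compactness of $\rho(\Gal(\baar F/F))$ in $\mathrm{GL}(M)$ allows one, after changing basis, to take $A^\circ$ to be the unit ball of $A$ for a suitable Banach norm. The open normal subgroups $H_k:=1+\mathfrak m^k\mathrm{End}_{A^\circ}(M^\circ)$ (with $\mathfrak m=(p)$ in case (i) and $\mathfrak m$ the maximal ideal in case (ii)) filter $\mathrm{Aut}_{A^\circ}(M^\circ)$; each $H_k/H_{k+1}$ is an $\mathbf F_p$-module, so $H_1$ is pro-$p$ and admits convergent $p$-adic $\exp$ and $\log$. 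The crux of the proof is then to show that $\rho^\circ(P_F)$ is finite. Since $P_F$ is pro-$\ell$ we have $\rho^\circ(P_F)\cap H_1=\{1\}$, so $\rho^\circ(P_F)$ injects into the reduction $\mathrm{Aut}_{A^\circ/\mathfrak m}(M^\circ/\mathfrak m M^\circ)$; finiteness of the image is established by specialising at a sufficient supply of points of $\Spec A^\circ$ (the maximal ideals in case (i), whose residue fields are finite extensions of $\Q_p$; suitable one-dimensional specialisations or the fraction field $\mathscr K$ in case (ii)), where the classical Grothendieck theorem applies fibrewise. This specialisation step, which must handle both cases uniformly, is the main obstacle.

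Granted finiteness, $\ker(\rho^\circ|_{P_F})$ together with a deep enough congruence subgroup of tame inertia yields an open $J\subset I_F$ on which $\rho^\circ(J)\subset H_1$. Picking $\sigma\in J$ whose image in the tame quotient is a topological generator (and with $t_p(\sigma)\neq 0$), one sets
\[ N := t_p(\sigma)^{-1}\log\rho^\circ(\sigma)\in\mathrm{End}_{A^\circ}(M^\circ), \]
and verifies $\rho(g)=\exp(t_p(g)N)$ on $J$ by density of $t_p(J)$ in an open $\Z_p$-submodule of $\Q_p$ together with continuity of $\rho$. The relation $\phi\sigma\phi^{-1}=\sigma^q$ in tame inertia, translated via $\log$, becomes $\rho(\phi)N\rho(\phi)^{-1}=qN$, so every eigenvalue $\lambda$ of $N$ at every point of $\Spec A$ satisfies $\lambda=q\lambda$; since $q$ is a rational integer $>1$, this forces $\lambda=0$ and hence $N$ nilpotent. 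Setting $r(g):=\rho(g)\exp(-t_p(g)N)$ then yields a Weil representation trivial on $J$ and completes the construction of the Weil--Deligne pair $(r,N)$.
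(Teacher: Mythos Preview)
The paper's own proof is a pair of citations: \cite[Lemma~7.8.14]{BCh} for case~(i) and \cite[Proposition~4.1.6]{EH} for case~(ii). Your sketch follows the classical Grothendieck argument, which is exactly what those references carry out in the respective settings, so you are aligned with the paper's (outsourced) approach.

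One simplification is available: the step you flag as ``the main obstacle'' --- establishing finiteness of $\rho^\circ(P_F)$ by specialisation --- is an unnecessary detour. Once you have the congruence filtration, note that $H_1$ admits no nontrivial continuous image of a pro-$\ell$ group: since $\ell\in(A^\circ)^\times$, raising to the $\ell$-th power preserves the level in the filtration $\{H_k\}$, so $x^{\ell^n}\to 1$ in $H_1$ forces $x=1$. Now simply take $J:=(\rho^\circ)^{-1}(H_1)\cap I_F$, which is open in $I_F$ by continuity. Then $\rho^\circ(J\cap P_F)\subset H_1$ is the continuous image of a pro-$\ell$ group, hence trivial, so $\rho^\circ|_J$ already factors through the tame quotient; since its image lies in $H_1$, it further factors through the $\Z_p$-component. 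This bypasses any fibrewise appeal to the classical theorem and removes the need to control the residue field of $A^\circ$. (A minor caution: for $\exp$ and $\log$ to be mutually inverse homeomorphisms you may need $H_k$ for $k$ sufficiently large rather than $H_1$ itself, depending on how $p$ sits in $\mathfrak m$; this is harmless for the argument.)
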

\begin{proof} See   {\cite[Lemma 7.8.14]{BCh}} for case (i) and \cite[Proposition 4.1.6]{EH} for case (ii).
\end{proof}

\subsection{Theory of Bernstein--Zelevinsky}\lb{21}
We let $G=G_{n}=\GL_{n}(F)$, and denote by $K$ a field of characteristic zero.

Let $P\subset G$ be a parabolic subgroup with Levi $M=\prod_{i=1}^{t}G_{d_{i}}$. We recall the Langlands classification (due to Bernstein--Zelevinsky) and correspondence (proved by Harris--Taylor and Henniart), according to two different normalisations, the \emph{unitary}  and the \emph{rational}; we will  use a subscript $?\in\{{\rm u}, \emptyset\} $ (respectively) to distinguish them; our discussion is based on \cite[\S 3]{clozel}. There are correspondingly two notions of induction ${}_{?}{\rm I}_{P}^{G}$ from a parabolic subgroup $P\subset G$ with Levi $M\cong \prod G_{d_{i}}$:  one,  ${}_{\rm u}{\rm I}_{P}^{G} $, is  the unitarily normalised induction of representations $\sigma=\bigotimes\sigma_{i}$, defined if $K$ contains a (fixed) square root of $q$; the other, defined by  
$${\rm I}_{P}^{G}(\sigma_{1}\otimes\ldots \otimes \sigma_{t}):= {}_{\rm u}{\rm I}_{P}^{G}(\sigma_{1}|\ |^{1-d_{1}\over 2}\otimes \ldots\otimes \sigma_{t}|\ |^{1-d_{t}\over 2}) |\ |^{n-1\over 2},$$
over $K(\sqrt{q})$, descends to a functor ${\rm I}_{P}^{G}$ (rational induction)  on representations over $K$, see \cite[\S 3.4.3]{clozel}.

 A \emph{supercuspidal support} for $G$  over   $K$ is an equivalence class of pairs $(M, \sigma)$ where $M\subset G$ is a Levi subgroup and $\sigma $ is a supercuspidal representation of $M$ over $K$, up to the relation of $G$-conjugation of both.  Equivalently, it is a multiset $\{\sigma_{1}, \ldots , \sigma_{m}\}$ where each $\sigma_{i}$ is a supercuspidal representation of $G_{d_{i}}$ for some partition $n=\sum d_{i}$.
 
 Let $\pi$ be a smooth admissible irreducible  representation of $G$ over $K$. For each choice (rational or, provided $K=K(\sqrt{q})$, unitary) of normalisation, the set of supercuspidal representations $\sigma=\sigma_{1}\otimes \ldots\otimes \sigma_{r}$ of Levi subgroups $M=\prod\GL_{d_{i}}$ such that   $\pi$ is a Jordan--H\"older constituent of the parabolic induction  ${}_{\rm u}{\rm Ind}_{P}^{G} \sigma$ (resp. ${\rm I}_{P}^{G}\sigma$), for some parabolic $P\subset G$ with Levi $M$, is an orbit under the action of $G$ conjugacy. That is, it is a supercuspidal support, called \emph{the unitary (resp. rational) supercuspidal support} of $\pi$. 
 If the smooth representation $\pi$ is only assumed to be of finite length , we say that it has the  supercuspidal support $\sg$ if each of the Jordan--H\"older constituents has supercuspidal support $\sg$. 

\subsubsection{Theory of Bernstein--Zelevinsky} A \emph{segment} for $G_{d}$ over $K$ is a  set of representations of $G_{d}$ of the form $\Delta=\Delta({\sigma, m})=\{\sigma, \ldots, \sigma(m-1)\}$, with $\sigma $ a supercuspidal representation of $G_{d}$ over $K$ (so $n=md$). The generalised Steinberg representation $$\pi_{?}(\Delta):={}_{?}{\rm St}_{\Delta}$$
is the unique irreducible generic\footnote{That is, admitting a unique Whittaker model: see more precisely the definition of being \emph{of Whittaker type} (one generalisation of the  notion of being \emph{generic} applicable beyond irreducible representations over a field) in Definition \ref{coW def} below.} representation with unitary (resp. rational) supercuspidal support $\Delta$. It is the unique irreducible quotient of ${}_{?}{\rm I}_{P}^{G}(\sigma\otimes \ldots\otimes \sigma(m-1))$ and the unique irreducible subrepresentation of ${}_{?}{\rm I}_{P}^{G}(\sigma(m-1)\otimes \ldots\otimes \sigma)$.  The essentially square-integrable irreducible representations of $G$ are exactly those of the form $\pi_{?}(\Delta)$ for some segment $\Delta$.

Let  ${\bf s}=\{\Delta_{1}, \ldots, \Delta_{t}\}$ be a \emph{multisegment} (that is a multiset of segments), and let $\sigma$ be its supercuspidal support, that is the  multiset of supercuspidal representations $\sigma_{i}(j)$ of $G_{d_{i}}$ occurring in the $\Delta_{i}$. We say that $\Delta_{i}=\Delta(\sigma_{i}, m_{i})$ \emph{precedes} $\Delta_{j}=\Delta(\sigma_{j}, m_{j})$ if none of them contains the other and $\sigma_{j}=\sigma_{i}(t+1) $ for some $0\leq t\leq m_{i}-1$.
Suppose that the  $\Delta_{i}$ in ${\bf s}$ are ordered so that for $i<j$, $\Delta_{i}$ does not precede $\Delta_{j}$. 
Then 
$$\pi'_{?, {\rm gen}}({\bf s}):= {}_{?}{\rm I}_{P}^{G}(\pi_{?}(\Delta_{1}), \ldots, \pi_{?}(\Delta_{r})),$$ the  induction with respect to any  suitable parabolic $P$, is independent of the particular  choice of ordering  (among those satisfying the condition mentioned) and of $P$ up to isomorphism;  it has a unique irreducible  subrepresentation, which is the unique generic irreducible representation with supercuspidal support $\sigma$,  and a unique irreducible quotient denoted $\pi_{?}({\bf s})$. 

Suppose that the $\Delta_{i}$ in ${\bf s}$ are ordered so that for all $i<j$, $\Delta_{j}$ does not precede $\Delta_{i}$.  
Then the representation
\beq\lb{rdnp}
\pi_{{?}, {\rm gen}}({\bf s}):={}_{?}{\rm I}_{P}^{G}(\pi_{?}(\Delta_{1}), \ldots, \pi_{?}(\Delta_{r}))\eeq
  is independent of the particular  choice of ordering   (among those satisfying the condition mentioned) up to isomorphism;  it has a unique irreducible quotient, which is the generic representation with  supercuspidal support $\sigma$, and a unique irreducible subrepresentation, which is the representation $\pi_{\rm ?}({\bf s})$ of the previous paragraph.

\subsection{Langlands correspondences}
We continue with the notation of \S \ref{21}. The (unitary or rational) local Langlands correspondences $\pi_{\rm u}$, $\pi$ (denoted collectively as $\pi_{?}$) are   bijections between equivalence classes of  Frobenius-semisimple $n$-dimensional Weil--Deligne  representations over $\C$ and  of  smooth admissible irreducible representations of $G_{n}$ over $\C$. (The rational normalisation is also known as \emph{geometric} or \emph{Tate} normalisation.) They are related by 
$$\pi(r')=\pi_{\rm u}(r')|\ |^{n-1\over 2},$$
and if $r'^{*}$ denotes the dual representation to $r'$ then 
\beq\label{duality}
\pi_{\rm u}(r'^{*})\cong \pi_{\rm u}(r')^{\vee}\eeq (smooth dual).
 Each of $\pi_{\rm u}$, $\pi$  restricts to a bijection between irreducible Weil--Deligne representations (thus of the form $r'=(r, 0)$ where $r$ is an irreducible $W_{F}$-representation) and supercuspidal $G_{n}$-representations, and the whole correspondence for $G_{n}$ is deduced from such restricted version for all $m\leq n$ as follows.   Let $$r' = \bigoplus {\rm Sp}({r_{i},m_{i}}).$$
  Let $\sigma_{{\rm ?}, i}:=\pi_{\rm ?}((r_{i},0))$, and consider the 
multisegments 
$${\bf s}_{{\rm ?}, r'}:=\{\Delta({\sigma_{{\rm ?}, i}, m_{i}})\}$$
Then    $$\pi_{\rm ?}(r')=\pi({\bf s}_{ ?, r'}).$$ 
The rational  correspondence $\pi$ is compatible with automorphisms of $\C$.  

The correspondence $r'\mapsto \pi_{\rm gen}'({\bf s}_{r'})$ is the one defined by Breuil--Schneider in \cite[pp. 162--164]{BS} and recalled in \cite[\S 4.2]{EH}. We will be more concerned with the correspondence 
\begin{align}\label{pigen}
\pi_{\rm gen}\colon r'\mapsto \pi_{\rm gen}({\bf s}_{r'}),\end{align}
which we call the \emph{generic} (rational) local Langlands correspondence, related to $\pi_{\rm gen}'$ by 
\beq\lb{dualgen}
\pi_{\rm gen}(r')^{\vee}=\pi_{\rm gen}'({r'}^{*}).
\eeq
 By \cite[Lemma 4.2]{BS} and the argument at the end of \cite[\S 4.2]{EH}, it descends to a map still denoted by $\pi_{\rm gen}$ from Weil--Deligne representations over $K$ to $G$-representations defined over $K$, for any\footnote{In \emph{locc. citt.}, the coefficient field is supposed to contain $\Q_{p}$, but this is not necessary for any of their statements or proofs, until one wants to pass from Weil--Deligne representations to Galois representations.}
 characteristic-zero field $K$.  (The same is true of the rational Langlands correspondence $r'\mapsto \pi(r')$; the corresponding statements are true for the unitary normalisation provided $K$ contains $\sqrt{q}$.)

Finally, the correspondence $\pi_{?}$ induces a bijection  (\emph{semisimple} Langlands correspondence)
\beq \label{piss}
\pi_{?, {\rm ss}}\colon r\mapsto  (M, \sigma) 
\eeq
between semisimple $n$-dimensional representations of  $W_{F}$ over $\C$ and supercuspidal supports for $G_{n}$ over $C$. The correspondence $\pi_{\rm ss}$ is equivariant  with respect to  automorphisms of $C$ and descends to a map among  objects defined over any characteristic zero field $K$.

To summarise the relation between the correspondences we use: if $r'=(r,N)$ is a Frobenius-semisimple Weil--Deligne representation over $K$, then $$\pi(r')\subset \pi_{\rm gen}(r'),$$  with equality if and only if $\pi(r')$ is generic, and the  supercuspidal support of both is $\pi_{\rm ss}(r)$. 

\subsubsection{Galois representations} Let $p$ be a prime different from the residue characteristic of $F$, $K$ a topological field extension of $\Q_p$, $\rho \colon G_F\to \GL_n(K)$ a continuous representation admitting a Weil--Deligne representation $r'=(r, N)$. Then  for $*={\rm gen}, {\rm ss}, {\emptyset}$, we define
\begin{align}\label{pirho}
\pi_*(\rho):=\pi_*(r').
\end{align}

\section{Langlands correspondence for the Bernstein varieties}\label{sec: LL Ber}
We introduce the rational Bernstein variety, a scheme over $\Q$ whose ring of regular functions over $\C$ is the usual Bernstein centre of $G=G_{n}=\GL_{n}(F)$. Then we identify it with the coarse moduli space of semisimple representations of $W_{F}$. Finally, we prove an analogous result for Weil--Deligne representations with locally constant monodromy, in terms of an extension of the Bernstein variety.
\subsection{The Bernstein variety} 
Let $C$ be a field of characteristic zero. The Bernstein centre $\frak Z_{n,C}$ of $G$ is the centre (endomorphism ring of the identity functor) of the category of smooth $G$-modules with coefficients in $C$.  Bernstein and Deligne \cite{BD} gave an explicit description of it when $C$ is algebraically closed (in their original work $C=\C$ is assumed, but their arguments remain valid without this assumption): $\frak Z_{n, C}$ is the ring of regular functions of the `Bernstein variety' ${\frak X}_{n,C} $, an infinite  disjoint union of  affine varieties over $C$ which we now turn to describing.

 An \emph{inertial class} of supercuspidal supports over $C$  is an equivalence  class  $[M, \frak s]$ (which we sometimes denote just by $[\frak s]$) of pairs where $M$ is a Levi  and $\frak s$ is a  
 $\widehat{T}_{M}(C)$-orbit 
  of   supercuspidal representations of $M$ over $C$, up to $G$-conjugation of both. Here 
  $\widehat{T}_{M}(C)$
  is the space of unramified characters of $M$ valued in $C^{\times}$, which is the set of $C$-points of a split torus $\widehat{T}_{M}$ described as follows. Let $M_{0}\subset M$ be the intersection of the kernels of all unramified characters of $M$ (it    is also the subgroup generated by all compact subgroups of $M$); then $M/M_{0}$ is a free abelian group of finite rank and  $$\widehat{T}_{M}=\Spec C[M/M_{0}].$$
 
Let $H_{\frak s}$ be the (finite) group of unramified characters of $M$ stabilising any (hence every) $\sigma $ in $\frak s$.    As  $\frak s$ is a principal homogeneous space for $\widehat{T}_{M}/{ H_{\frak s}}$, it also acquires the structure of an  affine algebraic variety. 
 Let $W(M):=N_{G}(M)/M$ and $W(M, \frak s):={\rm Stab}_{W(M)}(\frak s)$, then 
$$\frak X_{[ \frak s]}:={\frak s}/W(M,\frak s)$$
is an algebraic variety whose points are in bijection with supercuspidal supports in the inertial class $[M, \frak s]$.  The Bernstein `variety'  is
$$\frak X_{n, C}=\coprod \frak X_{[\frak s]}$$
where $[M, \frak s]$ runs over inertial classes. 
The points of  $\frak X_{n , C}$ are in bijection with   supercuspidal supports over $C$.

\begin{lemm}  Let $C$ be any algebraically closed field of characteristic zero. Every inertial class of $G$ over $C$ contains an element defined over a finite extension of $\Q$ contained in $C$. 
\end{lemm}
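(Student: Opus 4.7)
The plan is to reduce to a single general linear group, normalise the central character by an unramified twist, and then invoke the Bushnell--Kutzko realisation of supercuspidals as compact inductions. Writing $M=\prod_{i=1}^{t} G_{d_i}$ and $\sigma=\sigma_1\otimes\cdots\otimes\sigma_t$ with each $\sigma_i$ supercuspidal, one has $\widehat{T}_M=\prod_i\widehat{T}_{G_{d_i}}$, and the inertial orbit of $(M,\sigma)$ factors as a product of the inertial orbits of the $\sigma_i$. So it suffices to show: given any supercuspidal $\sigma$ of $G_d=\GL_d(F)$ over $C$, there is an unramified character $\chi$ of $G_d$ such that $\sigma\otimes\chi$ has a model over a finite extension of $\Q$ in $C$.

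The central character $\omega_\sigma\colon F^\times\to C^\times$ is smooth, so its restriction to $\OO_F^\times$ has finite image, contained in $\mu_\infty(C)\subset\baar{\Q}^\times$. Fixing a uniformiser $\varpi$ and using that $C$ is algebraically closed, choose an unramified character $\chi$ of $G_d$ with $\chi(\varpi I)=\omega_\sigma(\varpi I)^{-1}$. Replacing $\sigma$ by $\sigma\otimes\chi$ keeps us in the inertial class and makes the new central character of finite order with values in $\baar{\Q}^\times$. Now apply the Bushnell--Kutzko classification: write $\sigma\cong c\text{-}\mathrm{Ind}_J^{G_d}(\lambda)$ for some open $J\subset G_d$ that is compact modulo the centre $Z$, with $\lambda$ an irreducible finite-dimensional smooth representation of $J$. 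After the normalisation, $\lambda\vert_{Z\cap J}$ acts (by Schur's lemma) through a character of finite order taking values in $\baar{\Q}^\times$, and the kernel of this character is an open subgroup $Z_0\subset Z\cap J$ of finite index. The quotient $J/Z_0$ is then compact totally disconnected, so the smooth finite-dimensional representation of $J/Z_0$ induced by $\lambda$ factors through a finite quotient. A finite-dimensional representation of a finite group over $C$, combined with a prescribed algebraic central character, descends to a finite extension $K\subset C$ of $\Q$. Since compact induction commutes with extension of scalars, $\sigma\otimes\chi$ itself descends to $K$.

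The main obstacle is the deep but classical input from Bushnell--Kutzko on types for $\GL_d(F)$; once granted, the remainder is bookkeeping about central characters. An alternative, less self-contained, route would use the local Langlands correspondence (already invoked in the paper and compatible with automorphisms of $C$): every irreducible $n$-dimensional Weil representation has inertial restriction factoring through a finite quotient of $I_F$, hence is defined over $\baar{\Q}$ up to an unramified twist (i.e.\ up to the choice of a Frobenius eigenvalue, which one normalises to be algebraic), and then one transfers back via $\pi_{\mathrm{ss}}$.
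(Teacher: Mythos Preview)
Your proof is correct and follows essentially the same route as the paper's own (very terse) argument: reduce to a single $\GL_d$, invoke Bushnell--Kutzko to realise the supercuspidal as a compact induction from a finite-dimensional representation of a compact-mod-centre subgroup, twist by an unramified character so that the inducing representation factors through a finite group, and then descend to a number field. You have simply made explicit the steps the paper compresses into the phrase ``after possibly twisting by a character, $\sigma_0$ has a model over a finite extension of $\Q$''.
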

\begin{proof}  By the theory of types  \cite{BK}, every irreducible supercuspidal representation $\sg$ of $G_{d}$ is compactly induced from a finite-dimensional representation $\sg_{0}$ of a compact-modulo-centre subgroup. After possibly twisting by a character, $\sg_{0}$ has a  model ove a finite extension of $\Q$, hence so does $\sg$.
\end{proof}

\begin{prop} There is a scheme $\frak X_{n}$, locally of finite type over $\Q$,  such that for any field $K$ of characteristic zero:
\begin{itemize}
\item  $\frak X_{n}(K)$ is   in bijection with $\Gal(\baar{K}/K)$-orbits of  supercuspidal supports over $\baar K$. 
\item if $C$ is algebraically closed, the scheme $ \frak X_{n,C}:= \frak X_{n}\times_{\Spec \Q}\Spec C$  is identified with the  previously defined one;
\item the centre $\frak Z_{n,K}$  of the category of smooth representations of $G_{n}$ over $K$-vector spaces is naturally identified with $\OO(\frak X_{n, K})$.
\end{itemize}
\end{prop}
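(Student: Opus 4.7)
The strategy is Galois descent from the Bernstein variety over $\baar{\Q}$ (which is constructed exactly as over $\C$, and whose base change to $\C$ recovers $\frak{X}_{n,\C}$ by the fact that every supercuspidal representation over $\C$ is defined over $\baar{\Q}$, via the lemma). First I would note that $\Gal(\baar{\Q}/\Q)$ acts on the set of isomorphism classes of supercuspidal representations of a Levi $M$ over $\baar{\Q}$, by pulling back via field automorphisms, and that this action preserves the equivalence relation defining inertial classes. The preceding lemma shows that each inertial class has an open stabiliser in $\Gal(\baar{\Q}/\Q)$, so the set of inertial classes is a discrete Galois set.

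Second, I would fix a set $\mathcal{O}$ of representatives for the $\Gal(\baar{\Q}/\Q)$-orbits of inertial classes. For each $[\frak{s}]\in \mathcal{O}$, let $K_{[\frak{s}]}\subset \baar{\Q}$ be the fixed field of the stabiliser of $[\frak{s}]$. The key observation is that the torus $\widehat{T}_{M}=\Spec \Q[M/M_{0}]$ is \emph{already} defined over $\Q$, and the finite subgroup $H_{\frak{s}}\subset \widehat{T}_{M}$ together with the finite group $W(M,\frak{s})$ and its action on $\frak{s}\subset \widehat{T}_{M}(\baar{\Q})\cdot \sigma_{0}$ (for any fixed $\sigma_{0}\in \frak{s}$) are defined over $K_{[\frak{s}]}$. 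Consequently $\frak{X}_{[\frak{s}]}=\frak{s}/W(M,\frak{s})$ acquires a canonical model over $K_{[\frak{s}]}$ by Galois descent of affine varieties. Set
\[
\frak{X}_{n}:=\coprod_{[\frak{s}]\in \mathcal{O}} \frak{X}_{[\frak{s}]},
\]
viewed as a $\Q$-scheme via the structure morphisms $\Spec K_{[\frak{s}]}\to \Spec \Q$. This is locally of finite type over $\Q$ since each $\frak{X}_{[\frak{s}]}$ is.

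Third, for the base-change property I would use the decomposition, for any algebraically closed field $C$ of characteristic zero,
\[
\frak{X}_{n}\times_{\Spec \Q}\Spec C \;=\; \coprod_{[\frak{s}]\in \mathcal{O}} \coprod_{\iota\colon K_{[\frak{s}]}\hookrightarrow C} \frak{X}_{[\frak{s}]}^{\iota},
\]
and the inner coproduct is exactly the $\Gal(\baar{\Q}/\Q)$-orbit of $[\frak{s}]$ inside the set of inertial classes over $C$; ranging over $\mathcal{O}$ recovers the full Bernstein variety $\frak{X}_{n,C}$ as defined above. The identification on points (closed points of $\frak{X}_{n}$ with residue field embeddable in $K$ correspond to Galois orbits of supercuspidal supports over $\baar{K}$) is then immediate from the construction.

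Finally, for the centre, I would argue that the proof of Bernstein--Deligne goes through over any algebraically closed field of characteristic zero, giving $\frak{Z}_{n,\baar{K}}=\OO(\frak{X}_{n,\baar{K}})$. Then Galois descent applies on both sides: the category of smooth $K[G]$-modules is equivalent to the category of $\Gal(\baar{K}/K)$-equivariant smooth $\baar{K}[G]$-modules, so $\frak{Z}_{n,K}=\frak{Z}_{n,\baar{K}}^{\Gal(\baar{K}/K)}$, which matches $\OO(\frak{X}_{n,\baar{K}})^{\Gal(\baar{K}/K)}=\OO(\frak{X}_{n,K})$. The main obstacle I foresee is checking the algebraicity and rationality in Step~2/3: that the abstract Galois action on $\frak{X}_{[\frak{s}]}$ (coming from its interpretation as a moduli space of supercuspidal supports) coincides with the action on the underlying variety through its $\widehat{T}_{M}$-structure. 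Once this is verified using the theory of types (as in the preceding lemma), the rest is formal.
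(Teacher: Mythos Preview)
Your approach is essentially the same as the paper's: Galois descent from $\frak{X}_{n,\baar{\Q}}$, using the preceding lemma to ensure finiteness of orbits. Two minor differences are worth noting. First, the paper descends each full Galois orbit of connected components directly to $\Q$ (as an affine $\baar{\Q}$-scheme with $\Gal(\baar{\Q}/\Q)$-action), rather than descending a chosen component to $K_{[\frak s]}$ and then viewing it as a $\Q$-scheme; this sidesteps your ``obstacle'' entirely, since one only needs the moduli-theoretic Galois action on $\frak{X}_{n,\baar{\Q}}$ and never its compatibility with any explicit $\widehat{T}_{M}$-structure. Second, for the centre the paper does not invoke a category equivalence between smooth $K[G]$-modules and Galois-equivariant smooth $\baar{K}[G]$-modules. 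Instead it builds the map $\frak{Z}_{n,K}\to \frak{Z}_{n,\baar{K}}^{\Gal(\baar{K}/K)}$ directly: given $f\in \frak{Z}_{n,K}$ and a $\baar{K}[G]$-module $V$, regard $V$ as a $K[G]$-module and set $f'_{V}:=f_{V}$ (naturality with respect to scalar multiplication forces $\baar{K}$-linearity, and naturality with respect to $\psi_{\tau}$ forces Galois-invariance); the inverse sends $f'$ to $(f_{V}:=f'_{V\otimes \baar{K}})_{V}$. Your category-equivalence route is morally correct but would require extra care, both for descent along the infinite extension $\baar{K}/K$ and for identifying the centre of the equivariant category with the Galois invariants of $\frak{Z}_{n,\baar{K}}$.
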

\begin{proof} 
By the previous lemma,
the  orbits of the  obvious    Galois action on $\frak X_{n, \baar{\Q}}$  are finite; moreover if $x\in \frak X_{n, \baar\Q}$ then the Galois orbit of the connected component of $x$ is the union of the connected components of the Galois-conjugates of $x$. It follows that each Galois orbit  of connected components $\frak X^{(i)}_{n, \baar \Q}$ is an affine scheme of finite type over $\baar \Q$. Then the Galois action descends each $\frak X^{(i)}_{n, \baar \Q}$  to an algebraic variety  $\frak X_{n}^{(i)}$ over $\Q$ and the whole $\frak X_{n, \baar\Q}$ to a scheme $\frak X_{n}=\coprod_{i}\frak X_{n}^{(i)}$ over $\Q$.  By the previous lemma, if $C$ is any algebraically closed field then $\frak X_{n, \baar{\Q}}\ts_{\Spec\baar{\Q}} \Spec C =\frak X_{n, C}$.

Finally,  we give the isomorphism
 $$\frak Z_{n, K} \to \OO( \frak X_{n, K})= \frak Z_{n, \baar{K}}^{\Gal(\baar{K}/K)}.$$ 
 Let $f\in\frak Z_{n, K}$, then $f$ corresponds to a collection $f_{V}\in \End(V)$ for each smooth representation $V$ of $G$ over $K$, such that 
\beq\lb{centre prop}
f_{W}\psi=\psi f_{V}\eeq for each $\psi\in \Hom_{K[G]}(V, W)$. The Galois action on $\frak Z_{n, \baar{K}} $
 is given by $(f^{\tau})_{V}:= \psi_{\tau}^{-1}f_{V^{\tau}}$, where $V^{\tau}= V\ot_{\baar{K}, \tau}\baar{K}$ and $\psi_{\tau}\colon \End(V)\to \End(V^{\tau})$ is induced by the $K$-isomorphism $V\to V^{\tau}$.  By the forgetful functor from representations over $\baar{K}$ to representations over $\baar{K}$, we obtain a map $\frak Z_{n, K} \to \frak Z_{n, \baar{K}}$. By the property \eqref{centre prop}, its image consists of $\Gal(\baar{K}/K)$-invariant elements. Conversely if $f'\in  \frak Z_{n, \baar{K}}$ is Galois invariant, the elements $f_{V}:=f'_{V\ot_{K}\baar{K}} \in \End(V\ot\baar{K})^{\Gal(\baar{K}/K)}=  \End(V)$ form an element of $\frak Z_{n, K}$.  
\end{proof}

\subsubsection{Inertial local Langlands correspondence} Let $C$ be an algebraically closed field of characteristic zero. Define an \emph{inertial type} for $F$ over $C$ to be a representation of $I_{F}$ over $C$ which extends to $W_{F}$. Then, the \emph{inertial}  local Langlands correspondence $\pi_{I}$   is a bijection between inertial classes over $C$ on the $G$-side and inertial types over $C$ on the Galois side: $\pi_{I}^{-1}$ sends the inertial class $[M, \frak s]$ of a supercuspidal support $(M, \sigma)$ to the restriction of $\pi_{\rm ss}^{-1}((M, \sigma))$ to $I_{F}$.

\subsection{Semisimple local Langlands correspondence in families}
\label{sec: W-rep}
The following result is essentially due to Helm \cite{helm-curtis}. It morally  shows that the interpolation of the correspondence  $\pi_{\rm ss} $ of \eqref{piss} in families is trivial in the sense that there is a common moduli space parametrising the isomorphism classes of objects on both sides.  

We  denote by the same name a scheme and its functor of points.
\begin{theo}\label{coarse}
Let $K$ be a field of characteristic zero. The Bernstein variety $\frak X_{n, K}$ is a coarse (pro)-moduli scheme for the functor $\Phi_{n, K}$ which associates to any reduced   Noetherian $K$-scheme $X $
the set of isomorphism classes of semisimple representations of $W_{F}$ on locally  free $\OO_{X}$-modules of rank $n$, in such a way that the map 
$$\Phi_{n, K}\to \frak X_{n, K}$$
 induces $\pi_{\rm ss}^{-1}$ on geometric points.
\end{theo}

The proof will occupy the rest of this subsection. 

It suffices to show the result for $K$ algebraically closed, since by the compatibility with $\pi_{\rm ss}$ and the rationality properties of $\pi_{\rm ss}$ the map $\Phi_{n, \baar K}\to \frak X_{n,\baar K}$ descends to $K$. We thus assume $K$ is algebraically closed for the rest of this subsection and rename $K=C$ for a more suggestive notation.

Let $R_{n}^{\square}$ be the coordinate ring of the variety of semisimple matrices in $\GL_{n}(C)$.  
Let $R_{n}\into R_{n}^{\square}$ 
be the coordinate ring of the scheme parametrising conjugacy classes of semisimple automorphisms of an $n$-dimensional vector space; we have $R_{n}=C[a_{1}, \ldots , a_{n-1},a_{n}^{\pm 1}]$ where the $a_{i}$ correspond to the coefficients of the characteristic polynomial of an automorphism; we also have $\Spec R_{n}\cong ({\bf G}_{m}^{n})/{S_{n}}$, where the isomorphism $C[a_{1}, \ldots , a_{n-1},a_{n}^{\pm 1}]\cong C[x_{1}^{\pm 1}, \ldots, x_{n}^{\pm 1}]^{S_{n}}$ is given by $\prod_{i=1}^{n}(X-x_{i})=X^{n}+\sum_{i=1}^{n}a_{i}X^{i}$.

Let us start by making explicit Bernstein--Deligne's description of $\frak X_{n, C}=\coprod \frak X_{[ \frak s]}$. 
 We may fix  a supercuspidal support $(M, \sigma^{\circ})$ in the class $[M,\frak s]$ such that 
\begin{align}
\label{decsig}
\sigma^{\circ}=\otimes_{i=1}^{s}\sigma_{i}^{\otimes m_{i}}
\end{align}
where the $\sigma_{i}$ are pairwise  inertially inequivalent representations of $\GL_{d_{i}}(F)$ and $\sum m_{i}d_{i}=n$. Then $\frak X_{[ \frak s]}$ is explicitly described as follows. For $x\in {\bf G}_{m}$ denote by $\chi_{x,d}$ the unramified character of $\GL_{d}(F)$ given by $g\mapsto x^{v_{F}(\det(g))}$. Let $f_{i}\in \N$ be such that the group of $f_{i}^{\rm th}$ roots of unity 
\begin{align}\label{mufi}
\mu_{f_{i}}\subset {\bf G}_{m}\end{align} is the stabiliser of $\sigma_{i}$ under the action $x.\sigma_{i}= \sigma_{i} \chi_{x,d_{i}}$. Then, since $C $ is algebraically closed
\begin{align}\label{unif bernstein}
\frak X_{[ \frak s]}\cong \prod_{i=1}^{s} {\bf G}_{m}^{m_{i}}/{S_{m_{i}}}
\end{align}
with the point which is the orbit of $((x_{1,1}, \ldots, x_{1,m_{1}}), \ldots, (x_{s,1},\ldots, x_{s, m_{s}}))$ corresponding to
 \begin{align}\label{sigx}\otimes_{i=1}^{s} \otimes_{j=1}^{m_{i}}\sigma_{i} \chi_{x_{i,j}}^{f_{i}}.
\end{align}

 Let $$\frak X_{[ \frak s]}^{\square}= \prod_{i=1}^{s}\Spec R_{m_{i}}^{\square} \
\to \frak X_{[ \frak s]}$$
 be given by the natural quotients $\Spec R_{m_{i}}^{\square}\to \Spec R_{m_{i}}=({\bf G}_{m}^{m_{i}})^{S_{i}}$ in each factor. We will construct a  $W_{F}$-representation over $\frak X_{[ \frak s]}^{\square}$, which will be universal in some sense to be specified. We start by studying families of $W_{F}$-representations.

\begin{lemm}\label{CHT}  Let $C$ be an algebraically closed field of characteristic zero, let $A$ be a $C$-algebra and let $M$ be a continuous  $A[{W}_{F}]$-module, finite and locally free as an $A$-module. Then there is an $A[{W}_{F}]$-module decomposition 
$$M=\bigoplus_{[\tau]} {\rm Ind}_{{W}_{\tau}}^{{W}_{F}} M_{\tau},$$
whose summands are uniquely determined. Here
 $[\tau] $ runs over the ${W}_{F}$-conjugacy classes   of irreducible ${I}_{F}$-representations occurring in $M$,   ${W}_{\tau}\subset {W}_{F}$ is the stabiliser of a chosen representative $\tau\in [\tau]$, and 
\begin{align}\label{Mtau}
M_{\tau}=\Hom_{{I}_{F}}(\tau, M)\otimes \tau.
\end{align}
\end{lemm}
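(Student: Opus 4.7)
The plan is to reduce the statement to classical Clifford theory for a finite group acting on a module over a commutative ring. First, by the continuity hypothesis, the action of $I_F$ on $M$ factors through a finite discrete quotient: there is an open normal subgroup $U \subset I_F$ acting trivially, and I set $\bar{I} := I_F/U$, a finite group.

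Second, since $\mathrm{char}(C) = 0$ and $\bar{I}$ is finite, the group algebra $C[\bar{I}]$ is semisimple. For each isomorphism class $\tau$ of irreducible $C[\bar{I}]$-representation, let $e_\tau \in C[\bar{I}]$ be the associated central idempotent. Viewing $e_\tau \in A[\bar{I}]$, these idempotents are orthogonal with sum $1$ and commute with the $I_F$-action on $M$, so setting $M_\tau := e_\tau M$ gives a canonical decomposition
$$M = \bigoplus_\tau M_\tau$$
of $A[I_F]$-modules, the sum ranging over irreducible $I_F$-representations occurring in $M$. The standard identification—valid over any $C$-algebra since $\tau$ is already defined over $C$—then yields $M_\tau \cong \mathrm{Hom}_{I_F}(\tau, M) \otimes_C \tau$ as $A[I_F]$-modules.

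Third, conjugation by $w \in W_F$ permutes the irreducible $I_F$-representations via $\tau \mapsto \tau^w$, and hence permutes the corresponding idempotents via $w e_\tau w^{-1} = e_{\tau^w}$. Consequently, the $w$-action on $M$ sends $M_\tau$ isomorphically to $M_{\tau^w}$. Grouping summands by $W_F$-orbits $[\tau]$ with stabilizers $W_\tau$, each orbital sum $\bigoplus_{\tau' \in [\tau]} M_{\tau'}$ is $W_F$-stable, and transporting $M_\tau$ across coset representatives of $W_\tau$ in $W_F$ identifies it with $\mathrm{Ind}_{W_\tau}^{W_F} M_\tau$. This yields the stated decomposition. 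Uniqueness of the summands follows because each $M_\tau$ is intrinsically the image of the idempotent $e_\tau$ acting on $M$.

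The only point that requires a moment's care is the descent of Maschke's theorem from $C$-coefficients to $A$-coefficients, but this is automatic: the idempotents $e_\tau \in C[\bar{I}]$ are defined over the field $C$ itself, so they act on any $C$-algebra module and produce the isotypic decomposition without any flatness hypothesis on $A$. The remaining manipulations (isotypic identification, induced-representation description of orbital sums) are formal consequences of this.
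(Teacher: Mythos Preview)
Your proof is correct and follows essentially the same route as the paper's: reduce to a finite quotient $\bar{I}$ of $I_F$, use semisimplicity of $C[\bar{I}]$ to obtain the isotypic decomposition $M=\bigoplus_\tau M_\tau$ with $M_\tau=\Hom_{I_F}(\tau,M)\otimes\tau$, then observe that $W_F$ permutes the summands according to its conjugation action on the $\tau$'s, giving the induced-representation description. Your version is slightly more explicit about the role of the central idempotents $e_\tau$ (and hence about uniqueness), but the argument is the same.
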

\begin{proof} 
By the finiteness of $M$, there is a finite index subgroup ${I}_{F}^{\circ}\subset {I}_{F}$ acting trivially on $M$; let $\baar{{I}}_{F}:={I}_{F}/{I}_{F}^{\circ}$. We have  $A[\baar{{I}}_{F}]\cong C[\baar{{I}}_{F}]\otimes A$ and the finite-group algebra $C[\baar{{I}}_{F}]\cong \oplus_{\tau} \End(\tau)$ is semisimple. This induces a decomposition $M=\oplus_{\tau}M_{\tau}$ as $A[{I}_{F}]$-modules, where $\tau$ runs over equivalence classes of irreducible $\baar{{I}}_{F}$-representations and $M_{\tau}=\Hom_{{I}_{F}}(\tau, M)\otimes \tau$. 

By definition,  the finite cyclic group ${W}_{F}/{W}_{\tau}$ acts on $\bigoplus_{\tau\in [\tau]}M_{\tau}$ by permuting the summands. The description of $M_{\tau }$ follows.
\end{proof}

\subsubsection{Universal inertially-framed representation} 
Fix a representative $\tau$ for every ${{W}_{F}}$-conjugacy class $[\tau]$ of irreducible ${{I}_{F}}$-representations. An \emph{inertial framing}\footnote{Analogous to the notion of  \emph{pseudo-framing} in \cite{helm-curtis}.} of an $A$-representation $r$ of ${I}_{F}$ or ${W}_{F}$ is a choice, for every $[\tau]$, of a basis of $\Hom_{{I}_{F}}(\tau,r)$.

Let  $[M, \frak s]$ be the inertial class fixed above and let   $\nu=\pi_{I}^{-1}([M, \frak s])$. We shall construct an inertially-framed semisimple $W_{F}$-representation $r_{\nu}$ over $\frak X_{[ \frak s]}^{\square}$ which is universal among those such representation whose restriction to $I_{F}$ is $\nu$. 

  Decompose $\nu=\bigoplus_{\tau}\Hom_{{I}_{F}}(\tau, \nu)\otimes \tau$, and let $n_{[\tau]}:=\dim \Hom(\tau, \nu)$, which is independent of $\tau\in [\tau]$. If we 
fix  a supercuspidal support $(M, \sigma^{\circ})$ as in \eqref{decsig},
we obtain a representation $r^{\circ}=\pi_{\rm ss}^{-1}((M, \sigma^{\circ}))$ of $W_{F}$ described as follows. 

Let $\wtil\tau_{i}^{F}$ be the irreducible $d_{i}$-dimensional representation of $W_{F}$ associated with  $\sigma_{i}$ by the local Langlands correspondence for $\GL_{d_{i}}(F)$. By the compatibility of local Langlands with twisting, $\mu_{f_{i}}$ \eqref{mufi} is also the stabiliser of $\wtil \tau_{i}^{F}$ under the action of ${\bf G}_{m}$ by unramified twists. It follows that we can write $\wtil\tau_{i}^{F}={\rm Ind}_{W_{i}}^{W_{F}}\wtil \tau_{i}$ where $W_{i}$ is the Weil group of the unramified extension of $F$ of degree $f_{i}$ and $\wtil \tau_{i}$ is a representation of $W_{i}$ whose restriction to $I_{F}$ is irreducible. Then 
$$r^{\circ}= \bigoplus_{i}(\wtil\tau_{i}^{F})^{m_{i}}=\bigoplus_{i}{\rm Ind}_{W_{i}}^{W_{F}}(C^{m_{i}}\otimes\wtil\tau_{i})$$
where $C^{m_{i}}$ is  a trivial $W_{i}$-module. 

Define the representation 
\begin{align}\label{rnu}
r_{\nu}:=\bigoplus_{i}{\rm Ind}_{W_{i}}^{W_{F}}((R_{m_{i}}^{\square})^{m_{i}}\otimes \wtil\tau_{i})
\end{align}
over $\frak X_{[ \frak s]}^{\square}=\Spec \bigotimes_{i} R_{m_{i}}^{\square}$,
where each $(R_{m_{i}}^{\square})^{m_{i}}$ is an unramified $W_{i}$-module on which a geometric Frobenius $\phi_{i}$ acts by the universal semisimple $m_{i}\times m_{i}$-matrix over $R_{m_{i}}^{\square}$. It is clear that $r_{\nu}$ is universal among those $W_{F}$-representations $r$ such that $r|_{I_{F}} \cong \nu$.

We can now establish the coarse moduli property of $\frak X_{n, C}$.  Recall that this amounts to showing that: 
\begin{itemize}
\item there is a natural transformation of functors $\Phi_{n,C}\to \frak X_{n, C}$, that is: for every isomorphism class   of rank-$n$ families of representations of $W_{F}$ over  a  scheme $X$ in ${\rm Noeth}_{X}$, there is a map $f\colon X\to \frak X_{n, C}$, functorially in $X$;
\item for each algebraically closed field $\Omega \supset C$, the above transformation  induces a bijection $\Phi_{n}(\Omega)\to \frak X_{n, C}(\Omega)$  (and moreover we assert that this bijection is $\pi_{\rm ss}^{-1}$).
\end{itemize}
First, for each inertial type $\nu$, let $\Phi_{\nu}\subset \Phi_{n}$ be the subfunctor of those classes of representations $r$ such that $r|_{I_{F}}$ is everywhere equivalent to $\nu$. Suppose that $r$ is a rank-$n$ family of representations of $W_{F}$ over a reduced Noetherian scheme $X$. As the restriction $r|_{I_{F}}$ is locally constant on $X$ and $X$ has finitely many connected components, $r$ defines an element of a finite union $\coprod_{\nu}\Phi_{\nu}(X)\subset \Phi(X)$. Cover $X$ with connected Zariski-open subsets $\iota_{i}\colon X_{i}\into X$ such that over each  $X_{i}$ the representation $\iota_{i}^{*}r|_{I_{F}}$ is isomorphic to a fixed inertial type $\nu$ and moreover it admits an inertial framing. By the universal property of $r_{\nu}$  there is a unique  map $f_{i}\colon X_{i}\to \frak X_{n}^{\square}$ such that $f_{i}^{*}r_{\nu}\cong \iota_{i}^{*}r$. The compositions of $f_{i}$ with the projection $ \frak X_{n}^{\square}\to  \frak X_{n, C}$ are independent of the choice of inertial framing and glue to the desired  map $f\colon X\to \frak X_{n, C}$. 

Finally, we show that when $X=\Spec \Omega$, the image of the supercuspidal support $x=f(X) \in\frak X_{n, C}(\Omega)$ under $\pi_{\rm ss}$ is $r$. In fact, suppose that $x$ is represented by the orbit of 
 $$((x_{1,1}, \ldots, x_{1,m_{1}}), \ldots , (x_{s,1},\ldots, x_{s, m_{s}}))\in \prod_{i=1}^{s}{\bf G}_{m}^{m_{i}},$$ and keep the notation introduced before. Then for any choice of inertial framing   the $i^{\rm th}$ factor of $f^{*}r_{\nu}$ is the tensor product of $\wtil \tau_{i}^{F} $ and an $m_{i}$-dimensional unramified representation over $\Omega $ on which  $\phi=\phi_{i}^{f_{i}}$ acts on by a matrix with eigenvalues $x_{i, 1}^{f_{i}}, \ldots, x_{i, m_{i}}^{f_{i}}$. By the compatibility of $\pi_{\rm ss}$ with twisting, this is the $W_{F}$-representation corresponding under $\pi_{\rm ss}$ to the supercuspidal support $x$ as described in \eqref{sigx}.

This completes the proof of Theorem \ref{coarse}.

\subsection{Monodromy and the extended Bernstein variety}\label{sec: ext}
We start by discussing monodromy operators in families, mostly borrowing from \cite[\S 7.8.1]{BCh}.

\subsubsection{Partitions}  A partition of length $\ell$ of an integer $m$ is an ordered sequence of integers $t=(t_{j})$ such that $n=\sum_{j=1}^{\ell} t_{j} $. Two partitions $t$, $t'$ are said to be equivalent if there is a  $\sigma\in S_{\ell}$ such that $t_{\sigma(j)}=t_{j}$ for all $j$; the subgroup of self-equivalences of a partition $t$ is denoted by $W_{t}\subset S_{\ell}$. 
 A \emph{multipartition} is a sequence $t=(t_{i})$ of   partitions $m_{i}=\sum_{j=1}^{\ell_{i}} t_{i,j}$; two multipartitions $t=(t_{i})$, $t'=(t_{i}')$ are said to be equivalent if $t_{i}$ is equivalent to $t_{i}'$ for all $i$; the group of self-equivalences of $t=(t_{i})$ is  $W_{t}=\prod_{i}W_{t_{i}}$. If $t$ is a  (multi)partition, we denote by $[t]$ its equivalence class; if $[t] $ is class of (multi)partitions, a decreasing   representative $t$ is one satisfying  $t_{(i),j}\geq t_{(i),j+1}$ for all ($i$ and) $j$.

\subsubsection{Nilpotent operators} Let $K $ be a field and let $N$ be a  nilpotent operator on a vector space $V$ of dimension $m$. Then $N$ can be decomposed into Jordan blocks,  $N\sim J_{t_{1}}\oplus \ldots \oplus J_{t_{\ell}}$ for  a partition $t_{1}+\ldots t_{\ell}=m$ whose class  $$[t]=[t](N)$$ is uniquely determined.
Let now $N'$ be another nilpotent operator on a $K$-vector space $V'$ of the same dimension $m$, with associated partition $[t']$. We write 
\begin{align}\label{<mon}
N\preceq N', \quad \textrm{resp. } N\sim N' 
\end{align}
if $[t]\preceq [t']$ (resp. $[t]=[t']$) in the sense that, if $t$ and $t'$ are decreasing representatives of $[t]$ and $[t']$ then for all $i$, 
$$\sum_{j=1}^{i} t_{j} \leq \sum_{j=1}^{i} t_{j}';$$
equivalently, if ${\rm rk} N^{i}\leq {\rm rk} N'^{i}$ (resp.  ${\rm rk} N^{i} =  {\rm rk} N'^{i}$) for all $i\leq m$.  The association $N\mapsto [t]$, hence the relations $\preceq$, $\sim$, are compatible with field extensions.

\subsubsection{Semicontinuity of monodromy} Let $K$ be a characteristic zero field, $X$ a reduced Noetherian scheme over $K$, and  let $(r,N)$ be a Weil--Deligne representation on a locally free $\OO_{X}$-module $M$ of rank $n$. As in the proof of Lemma \ref{CHT}, there exists a finite set of  irreducible representations $\tau$ of $I_{F}$ over $\baar{K}$ such that $M\otimes \baar{K}=\bigoplus \tau\otimes \Hom_{I_{F}}(\tau, M)$, and  $\Hom_{I_{F}}(\tau, M)$ is locally free. The monodromy operator $N$ leaves invariant the summands and, as it commutes with $I_{F}$, induces nilpotent operators $N_{\tau}$ on each of $\Hom_{I_{F}}(\tau, M)$. 

Let $x$, $y$ be points of $X$ and let $N_{x}=(N_{\tau, x})_{\tau}$, $N_{y}=(N_{\tau, y})_{\tau}$ be the associated sequences  of associated nilpotent operators defined above. We say that $N_{x}\preceq_{I_{F}} N_{y} $  if for all $\tau$, $N_{x,\tau}  \preceq N_{y, \tau}$, and that $N_{x}\sim_{I_{F}}N_{y}$ if for all $\tau$, $N_{\tau,x}\sim N_{\tau, y}$.  We say that $N$ is \emph{locally constant} on $X$ if $N_{x}\sim_{I_{F}} N_{y}$ whenever $x$ and $y$ belong to  the same connected component of $X$.

\begin{prop}\label{nowhere dense} The map $${\rm rk}\, N\colon x\mapsto {\rm rk}\, N_{x}:=({\rm rk}\, N^{i}_{x,\tau})_{\tau, i} $$ is lower semicontinuous in the sense that, for each $x\in X$, there is an open neighbourhood $U\subset X$ of $x$ such that $N_{x} \preceq_{I_{F}} N_{y}$ for all $y\in U$. Moreover the set $D$ of discontinuity points of ${\rm rk} \, N$ is closed and nowhere dense in $X$, and it contains no $x\in X$ such that $(r_{x}, N_{x})$ is pure.

The restriction of $(r,N)$ to the open dense $X-D$ has locally constant monodromy.
\end{prop}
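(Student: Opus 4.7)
The plan is to prove the three assertions (lower semicontinuity, closedness and nowhere-denseness of $D$, and exclusion of pure points) in sequence; the final claim that monodromy is locally constant on $X - D$ will be immediate.

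Using the $I_F$-isotypic decomposition supplied by Lemma \ref{CHT} (valid over a suitable algebraic base change, which is harmless for semicontinuity of ranks), each $\Hom_{I_F}(\tau, M)$ is a locally free $\OO_X$-module, and the iterate $N_\tau^i$ is an endomorphism whose pointwise rank is lower semicontinuous (the loci $\{x : \mathrm{rk}(N_\tau^i)_x \geq k\}$ are open, defined by non-vanishing of $k \times k$ minors). Combined with the rank-inequality characterization of the dominance order $\preceq$ noted in the paper, this yields $N_x \preceq_{I_F} N_y$ for $y$ in a Zariski-open neighborhood of $x$. For the topology of $D$, consider each irreducible component $X_\alpha$ of $X$ (finitely many, by Noetherianness) with generic point $\eta_\alpha$; the generic rank sequence $\mathbf{r}_\alpha := \mathrm{rk}\, N_{\eta_\alpha}$ is attained on the open dense subset $\bigcap_{\tau, i}\{x \in X_\alpha : \mathrm{rk}\, N_{\tau, x}^i \geq \mathbf{r}_{\alpha, \tau, i}\}$, whose complement $D \cap X_\alpha$ is closed and nowhere dense. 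Hence $D$ is closed and nowhere dense in $X$, and since $\mathrm{rk}\, N$ is continuous with finitely many values on $X - D$, it is locally constant there.

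The heart of the proof is the exclusion of pure points. Suppose $(r_x, N_x)$ is pure and let $X_\alpha$ be an irreducible component containing $x$, with generic point $\eta$. We have $N_x \preceq_{I_F} N_\eta$ by lower semicontinuity; it remains to show $N_\eta \preceq_{I_F} N_x$. The key input is that, on each $\tau$-isotypic, the partition type of any nilpotent operator $N$ satisfying Deligne's relation $N\phi = q\phi N$ for a given semisimple action of $\phi$ is bounded above in $\preceq$ by a ``maximum type'' $[t]^{\max}(\phi)$ determined combinatorially by the multiplicities of the $\phi^{f_\tau}$-eigenvalues along the chains $\lambda, \lambda q^{-f_\tau}, \lambda q^{-2f_\tau}, \ldots$ through a greedy pairing of eigenspaces. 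Deligne's classification of Frobenius-semisimple pure Weil--Deligne representations via $\mathrm{Sp}$-summands shows that purity forces $N$ to attain this maximum, so $[t](N_x) = [t]^{\max}(\phi_x)$.

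The remaining step is the upper semicontinuity of $[t]^{\max}$ as a function of $x$: specializing $\eta \leadsto x$, the eigenvalues of the semisimple $\phi^{f_\tau}$ can only coalesce and the multiplicities of their eigenspaces can only increase, so chains lengthen and the greedy pairing yields a $\preceq$-larger (or equal) Jordan type at $x$. Thus $[t]^{\max}(\phi_\eta) \preceq [t]^{\max}(\phi_x) = [t](N_x)$, and combined with $[t](N_\eta) \preceq [t]^{\max}(\phi_\eta)$ this gives the required $N_\eta \preceq_{I_F} N_x$. The main delicate point throughout is this upper semicontinuity of $[t]^{\max}$, which reduces via the combinatorial description of the greedy pairing to the classical upper semicontinuity of $\dim \ker(\phi_x - \lambda)$ in families of semisimple endomorphisms, applied at each eigenvalue in the base change to an algebraic closure.
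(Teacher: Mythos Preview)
Your treatment of lower semicontinuity and of the closedness and nowhere-denseness of $D$ is essentially identical to the paper's proof, which argues via upper semicontinuity of the ranks of the coherent sheaves $\Ker N_\tau^i$ and then restricts to irreducible components.

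For the exclusion of pure points, the paper simply invokes \cite[Theorem 3.1(2)]{saha}, whereas you outline a direct argument via the ``maximal monodromy type'' $[t]^{\max}(\phi)$. Your strategy is sound and amounts to a sketch of the relevant part of Saha's argument (or a close variant): purity forces $N_x$ to realise $[t]^{\max}(\phi_x)$, and $[t]^{\max}$ is upper semicontinuous under specialisation, so $[t](N_\eta) \preceq [t]^{\max}(\phi_\eta) \preceq [t]^{\max}(\phi_x) = [t](N_x)$, which combined with lower semicontinuity gives $N_\eta \sim_{I_F} N_x$. This is a genuinely self-contained alternative to citing Saha, and that is its merit.

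However, your justification of the upper semicontinuity of $[t]^{\max}$ is incomplete. The claimed reduction to ``upper semicontinuity of $\dim\ker(\phi_x - \lambda)$'' is not adequate on its own: knowing that individual eigenspace multiplicities can only increase under specialisation does not by itself imply that the combinatorially-defined $[t]^{\max}$ increases in the dominance order. What is actually needed is the explicit formula
\[
\mathrm{rk}\,N_{\max}^s \;=\; \sum_j \min(d_j, d_{j+1}, \ldots, d_{j+s}),
\]
where $(d_j)$ are the multiplicities along a $q^{f_\tau}$-chain of eigenvalues of $\phi^{f_\tau}$, together with the elementary superadditivity $\min_i(a_i + b_i) \geq \min_i a_i + \min_i b_i$, which handles both chain-merging and multiplicity-increase under specialisation. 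Once this is made precise your argument goes through, but as written the ``greedy pairing'' description and the reduction step are asserted rather than proved. Your claim that purity forces $N$ to attain $[t]^{\max}$ also deserves a line of argument: it rests on the fact that in a pure representation of weight $w$ the $\phi$-eigenvalue of weight $a$ lies exactly in $\mathrm{Gr}_{a-w}$ of the monodromy filtration, which pins down the Jordan type of $N$ from the eigenvalue data alone.
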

(We say that ${\rm rk} N$ is discontinuous at $x\in X$ if for every open neighbourhood $U\subset X$ of $x$ there is $y\in U$ such that ${\rm rk }\, N_{x}\nsim_{I_{F}} {\rm rk }\,N_{y}$. )
\begin{proof}
The lower semicontinuity of $ {\rm rk}\, N$ follows from the upper semicontinuity of the ranks of the coherent $\OO_{X}$-modules $M_{ \tau, i}:=\Ker N_{\tau}^{i}$ for all $i$, $\tau$.

  For the second assertion, we first note that the set $D_{\tau, i}$ of discontinuity points of the function ${\rm r}:={\rm rk}\, M_{\tau, i}$ is the union, for all irreducible components $X_{j}\subset X$, of the sets $D_{\tau, i;j}$ of discontinuity points of ${\rm r}_{j}:={\rm r}|_{X_{j}}$. If $r_{j}$ is the rank of $M_{\tau,i}$ at the generic point of $X_{i}$, each of $D_{\tau, i; j}=\{{\rm r}_{i}(x)\leq r_{i}-1\}$ is  closed in $X_{j}$ hence in $X$. Therefore $D=\bigcup_{i, \tau; j} D_{i, \tau;j}$ is closed in $X$ and, as it contains no generic points of irreducible components, it is nowhere dense. By \cite[Theorem 3.1 (2)]{saha},\footnote{In this statement, the local field $F$ is assumed to be of characteristic zero, however this is not used in the proof.}
 $D$ contains no $x\in X$ such that  $(r_{x}, N_{x})$ is pure.
  Finally, the restriction of ${\rm rk}\, N$ to $X-D$ is continuous, equivalently the monodromy is  locally constant. 
\end{proof}

The partial order on monodromy operators has a Langlands correspondent.
\begin{prop}[Zelevisnky] \lb{ss'}
Let $(r, N) $ and $(r, N')$ be $n$-dimensional Frobenius-semisimple representations of $W'_{F}$ over a characteristic-zero field $K$, with isomorphic restriction to $W_{F}$.  Suppose that $N'\preceq N$. Then 
$$\Hom(\pi_{\rm gen}((r , N')) , \pi_{\rm gen}((r, N)))
$$
is $1$-dimensional over $K$, generated by a surjection 
which is an isomorphism if and only if $N'\sim N$.
\end{prop}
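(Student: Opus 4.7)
The plan is to exploit two shared structural features of $V := \pi_{\rm gen}((r, N'))$ and $W := \pi_{\rm gen}((r, N))$: both are of Whittaker type, and both have the same unique irreducible quotient, namely the unique generic irreducible representation $\pi^{\rm gen}_{\sigma}$ with the common supercuspidal support $\sigma$ determined by $r$; the Whittaker-type hypothesis forces $\pi^{\rm gen}_{\sigma}$ to appear with multiplicity exactly one in each of $V$ and $W$. One may first reduce to the case $K = \baar{K}$: the correspondence $\pi_{\rm gen}$ is compatible with field extensions, and $\dim_{K} \Hom(V, W)$, together with surjectivity and the isomorphism property of a given map, is preserved by Galois descent for finite-length admissible smooth $G$-modules in characteristic zero.

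The upper bound $\dim_{K} \Hom(V, W) \leq 1$ is the core of the argument and does not use the hypothesis $N' \preceq N$. Let $p \colon W \twoheadrightarrow \pi^{\rm gen}_{\sigma}$ and $q \colon V \twoheadrightarrow \pi^{\rm gen}_{\sigma}$ denote the canonical projections. For any $\varphi \in \Hom(V, W)$, the composition $p \circ \varphi$ maps $V$ into the irreducible $\pi^{\rm gen}_{\sigma}$, and so equals $\lambda(\varphi) \cdot q$ for a unique $\lambda(\varphi) \in K$. The resulting linear map $\varphi \mapsto \lambda(\varphi)$ is injective: if $\lambda(\varphi) = 0$ and $\varphi \ne 0$, then the nonzero quotient $\varphi(V)$ of $V$ --- having simple cosocle $\pi^{\rm gen}_{\sigma}$ --- must admit $\pi^{\rm gen}_{\sigma}$ as a Jordan--H\"older constituent; but $\ker p$ contains no such constituent, since the unique copy of $\pi^{\rm gen}_{\sigma}$ in $W$ is absorbed by $p$, a contradiction.

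For the lower bound, the plan is to build a surjection $V \twoheadrightarrow W$ from the hypothesis $N' \preceq N$, translated via Bernstein--Zelevinsky into a partial order ${\bf s}(N') \leq {\bf s}(N)$ between multisegments. In the simplest situation, ${\bf s}(N')$ is obtained from ${\bf s}(N) = \{\Delta_{1}, \ldots, \Delta_{t}\}$ by refining each $\Delta_{i}$ as a consecutive union $\Delta_{i} = \Delta_{i,1} \sqcup \ldots \sqcup \Delta_{i, r_{i}}$ (with $\Delta_{i,j}$ preceding $\Delta_{i, j+1}$); the characterisation of $\pi(\Delta_{i}) = {\rm St}_{\Delta_{i}}$ as the unique irreducible generic quotient of ${\rm I}(\pi(\Delta_{i,1}), \ldots, \pi(\Delta_{i, r_{i}}))$, together with the exactness and transitivity of parabolic induction, then assembles into the desired $V \twoheadrightarrow W$. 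The general case should reduce to this one by iterating elementary Zelevinsky merges of linked segments. Combining the two bounds, $\Hom(V, W)$ is one-dimensional and generated by a surjection; this surjection is an isomorphism iff $V \cong W$, which by injectivity of $\pi_{\rm gen}$ on Weil--Deligne representations amounts to $(r, N') \cong (r, N)$, i.e.\ $N' \sim N$. The principal obstacle I expect is this final combinatorial construction, namely verifying that the reduction to elementary merges is compatible with the ordering conventions defining $\pi_{\rm gen}({\bf s})$ and produces an honest surjection at each step.
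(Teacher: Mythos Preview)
Your argument is correct. The paper's own proof is a one-line citation of \cite[Theorem 4.5.1, Propositions 4.3.6, 4.3.7]{EH} (noting that their $\pi({\bf s})$ is our $\pi'_{\rm gen}({\bf s})$, so the maps run in the opposite direction), so you have effectively unpacked what lies behind those references.

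Your upper bound $\dim_{K}\Hom(V,W)\leq 1$ is exactly the kind of cosocle/multiplicity-one argument that underlies \cite[Proposition 4.3.6]{EH}; it is clean and, as you observe, does not need $N'\preceq N$. For the lower bound, the existence of a surjection $\pi_{\rm gen}({\bf s}(N'))\twoheadrightarrow \pi_{\rm gen}({\bf s}(N))$ whenever $N'\preceq N$ is precisely Zelevinsky's result as recorded in \cite[Theorem 4.5.1]{EH}: one reduces to an elementary operation replacing two linked segments $\Delta,\Delta'$ by $\Delta\cup\Delta'$ and $\Delta\cap\Delta'$, and produces the surjection at that step from the description of generalised Steinberg representations as quotients of induced representations, together with exactness and transitivity of ${\rm I}_{P}^{G}$. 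Your flagged ``principal obstacle'' (compatibility of the elementary steps with the ordering convention in the definition of $\pi_{\rm gen}$) is real but routine, and is exactly what is checked in Zelevinsky's original argument; note only that your ``simplest situation'' of pure refinement of each $\Delta_{i}$ does not by itself generate the full Zelevinsky order, so the reduction to elementary merges is genuinely needed. Finally, your isomorphism criterion via injectivity of $\pi_{\rm gen}$ is fine: $\pi_{\rm gen}(r')$ determines $\pi(r')$ as its unique irreducible subrepresentation, and $\pi$ is a bijection.
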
 
\begin{proof}
This follows from the definition of $\pi_{\rm gen}$ and \cite[Theorem 4.5.1, Propositions 4.3.6, 4.3.7]{EH}. Note that if ${\bf s}$ is a multisegment, in \emph{loc. cit.} the  notation  $\pi({\bf s}) $ corresponds to our $\pi'_{\rm gen}({\bf s})$ from \S\ref{21}, so that their morphisms go in the opposite direction to ours.
\end{proof}

\subsubsection{The extended Bernstein variety} We now define an extension of the Bernstein variety whose points are in bijection with Galois orbits of multisegments for $G$.
 It is naturally a coarse moduli space for Weil--Deligne representations with locally constant monodromy.

  Fix an inertial   class $[\frak s]$, which we can write in the form
  \begin{align}\label{fraks d}
  \frak s=\otimes_{i=1}^{s} \frak s_{i}^{\otimes m_{i}}
  \end{align}
where the $\frak s_{i}$ are pairwise inequivalent  inertial classes of supercuspidal representations of $G_{d_{i}}(F)$.

 Let $[t]=([t_{i}])$ be a multipartition of $(m_{i})$ and, after picking any $t\in [t]$ (whose choice won't matter),  let 
$$ \frak X_{[\frak s], [t]}:=\big(\prod_{i=1}^{s}  \frak  X_{[\frak s_{i}]}^{\ell_{i}}\big)/W_{t},$$
a scheme  over $\baar\Q$ with a closed immersion 
\begin{align*}
f_{[t]}\colon  \frak X_{[\frak s],[t]} &\to \frak X_{[ \frak s]}\\
(\sigma_{1, 1}, \ldots, \sigma_{1, \ell_{1}}, \ldots, \sigma_{s,1}\ldots, \sigma_{s, \ell_{s}})
&\mapsto \otimes_{i=1}^{s}\otimes_{j=1}^{\ell_{i}}  \Delta(\sigma_{i,j}, t_{i,j})   
\end{align*}
 where $\Delta(\sigma, t)  := \otimes_{m=0}^{t-1}  \sigma(m)$   
When each  $t_{i}=1+\ldots+1 $, we have $\ell_{i}=m_{i}$ and  the above map recovers the isomorphism \eqref{unif bernstein}.

The  action of $\tau\in \Gal(\baar\Q/\Q)$  on $\frak X_{n, \baar\Q}$ sends $\frak X_{\frak s, [t]}$ to $\frak X_{\frak s^{\tau},[t]}$ and the (finite) union $\coprod_{\tau} \frak X_{\frak s^{\tau}, [t]}$ descends to a $\Q$-subscheme of $\frak X_{n}$. Then 
$$\frak X_{n, \baar\Q}' := \coprod_{[\frak s], [t]} \frak X_{[\frak s], [t]}$$
descends to a scheme
$$\frak X_{n}'$$
over $\Q$: the \emph{extended Bernstein variety}.

\begin{theo}\label{coarseWD}The extended Bernstein variety $\frak X'_{n, K}$ is a coarse (pro)-moduli scheme for the functor $\Phi_{n, K}'$ which associates to any reduced Noetherian $K$-scheme $X$
 the set of isomorphism classes of Frobenius-semisimple Weil--Deligne  representations  on locally  free $\OO_{X}$-modules of rank $n$ with locally constant monodromy, in such a way that the map $\Phi_{n,K}'\to\frak X_{n,K}'$ induces the local Langlands correspondence $\pi^{-1}$ on  geometric points.
This map fits into a commutative diagram
\begin{equation*}\label{quotient}
\xymatrix{
\Phi_{n, K}'\ar[r]\ar[d] &\frak X_{n, K}'\ar[d]\\
\Phi_{n, K}\ar[r] & \frak X_{n, K}
}
\end{equation*}
where the bottom horizontal arrow is given by Theorem \ref{coarse}.
\end{theo}
\begin{proof} We may reduce to the case $K=C$ is algebraically closed and  to testing the coarse moduli property on connected schemes $X_{/C}$. But it is easy to see that if  $(r, N)$ is a Weil--Deligne representation with constant monodromy over $X$ and the map $f\colon X\to \frak X_{n, C}$ induced by $r$ via Theorem \ref{coarse} has image   in $\frak X_{[\frak s]}$, then in fact $f$ has values in $\frak X_{[\frak s], [t]}$ for $[t]=[t](N)$, so that we may uniquely lift $f$ to $f'\colon X\to \frak X_{[\frak s], [t]}\subset \frak X'_{n, C}$. Moreover the isomorphism classes of Weil--Deligne representations over $C$ are   in bijection with points of $\frak X'_{n, C}$: explicitly, the point of $\frak X_{[\frak s], [t]}\subset \frak X_{n, C}'$ corresponding  to the supercuspidal support $\otimes_{i=1}^{s}\otimes_{j=1}^{\ell_{i}} \Delta(\sigma_{i}, t_{i,j})$ corresponds to the Weil--Deligne representation 
\begin{align}\label{speh}
\oplus_{i=1}^{s}\oplus_{j=1}^{\ell_{i}}{\rm Sp}(\pi_{\rm ss}^{-1}(\sigma_{i}), t_{i,j}).
\end{align}
\end{proof}

Let $\frak X_{[\frak s], [t]}^{\square}:=\prod_{i=1}^{s}\frak X_{[\frak s_{i}]}^{\square}\to \frak X_{[\frak s],[t]}$. There is a `universal' Weil--Deligne representation $(r, N)_{[\frak s], [t]}$ over $  \frak X_{[\frak s], [t]}^{\square}$ given by 
\begin{align}\label{univ wd}
(r, N)_{[\frak s], [t]}:= \bigoplus_{i} \bigoplus_{j}
  {\rm Ind}_{W_{i}'}^{W_{F}'}( (R_{\ell_{i}}^{\square})^{\ell_{i}}\otimes{\rm Sp}( \wtil{\tau}_{i}, t_{i,j}))
\end{align}
with the notation of \eqref{rnu}. Its fibre over any point of $\frak X_{\frak s, [t]}^{\square}$ over the supercuspidal support  $\otimes_{i=1}^{s}\otimes_{j=1}^{\ell_{i}}\Delta(\sigma_{i}, t_{i,j})$ is isomorphic to \eqref{speh}.

\section{Langlands correspondence in families}\label{sec: LLF}

\subsection{The Bernstein--Zelevinsky top derivative}\label{s4.1}
 Throughout this subsection,  $K$ denotes   a field of characteristic zero and  $X$ a locally Noetherian $K$-scheme unless otherwise noted.
 \paragraph{Smooth families of representations} Let $G$ be a locally compact and totally disconnected group. An $\OO_{X}[G]$-module $V$ is said to be \emph{smooth} if every  $v\in V$ is stabilised by some compact open subgroup of $G$, and \emph{admissible} if for every compact open subgroup $U\subset G$, the $\OO_{X}$-module $V^{U}$ is coherent (that is, finitely generated locally on $X$). We say that $V$ is \emph{finitely generated} as $\OO_{X}[G]$-module if for  every $x\in X$ there exist a neighbourhood $X'$, a positive integer $n$, and an $\OO_{X'}[G]$-surjection  $\OO_{X'}[G]^{\oplus n}\to V_{|X'}$. 
\begin{lemm}\lb{is coh} Let $V$, $W$ be $\OO_{X}[G]$-modules. Assume that $V$ is finitely generated and $W$ is admissible. Then $\Hom_{\OO_{X}[G]}(V, W)$ is a coherent $\OO_{X}$-module.
\end{lemm}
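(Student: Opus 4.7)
The plan is to show, locally on $X$, that $\Hom_{\OO_X[G]}(V,W)$ sits as an $\OO_X$-submodule of a coherent $\OO_X$-module; coherence will then follow from the fact that $X$ is locally Noetherian, so that submodules of coherent $\OO_X$-modules are themselves coherent.

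The statement being local on $X$, I would first reduce to the case $X = \Spec A$ with $A$ Noetherian, over which the finite-generation hypothesis furnishes an $A[G]$-linear surjection $\pi \colon A[G]^{\oplus n} \twoheadrightarrow V$. Set $v_i := \pi(1_i)\in V$ for the images of the standard generators. Since $V$ is a smooth $G$-representation (as is implicit in the setting of smooth families), each $v_i$ is stabilised by some compact open subgroup $U_i \subset G$; the intersection $U := \bigcap_{i=1}^{n} U_i$ is again a compact open subgroup and fixes every $v_i$.

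As the $v_i$ generate $V$ over $A[G]$, evaluation at them defines an $A$-linear injection
$$\Hom_{A[G]}(V, W)\,\hookrightarrow\, W^{\oplus n},\qquad \phi\mapsto (\phi(v_1),\ldots,\phi(v_n));$$
by $G$-equivariance of $\phi$ and $U$-invariance of the $v_i$, the image in fact lies in $(W^U)^{\oplus n}$. The admissibility of $W$ now guarantees that $W^U$ is a finitely generated, hence Noetherian, $A$-module, so $(W^U)^{\oplus n}$ is Noetherian; its $A$-submodule $\Hom_{A[G]}(V,W)$ is therefore finitely generated, as required.

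No substantial obstacle arises: the argument simply combines smoothness of $V$ (to find a single compact open subgroup stabilising all the generators) with admissibility of $W$ (to ensure $W^U$ is coherent). The only step to be slightly careful with is precisely the passage from the trivial bound $W^{\oplus n}$ down to the coherent bound $(W^U)^{\oplus n}$, which is what allows admissibility to be brought to bear.
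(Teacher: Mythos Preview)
Your proof is correct and follows essentially the same approach as the paper: reduce to a local situation, pick finitely many $\OO_{X}[G]$-generators $v_{i}$ of $V$, choose a compact open $U$ fixing all of them, and use the evaluation injection $\Hom_{\OO_{X}[G]}(V,W)\hookrightarrow (W^{U})^{I}$ together with admissibility of $W$. The paper's proof is just a more compressed version of exactly this argument.
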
 
\begin{proof}  Over any sufficiently small open  $X'\subset X$, $V$ is generated by finitely many elements $v_{i}$, $i\in I$. Let $U\subset G$ be an open compact subgroup such that $v_{i}\in V^{U}$ for all $i\in I$. Then $f\mapsto (f(v_{i})_{i\in I}$ defines an injection $\Hom_{\OO_{X'}[G]}(V_{|X'}, W_{|X'})\into (W/_{|X'}^{U})^{I}$. 
\end{proof}

\paragraph{Notation} For the rest of this subsection, $F$ denotes a global field, and $S$ a finite set of non-archimedean places of $F$.  

Let $G_{v}=G_{n,v}:=\GL_{n}(F_{v})$. We denote by  $N_{n,v}\subset P_{n,v}\subset G_{n,v}$ respectively the upper-triangular unipotent subgroup and the mirabolic subgroup consisting of matrices with last row $e_{n}^{\rm t}$.  We denote
$N_{n,S}:=\prod_{v\in S} N_{n,v}\subset P_{n,S}:= \prod_{v\in S}P_{n,v}\subset   G_{n,S} :=\prod_{v\in S} G_{n,v}$. All subscripts $n$ will be omitted when clear from context.

\paragraph{Additive characters} We recall some formalism introduced in \cite{pyzz}. Let  $\bmu_{\Q}$ be the ind-scheme of roots of unity over $\Q$. We define the space of additive characters of $F_{v}$ of level ${0}$ to be  
\begin{align}\label{psiuniv} \Psi_{v}:=\Hom(F_{v}/\OO_{F,v}, \bmu_{\Q}) -  \Hom(F_{v}/\vpi_{v}^{-1}\OO_{F,v}, \bmu_{\Q}), 
\end{align}
where $\vpi_{v}\in F_{v}$  is a uniformiser and we regard $\Hom(F_{v}/\OO_{F,v}, \bmu_{\Q})$ as a profinite group scheme over $\Q$.\footnote{If $F_{v}=\Q_{\ell}$, then  $\Hom(F_{v}/\OO_{F,v}, \bmu_{\Q})=T_{\ell}\bmu_{\Q}$, the $\ell$-adic Tate module of roots of unity.} The scheme $\Psi_{v}$ is a torsor for the action of $\OO_{F,v}^{\times}$ (viewed as a constant profinite group scheme over $\Q$) by $a.\psi_{v}(x):=\psi_{v}(ax)$. 
 We denote by $\psi_{{\rm univ},v}\colon F_{v}\to \OO(\Psi_{v})^{\times}$ the universal additive character of level $0 $ of $F_{v}$. 
 
\paragraph{The top derivative functor}
Extend any additive character $\psi_{v}$ of $F_{v}$  to a character of $N_{v}$, still denoted by $\psi_{v}$, by $u\mapsto \psi_{v}(u_{1,2}+\ldots+u_{n-1,n})$.

\begin{prop}  \lb{topder} There is a  functor, called the Bernstein--Zelevinsky top derivative,
$$V\mapsto V^{(n)}$$
from smooth $\OO_{X}[G_{v}]$-module to  $\OO_{X}$-modules,
satisfying:
\begin{enumerate}
\item if $X\stackrel{f}\to \Psi_{v}$ is a $\Psi_{v}$-scheme, then 
$$V^{(n)}=V/V({N_{v}, \psi_{X,v}}),$$
where $\psi_{X,v}\colon N_{v}\stackrel{\psi_{{\rm univ},v}}{\longrightarrow}\OO(\Psi_{v})^{\ts}\stackrel{f^{*}}{\longrightarrow}\OO(X)^{\ts} $  is the composition and $V({N_{v}, \psi_{X,v}})= \{n.v-\psi_{X,v}(n)v\colon v\in V\}$;
\item it is exact in the sense that for any $\OO_{X}$-module $M$, $(V\ot_{\OO_{X}} M)^{(n)}= V^{(n)}\ot_{\OO_{X}} M$;
\item if $V$ is finitely generated then $V^{(n)}$ is coherent.
\end{enumerate}
\end{prop}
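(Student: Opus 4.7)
The plan is to define $V^{(n)}$ in a character-free way by descent and then read off the three properties. Pulling $V$ back to the $\Psi_{v}$-scheme $X_{\Psi_{v}}:=X\ts_{\Spec\Q}\Psi_{v}$, one has the canonical universal character $\psi_{X_{\Psi_{v}}, v}$ available, so the formula of (1) defines an $\OO_{X_{\Psi_{v}}}$-module $\wtil{V}^{(n)}$. The torus $\OO_{F,v}^{\times}$ acts compatibly on this construction: on $\OO(\Psi_{v})$ through the torsor structure of \eqref{psiuniv} and on $V$ via left multiplication by $\delta_{a}:=\mathrm{diag}(a^{n-1},\ldots,a,1)$, which normalises $N_{v}$ and carries $\psi_{v}$ to $a.\psi_{v}$. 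Faithfully-flat descent along $X_{\Psi_{v}}\to X$ then produces the required $\OO_{X}$-module $V^{(n)}$, with property (1) built into the construction.

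Property (2) is then formal. Since $G_{v}$ acts trivially on $M$, one has $n.(v\ot m)-\psi_{X,v}(n)(v\ot m)=(n.v-\psi_{X,v}(n)v)\ot m$, whence $(V\ot_{\OO_{X}} M)(N_{v},\psi_{X,v})=V(N_{v},\psi_{X,v})\ot_{\OO_{X}} M$. Passing to the quotient and descending gives the desired identification, and in particular the functor commutes with arbitrary colimits.

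For (3), which I expect to be the main obstacle, I would work Zariski-locally on $X$ after fixing some $\psi_{X,v}$. Fix a finite list of $\OO_{X}[G_{v}]$-generators of $V$ and pick a compact open subgroup $K\subset N_{v}$ that stabilises them and on which $\psi_{X,v}$ is trivial. The classical Bernstein--Zelevinsky theory (filtering $V$ as a smooth $P_{v}$-module by its derivatives) combined with the exactness established in (2), as carried out in the mixed-characteristic setting of [Mixed], shows that the composition $V^{K}\hookrightarrow V\twoheadrightarrow V^{(n)}$ is surjective, which realises $V^{(n)}$ as a quotient of a coherent $\OO_{X}$-module thanks to Lemma \ref{is coh}. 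The delicate point is to propagate these finiteness results from field-coefficient or mixed-characteristic bases to an arbitrary reduced Noetherian $X$; one handles this either by spreading out from generic points (using that the formation of $V^{(n)}$ commutes with localisation, by (2)) or by directly adapting the relevant coherence arguments from Helm's papers.
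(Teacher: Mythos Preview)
Your treatment of (1) and (2) is fine and essentially matches the paper, which likewise defers to \cite[\S 3.1]{EH} for the descent construction and the base-change compatibility.

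For (3) there is a genuine gap. You propose to exhibit $V^{(n)}$ as a quotient of $V^{K}$ for a small compact open $K\subset N_{v}$, and then to invoke Lemma~\ref{is coh} for the coherence of $V^{K}$. But Lemma~\ref{is coh} is a statement about $\Hom_{\OO_{X}[G]}(V,W)$ with $W$ admissible; it says nothing about $N_{v}$-invariants, and in fact $V^{K}$ is typically \emph{not} coherent: already for $V=\OO_{X}[G_{v}]$ with the left regular action, $V^{K}$ is the space of compactly supported $\OO_{X}$-valued functions on the infinite discrete set $K\backslash G_{v}$, hence free of infinite rank. The surjection $V^{K}\twoheadrightarrow V^{(n)}$ is the easy part (average any $v\in V$ over $K$, using that $\psi$ is trivial there); what fails is coherence of the source. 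Your fallback of ``spreading out from generic points'' does not help either: fibrewise finite-dimensionality of $(V\otimes K(x))^{(n)}$ does not by itself force the $\OO_{X}$-module $V^{(n)}$ to be finitely generated.

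The paper's argument for (3) follows an entirely different route and hinges on a nontrivial external input. One first uses the Bernstein centre to replace $X$ by a finite-type $X$-scheme $Y$ on which $\OO(\frak X_{n})$ acts through scalars, then applies the adjunction $V^{(n)}\cong \Hom_{P_{v}}(\text{c-Ind}_{N_{v}}^{P_{v}}\psi, V)$ of \cite[Proposition~3.1.3.2]{EH} and injects the latter into $\Hom_{G_{v}}(\text{c-Ind}_{N_{v}}^{G_{v}}\psi, V\otimes|\det|)$. The crucial point is the theorem of Bushnell--Henniart \cite{BH} that $\text{c-Ind}_{N_{v}}^{G_{v}}\psi$ is finitely generated over the Bernstein centre; it is \emph{this} finiteness, on the source side of a $G_{v}$-equivariant $\Hom$, that makes Lemma~\ref{is coh} genuinely applicable. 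Your sketch does not touch this circle of ideas.
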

\begin{proof} The definition and basic properties of the top derivative, including exactness, are recalled in \cite[\S 3.1]{EH}: see in particular (the proof of)  Proposition 3.1.4 for the descent from $\Psi_{v}$-schemes to $\Q$-schemes.  We prove part 3, dropping all subscripts $v$ from the notation.  The action of the Bernstein centre gives a map $\OO_{X}\ot\OO_{\frak X}\to \End_{\OO_{X}[G_{}]}(V)$ whose image we denote by $A$: by Lemma \ref{is coh}, $A$ is a coherent sheaf of $\OO_{X}$-algebras. Then we may view $V$ as a (finitely generated) $\OO_{Y}[G]$-module where ${Y}={\underline{\Spec}}_{\OO_{X}}A$. By construction, the   Bernstein centre acts on $V$ by scalars through the map $ \OO_{\frak X}\to \OO_{Y}$. After a base-change we may assume that $Y$ is a $\C$-scheme. 
 Let $\psi_{}$ be any $\C^{\ts}$-valued character of $F_{}$.
By the adjunction of \cite[Proposition 3.1.3.2]{EH}, we have an isomorphism
$$V^{(n)}\cong \Hom_{\C[P_{}]} ( \text{c-Ind}_{N_{}}^{P_{}}\psi, V)=  \Hom_{\C[P_{}]} ( (\text{c-Ind}_{N_{}}^{P_{}}\psi) \ot  |\det|,
V \ot|\det|).$$
By the following  lemma, together with the general  compatibility $ \text{c-Ind}_{P_{}}^{G_{}} \text{c-Ind}_{N_{}}^{P_{}}= \text{c-Ind}_{N_{}}^{G_{}}$ and the well-known fact the the modulus character of $P$ is $|\det|^{-1}$, the latter space injects into 
$$\Hom_{\C[G_{}]} ( \text{c-Ind}_{N_{}}^{G_{}}\psi_{}, V\ot |\det|)= \Hom_{\OO_{Y}[G_{}]} ( (\text{c-Ind}_{N_{}}^{G_{}}\psi_{})\ot_{\OO_{\frak X_{\C}}}\OO_{Y}, V\ot |\det|),$$
where the identity holds because every homomorphism of smooth $\C[G_{}]$-modules commutes with the action of the Bernstein centre.
By a key result of Bushnell--Henniart \cite{BH}, the $\OO_{{\frak X}_{\C}}[G_{}]$-module  $\text{c-Ind}_{N_{}}^{G_{}}\psi$ is finitely generated, hence the $\Hom$-space under consideration is a coherent  $\OO_{Y}$-module, hence also a coherent $\OO_{X}$-module.
\end{proof}

\begin{lemm} Let $G$ be locally compact, totally disconnected topological group and let $P$ be a closed subgroup. Let $\delta_{P}$, $\delta_{G}$ be the respective modulus characters and let $\delta_{P\bks G}:=  \delta_{P}^{-1}\delta_{G|P}$. Let $W$ be a smooth $\C[P]$-module, $V$ be a smooth $\C[G]$-module. Then there is a natural injection  
\beq \Hom_{P}(W, V) \to \Hom_{G}( \textup{c-Ind}_{P}^{G}  (W \ot \delta_{P\bks G}) , V).
\eeq
\end{lemm}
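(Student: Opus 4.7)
The plan is to construct the map by a Frobenius-reciprocity-style averaging formula and then verify injectivity by testing on functions supported near the identity.

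Given $\phi \in \Hom_P(W, V)$, I would define $\tilde\phi \colon \textup{c-Ind}_P^G(W \ot \delta_{P\bks G}) \to V$ by
\[
\tilde\phi(f) := \int_{P \bks G} g^{-1}\cdot \phi(f(g))\, dg,
\]
where $dg$ is a right quasi-invariant measure on $P \bks G$ transforming under left translation by $p \in P$ with multiplier $\delta_{P\bks G}(p)^{-1}$. Since $f$ is smooth and has compact support modulo $P$, only finitely many double cosets $P \bks G / K$ contribute for a sufficiently small compact open subgroup $K \subset G$ fixing $f$ under right translation, so the integral reduces to a finite weighted sum and converges in $V$.

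To check that the integrand descends to a well-defined section on $P\bks G$ matching $dg$, one computes: under $g \mapsto pg$ for $p \in P$, the defining transformation law of $\textup{c-Ind}_P^G(W \ot \delta_{P\bks G})$ gives $f(pg) = \delta_{P\bks G}(p)\, p \cdot f(g)$; $P$-equivariance of $\phi$ then yields $\phi(f(pg)) = \delta_{P\bks G}(p)\, p \cdot \phi(f(g))$; and multiplying by $(pg)^{-1} = g^{-1}p^{-1}$ produces $\delta_{P\bks G}(p)\, g^{-1}\cdot \phi(f(g))$, which is exactly what the quasi-invariance of $dg$ demands. Equivariance of $\tilde\phi$ under right translation by $G$ then follows from right $G$-invariance of $dg$ by a standard change of variables, and the assignment $\phi \mapsto \tilde\phi$ is visibly linear and natural in $V$.

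For injectivity, suppose $\tilde\phi = 0$ and pick $w \in W$. Choose a compact open subgroup $K \subset G$ small enough that $\phi(w) \in V$ is $K$-fixed and $K \cap P$ fixes $w$; let $f \in \textup{c-Ind}_P^G(W \ot \delta_{P\bks G})$ be the unique element supported on $PK$ with $f(1) = w$ (determined elsewhere by the equivariance rule). A direct computation gives $\tilde\phi(f) = \vol(PK/P)\cdot \phi(w)$, which vanishes; since the volume is positive, $\phi(w) = 0$, and as $w$ was arbitrary, $\phi = 0$. The main obstacle is not conceptual but the bookkeeping: one must track the conventions for $\delta_{P\bks G}$ and for the quasi-invariant measure $dg$ consistently throughout, which is the usual delicate point of compact Frobenius reciprocity with a modulus twist.
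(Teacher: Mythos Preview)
Your proof is correct and follows essentially the same strategy as the paper's: construct the Frobenius-reciprocity map and test injectivity on an element supported near the identity. The paper packages the construction algebraically via the identification $\text{c-Ind}_{P}^{G}(W\otimes\delta_{P\backslash G})\cong \mathscr{H}_{G}\otimes_{\mathscr{H}_{P}}W$ (citing Renard), writing the map as $\phi\mapsto(f*w\mapsto f*\phi(w))$ and checking injectivity on $e_{U}*w$ for a small compact open $U$; your integral formula and your test function supported on $PK$ with $f(1)=w$ are exactly the same objects in the function model, so the two arguments coincide once one unwinds the Hecke-algebra description.
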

I am grateful to D. Helm for suggesting  the statement of this lemma.
\begin{proof} By \cite[III.2.6]{renard}, there is a canonical isomorphism $\text{c-Ind}_{P}^{G}( W\ot \delta_{P\bks G})\cong \cH_{G}\ot_{\cH_{P}} W$. Viewing it  as an identification, the asserted injection  is  given by the map
$$\phi\mapsto ( \phi' \colon f* w\mapsto f*\phi(w)).$$
This is injective since, if $\phi\neq 0$ and  $w\in W $ is such that $\phi(w)\in V^{U}$ is nonzero, then $\phi'(e_{U}* w)= \phi(w)\neq 0$ for the idempotent $e_{U}\in \cH_{G}$ corresponding to $U$.
\end{proof}

If $V$ is an $\OO_{X}[G_{S}]$-module and $v\in S$, we denote by $(\ )^{(n),v}$ the top derivative with respect to $G_{v}$ and, if $T\subset S$, by $(\ )^{(n), T}$ the composition of $(\ )^{(n), v}$ for all $v\in T$. The space $V^{(n),T}$ is an $\OO_{X}[G_{S-T}]$-module. If $T=S$, we simply write
$$V^{(n)}:=V^{(n), S}, $$
an $\OO_{X}$-module.

\subsection{Co-Whittaker modules}  We continue with the notation of the previous subsection. By \cite[\S 3.1]{EH}, there is a functor on smooth $\OO_{X}[G_{S}]$-modules
 $$V\mapsto \mathfrak{J}(V)=\mathfrak{J}_{S}(V),$$ 
 the ``space of  Schwartz functions''  in $V$ with respect to all $v\in S$, which also commutes with base-change.  The space $ \mathfrak{J}(V)\subset V$ is the maximal $P_{S}$-stable $\OO_{X}$-submodule such that $\mathfrak{J}(V)^{(n)}=V^{(n)}$. 

\begin{lemm}\label{schwartz} For   a smooth admissible $\OO_{X}[G_{S}]$-module $V$,
 the following are equivalent:
\begin{enumerate}
\item   Every nonzero $\OO_{X}[G_{S}]$-module quotient $ W$ of $V$ is non-degenerate (i.e. it satisfies  $W^{(n)}\neq 0$).
\item The space $\mathfrak{J}(V)$ generates $V$ over $\OO_{X}[G_{S}]$.
\end{enumerate}
Moreover if $V$ is finitely generated over $\OO_{X}[G_{S}]$,  the set of $x\in X$ such that $V\otimes K(x)$ satisfies the above conditions is open.
\end{lemm}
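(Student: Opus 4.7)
The equivalence (1) $\Leftrightarrow$ (2) will follow formally from three ingredients: the exactness of the top derivative functor $(-)^{(n)}$ (Proposition \ref{topder}(2)), the defining identity $\mathfrak{J}(V)^{(n)} = V^{(n)}$, and the complementary property of $\mathfrak{J}$ recorded in \cite[\S 3.1]{EH} that $\mathfrak{J}(W) = 0$ whenever $W^{(n)} = 0$ (which supplements, rather than follows from, the maximality characterisation recalled in the excerpt). Functoriality of $\mathfrak{J}$ will provide the compatibility with quotient maps that both directions require.

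For (1) $\Rightarrow$ (2) I argue by contrapositive. Put $V_{0} := \OO_{X}[G_{S}]\cdot \mathfrak{J}(V)$ and suppose $V_{0} \subsetneq V$, so that $W := V/V_{0}$ is a nonzero $\OO_{X}[G_{S}]$-quotient. Applying the exact functor $(-)^{(n)}$ to $V_{0} \hookrightarrow V \twoheadrightarrow W$ together with the inclusion $\mathfrak{J}(V) \subseteq V_{0}$ and the identity $\mathfrak{J}(V)^{(n)} = V^{(n)}$ forces $V_{0}^{(n)} = V^{(n)}$, hence $W^{(n)} = 0$, contradicting (1). For (2) $\Rightarrow$ (1), again by contrapositive, let $W$ be a nonzero quotient of $V$ with $W^{(n)} = 0$; then $\mathfrak{J}(W) = 0$, so functoriality places the image of $\mathfrak{J}(V)$ in $W$ inside $\mathfrak{J}(W) = 0$. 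Consequently $\mathfrak{J}(V) \subseteq \ker(V \twoheadrightarrow W) \subsetneq V$, and $\mathfrak{J}(V)$ cannot generate $V$ over $\OO_{X}[G_{S}]$.

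For the openness assertion, assume $V$ is finitely generated and set $W := V / \OO_{X}[G_{S}]\cdot \mathfrak{J}(V)$. Since $\mathfrak{J}$ commutes with base change, the desired locus is $X \smallsetminus \mathrm{Supp}_{\OO_{X}}(W)$. The main obstacle is that $W$ is not a priori a coherent $\OO_{X}$-module, so upper semicontinuity of the support is not automatic. I would bypass this along the lines of the proof of Proposition \ref{topder}(3): let $A \subseteq \End_{\OO_{X}[G_{S}]}(V)$ be the image of the Bernstein centre, which is $\OO_{X}$-coherent by Lemma \ref{is coh}, and regard $V$ as a finitely generated module over $\OO_{Y}[G_{S}]$ where $Y := \underline{\Spec}_{\OO_{X}}A$. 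Over $Y$ the representation $V$ breaks up into finitely many pieces with fixed Bernstein component; on each piece a compatible generating system makes the analogous quotient $W_{/Y}$ coherent over $\OO_{Y}$, reducing the claim to the standard closedness of the support of a coherent sheaf on $Y$. Since $Y \to X$ is finite, the image of this closed set is closed in $X$, giving the sought-for open complement.
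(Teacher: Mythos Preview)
Your argument for the equivalence (1)$\Leftrightarrow$(2) is correct and in fact spells out what the paper simply cites from \cite[Lemma 6.3.2]{EH}; the extra ingredient you invoke (that $\mathfrak{J}(W)=0$ when $W^{(n)}=0$) is indeed available from the explicit construction of $\mathfrak{J}$ in \cite{EH}, even though it does not follow from the maximality characterisation alone.

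For the openness statement, however, your route through the Bernstein centre is both unnecessarily indirect and, as written, contains a gap: decomposing $V$ into finitely many Bernstein components over $Y$ does not make the quotient $W=V/\OO_X[G_S]\cdot\mathfrak{J}(V)$ coherent over $\OO_Y$ --- it remains a smooth $G_S$-module, typically of infinite $\OO_Y$-rank on each component --- so the sentence ``a compatible generating system makes the analogous quotient $W_{/Y}$ coherent over $\OO_Y$'' is not justified, and the final appeal to closedness of supports of coherent sheaves does not apply. The paper's device is much more direct and avoids this entirely: choose an open compact subgroup $U\subset G_S$ small enough that $V^U$ generates $V$ over $\OO_X[G_S]$ (possible by finite generation), and set $M:=W^U$. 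Since $V$ is admissible, $V^U$ is coherent; since taking $U$-invariants is exact in characteristic zero (it is given by the idempotent $e_U$) and commutes with base change, $M$ is a coherent quotient of $V^U$, and $W\otimes K(x)=0$ if and only if $M\otimes K(x)=0$. Thus the desired open locus is exactly $X\smallsetminus\mathrm{Supp}(M)$. Note that this is precisely where the characteristic hypothesis enters, as the paper's remark following the lemma makes explicit.
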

As pointed out to us by the referee, the proof of the second part would fail in a characteristic dividing the pro-order of $G_{S}$. 
\begin{proof} We may reduce to the case where $X$ is the spectrum of a Noetherian local ring; then the equivalence is proved in \cite[Lemma 6.3.2]{EH}. The set of $x\in X$ such that $V\otimes K(x)$ satisfies both  conditions is    open as it is the complement of the support of the  coherent $\OO_{X}$-module $M$ defined as follows: let $U\subset G_{S}$ be an open compact subgroup such that $V^{U}$ generates $V$ over $\OO_{X}[G_{S}]$. Then $M$ is the sheaf of $U$-invariant vectors of the quotient of $V$ by the $\OO_{X}[G_{S}]$-span of $\mathfrak{J}(V)$.  
\end{proof}
  The following is \cite[Definition 2.1]{hm-conv} (after \cite[Definition 6.1]{helm-whitt}).
\begin{defi}\label{coW def} A smooth admissible  $\OO_{X}[G_{S}]$-module $V$ is said to be 
\begin{itemize}
\item  of \emph{Whittaker type} if    $V^{(n)}$ is  a locally free $\OO_{X}$-module of rank $1$;
\item \emph{strictly of Whittaker type} if $V^{(n)}\cong \OO_{X}$.
\item \emph{(strictly) co-Whittaker} if  it is (strictly) of Whittaker type and it satisfies the conditions of Lemma \ref{schwartz} (that is,  $\mathfrak{J}(V)$ generates $V$ over $\OO_{X}[G_{S}]$).
\end{itemize}  
\end{defi} 
\begin{lemm}\label{coW open} Let $V$ be a smooth    finitely generated $\OO_{X}[G_{S}]$-module such that $(V\otimes K(x))^{(n)}\neq 0$ for all $x$ in a dense subset $\Sigma \subset X$.
 The set of $x'\in X$ such that $V\otimes K(x')$ is co-Whittaker is open.
\end{lemm}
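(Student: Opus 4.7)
The plan is to decompose the co-Whittaker condition at a fiber $V\otimes K(x)$ into its two constituent parts---being of Whittaker type, and having $\mathfrak{J}$ generate---and to show each defines an open locus; their intersection is then the desired open set.

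Since $V$ is finitely generated, Proposition \ref{topder}(3) gives that $V^{(n)}$ is a coherent $\OO_{X}$-module, and Proposition \ref{topder}(2) yields $(V\otimes K(x))^{(n)}=V^{(n)}\otimes K(x)$ for every $x\in X$. The support of a coherent sheaf is a closed subset of $X$; by hypothesis it contains the dense subset $\Sigma$, and hence equals all of $X$. Therefore $V^{(n)}\otimes K(x)\neq 0$ for every $x\in X$. Upper semicontinuity of the fiber-dimension function for a coherent sheaf now shows that
\[ U_{1}:=\{x\in X : \dim_{K(x)} V^{(n)}\otimes K(x)\leq 1\} \]
is open in $X$. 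On $U_{1}$, combining the everywhere-nonvanishing with the upper bound forces the fiber rank to be identically $1$; and since objects of ${\rm Noeth}_{K}$ are reduced, a coherent sheaf of constant fiber rank is locally free of that rank. Hence $V^{(n)}|_{U_{1}}$ is a line bundle and $V\otimes K(x)$ is of Whittaker type for every $x\in U_{1}$.

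Next, let $U_{2}$ be the set of $x\in X$ where $\mathfrak{J}(V\otimes K(x))$ generates $V\otimes K(x)$ as a $K(x)[G_{S}]$-module. The second statement of Lemma \ref{schwartz}---whose proof realises the complementary locus as the support of the coherent $\OO_{X}$-module of $U$-invariants in the quotient of $V$ by the $\OO_{X}[G_{S}]$-submodule generated by $\mathfrak{J}(V)$, and thus requires only the finite generation of $V$---gives that $U_{2}$ is open in $X$. (Admissibility of the fibers $V\otimes K(x)$, needed to apply Definition \ref{coW def} to them, is automatic for finitely generated smooth representations of products of $\GL_{n}(F_{v})$ by Bernstein.)

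Putting the two steps together, the co-Whittaker locus equals $U_{1}\cap U_{2}$, which is open. I expect no serious obstacle in this argument: the only delicate ingredient is the passage from constant fiber rank of a coherent sheaf to local freeness, which would fail over a non-reduced base but goes through here because ${\rm Noeth}_{K}$ consists of reduced Noetherian schemes.
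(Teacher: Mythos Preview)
Your proof is correct and follows essentially the same approach as the paper's: coherence of $V^{(n)}$ via Proposition \ref{topder}, semicontinuity to get openness of $U_{1}$, density of $\Sigma$ to force nonvanishing of the fibres (you phrase this via the closed support; the paper equivalently argues that the open locus $U_{0}$ misses $\Sigma$ hence is empty), and Lemma \ref{schwartz} for the Schwartz-generation condition. Your additional remarks---the explicit invocation of Proposition \ref{topder}(2) for base change, the reducedness needed to pass from constant fibre rank to local freeness, and the admissibility of the fibres via Bernstein's theorem---are accurate and fill in details the paper leaves implicit.
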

\begin{proof} 
By Proposition \ref{topder}.3, the $\OO_{X}$-module $V^{(n)}$ is coherent. By the semicontinuity of its rank, the set $U_{r}\subset X$ of those $x$ such that $V^{(n)}\otimes K(x)$ has rank $\leq r$ is open for all $r$; by assumption, $U_{0}\cap \Sigma=\emptyset$ hence $U_{0}=\emptyset $ and at all points of the open $U:=U_{1}$, $V^{(n)}$ has fibre-rank $1$ so that  $V^{(n)}|_{U}$ is an invertible sheaf. The openness of the other two conditions in Definition \ref{coW def} is Lemma \ref{schwartz}.
\end{proof}
\begin{defi}\label{sub-coW}  If $V$ is a finitely generated $\OO_{X}[G_{S}]$-module of Whittaker type, its \emph{maximal co-Whittaker submodule} $V'$ is the $\OO_{X}[G_{S}]$-span of $\frak J(V)$. \end{defi}
By the previous Lemma, a fibre $V_{x}$  is co-Whittaker if and only if $V'_{x}=V_{x}$, i.e. $x$ belongs to the open complement of the support of $V/V'$.

\begin{lemm}\label{schur} Let $V$ be a  torsion-free  co-Whittaker module over $X$. Then the natural map  $\OO_{X}\to\underline{\rm End}_{\OO_{X}[G]}V$ is an isomorphism.
\end{lemm}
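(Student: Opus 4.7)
The argument is a Schur-type lemma exploiting the two defining features of a co-Whittaker module: the top Bernstein--Zelevinsky derivative $V^{(n)}$ is an invertible $\OO_{X}$-module, and by Lemma \ref{schwartz} every nonzero $\OO_{X}[G]$-quotient of $V$ has nonzero top derivative. Since the assertion concerns sheaves of $\OO_{X}$-modules, it suffices to verify it on a cover of $X$ by small affine opens $U$ on which $V^{(n)}_{|U} \cong \OO_{U}$; by descent one may further assume $U$ is a $\Psi_{S}$-scheme, so that $V^{(n)}_{|U}$ admits the twisted coinvariants description of Proposition \ref{topder}(1). On such $U$, functoriality of $(\,)^{(n)}$ sends any $\phi \in \End_{\OO_{X}[G]}(V_{|U})$ to an element $\phi^{(n)} \in \End_{\OO_{X}}(V^{(n)}_{|U}) = \OO_{X}(U)$, namely multiplication by a uniquely determined section $s = s(\phi) \in \OO_{X}(U)$.

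Set $\psi := \phi - s \cdot \mathrm{id}_{V} \in \End_{\OO_{X}[G]}(V_{|U})$, so that $\psi^{(n)} = 0$, and let $W := \psi(V) \subset V$. Then $W$ is a $G$-submodule of $V$ and simultaneously a $G$-quotient of $V$ via the surjection $\psi \colon V \twoheadrightarrow W$. Exactness of the top derivative functor on smooth representations yields both an injection $W^{(n)} \hookrightarrow V^{(n)}$ (from $W \hookrightarrow V$) and a surjection $V^{(n)} \twoheadrightarrow W^{(n)}$ (from $V \twoheadrightarrow W$); their composite is $\psi^{(n)} = 0$, forcing $W^{(n)} = 0$. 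Since $W$ is an $\OO_{X}[G]$-quotient of the co-Whittaker module $V$, Lemma \ref{schwattaker} — read in the direction that any nonzero quotient is non-degenerate — forces $W = 0$, and therefore $\phi = s \cdot \mathrm{id}_{V}$. This shows that $\OO_{X} \to \underline{\End}_{\OO_{X}[G]}(V)$ is surjective on sections over $U$, and hence globally after gluing.

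For injectivity, a section $s \in \OO_{X}(U)$ acting as zero on $V_{|U}$ must act as zero on the invertible sheaf $V^{(n)}_{|U} \cong \OO_{U}$, whence $s = 0$. The only delicate ingredient is the exactness of $(\,)^{(n)}$ on short exact sequences of smooth $\OO_{X}[G_{S}]$-modules (beyond the base-change exactness recorded in Proposition \ref{topder}(2)), which reduces via Proposition \ref{topder}(1), after passing to a $\Psi_{S}$-scheme base, to the well-known exactness of the BZ top-derivative functor on smooth representations. Torsion-freeness of $V$ does not intervene directly in the argument above; it ensures however that $\underline{\End}_{\OO_{X}[G]}(V)$ is itself torsion-free, which is the natural setting in which the clean isomorphism statement is to be formulated.
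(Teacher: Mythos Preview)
Your argument is correct and is essentially the argument of \cite[Proposition 6.3.4 (3)]{EH} to which the paper defers: one extracts from any $\phi\in\End_{\OO_X[G]}(V)$ the scalar $s=\phi^{(n)}\in\End_{\OO_X}(V^{(n)})\cong\OO_X$, and then uses exactness of the top derivative together with the defining non-degeneracy property of co-Whittaker modules to kill $\phi-s\cdot\mathrm{id}$. Two small remarks: the cross-reference ``\texttt{schwattaker}'' is a typo for \texttt{schwartz}; and your closing observation is accurate --- the argument does not actually use torsion-freeness (injectivity follows already from $V^{(n)}$ being invertible), so the hypothesis in the statement is there for the sake of the ambient setting rather than for this particular lemma.
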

\begin{proof} The argument of \cite[Proposition 6.3.4 (3)]{EH} carries over to our context. 
\end{proof}
\begin{lemm}\label{415} Let $V_{1}$, $V_{2}$ be torsion-free  co-Whittaker modules over $X$ and suppose that there is a  dense subset $\Sigma \subset X$ such that $$V_{1}\otimes K(x)\cong V_{2}\otimes K(x)$$ for all $x\in \Sigma$. 
Then $L:= \underline{\Hom}_{\OO_{X}[G_{S}]}(V_{2}, V_{1})$ is an invertible sheaf and  $$V_{1}\cong V_{2}\otimes L$$ as $ \OO_{X}[G_{S}]$-modules.
\end{lemm}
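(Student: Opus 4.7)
The approach is to construct a tautological evaluation map $\mu\colon V_{2}\otimes L \to V_{1}$ and to prove simultaneously that $L$ is invertible and that $\mu$ is an isomorphism, using the Bernstein--Zelevinsky top derivative. Assuming that $V_{2}$ is finitely generated over $\OO_{X}[G_{S}]$ (a standard consequence of the co-Whittaker condition, valid at least on sufficiently small opens), Lemma \ref{is coh} gives that $L$ is a coherent $\OO_{X}$-module. Define the $\OO_{X}[G_{S}]$-linear map $\mu$ by $v\otimes \phi\mapsto \phi(v)$; passing to top derivatives via Proposition \ref{topder}(2), and using that both $V_{i}^{(n)}$ are invertible, $\mu^{(n)}$ is equivalent to a morphism
$$\alpha\colon L \longrightarrow L_{0}:=V_{1}^{(n)}\otimes(V_{2}^{(n)})^{-1}$$
of $\OO_{X}$-modules into an invertible sheaf.

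Injectivity of $\alpha$ is formal: if $\phi^{(n)}=0$, then the quotient $\phi(V_{2})$ of $V_{2}$ has zero top derivative (by exactness of the top derivative), hence vanishes by condition (1) of Lemma \ref{schwartz} applied to $V_{2}$; so $\phi=0$. For surjectivity, fix $x\in \Sigma$: Lemma \ref{schur} applied over $K(x)$ yields $\Hom_{K(x)[G_{S}]}(V_{2}|_{x},V_{1}|_{x})\cong K(x)$, with the assumed fibrewise isomorphism as a generator. A local lifting argument, exploiting the finite generation of $V_{2}$ and the torsion-freeness of $V_{1}$, extends this generator to a section of $L$ in a neighbourhood of $x$; via $\alpha$, it maps to a nowhere-vanishing section of $L_{0}$ there, so $\alpha$ is surjective in such a neighbourhood. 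The coherent cokernel of $\alpha$ thus vanishes on an open set containing $\Sigma$, hence everywhere, since $X$ is reduced Noetherian, $\Sigma$ is dense, and $L_{0}$ is invertible. Combined with injectivity, this gives $L\cong L_{0}$; in particular $L$ is invertible.

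It remains to prove $\mu$ is an isomorphism. Now $\mu^{(n)}\cong \alpha\otimes \id$ is an isomorphism, so the cokernel of $\mu$, a quotient of $V_{1}$, has vanishing top derivative and is therefore zero by condition (1) of Lemma \ref{schwartz} applied to $V_{1}$; this proves surjectivity. For injectivity, the kernel of $\mu$ is a submodule of $V_{2}\otimes L$, which is torsion-free. For each $x\in \Sigma$, the fibre $\mu_{x}\colon V_{2}|_{x}\otimes L|_{x}\to V_{1}|_{x}$ is a nonzero element of the one-dimensional space $\Hom_{K(x)[G_{S}]}(V_{2}|_{x}\otimes L|_{x},V_{1}|_{x})$ (nonzero because it induces the nonzero $\alpha_{x}$ on top derivatives), hence differs from the given isomorphism by a nonzero scalar and is itself an isomorphism. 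So $\ker(\mu)$ vanishes on the dense set $\Sigma$ and, by torsion-freeness, vanishes identically.

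The principal obstacle is the local lifting step needed for the surjectivity of $\alpha$: this is the only point at which one must genuinely promote pointwise data on $\Sigma$ to sheaf-level data on $L$, and it is where the torsion-freeness hypotheses on $V_{1}$ and $V_{2}$ must be used in an essential way. All remaining steps follow formally from the co-Whittaker conditions (via Lemmas \ref{is coh}, \ref{schwartz}, \ref{schur}) and the exactness and finiteness properties of the top-derivative functor (Proposition \ref{topder}).
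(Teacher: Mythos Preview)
Your overall architecture is sound, and the arguments for injectivity of $\alpha$ and for $\mu$ being an isomorphism (given that $L$ is invertible) are correct. The genuine gap is exactly where you flag it: the ``local lifting argument'' for surjectivity of $\alpha$. What you need is that for $x\in\Sigma$ the restriction map
\[
L_{x}=\Hom_{\OO_{X,x}[G_{S}]}(V_{2,x},V_{1,x})\;\longrightarrow\;\Hom_{K(x)[G_{S}]}(V_{2}|_{x},V_{1}|_{x})\cong K(x)
\]
is surjective, i.e.\ that some map of stalks reduces to a nonzero map on fibres. There is no base-change theorem for $\underline{\Hom}$ of $\OO_{X}[G]$-modules that delivers this; finite generation of $V_{2}$ (via Lemma~\ref{is coh}) shows $L$ is coherent but gives no control over lifting, and torsion-freeness of $V_{1}$ lets you detect vanishing of sections, not produce new ones. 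If you try to lift by choosing generators of $V_{2}$ and lifting their images, the relations among the generators need not be preserved.

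The paper's approach (citing \cite[Lemma 6.3.7]{EH}, whose method is reproduced in the proof of the lemma immediately following) circumvents this entirely and uses torsion-freeness in a different, essential way. One sets $\mathscr{K}:=\prod_{x\in\Sigma}K(x)$; torsion-freeness gives embeddings $V_{i}\hookrightarrow V_{i}\otimes\mathscr{K}$, and the hypothesis gives $V_{1}\otimes\mathscr{K}\cong V_{2}\otimes\mathscr{K}$. Inside this common ambient module one compares the free rank-one $A$-modules $V_{1}^{(n)}$ and $V_{2}^{(n)}$, obtaining $V_{1}^{(n)}=\gamma\,V_{2}^{(n)}$ for some $\gamma\in\mathscr{K}^{\times}$; then $\mathfrak{J}(V_{1})=\gamma\,\mathfrak{J}(V_{2})$, and since $\mathfrak{J}$ generates, $V_{1}=\gamma\,V_{2}$. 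This yields the isomorphism globally, with no lifting from fibres required. If you want to salvage your approach, the missing ingredient is precisely a mechanism of this sort (via $\mathfrak{J}$ or the Whittaker model) to manufacture a section of $L$ nonvanishing at $x$; as written, the proof is incomplete.
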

\begin{proof} We may  reduce   to the case where $X$ is affine, treated in 
 \cite[Lemma 6.3.7]{EH}. 
\end{proof} 

The previous result can be generalised.

\begin{lemm} 
 Let $V_{1}$, $V_{2}$ be smooth admissible torsion-free $\OO_{X}[G_{S}]$-modules over $X$ such that $V_{2}$ is co-Whittaker, $\mathfrak{J}(V_{1})$ generates $V_{1}$ over $\OO_{X}[G_{S}]$, and  $V_{1}^{(n)}$ is locally free of rank $r$ (a  locally constant function on $X$). Suppose 
  that there is a  dense subset $\Sigma \subset X$ such that $$V_{1}\otimes K(x)\cong V_{2}^{\oplus r(x)}\otimes K(x)$$ for all $x\in \Sigma$. 
Then $H:= \underline{\Hom}_{\OO_{X}[G_{S}]}(V_{2}, V_{1})$ is a locally free $\OO_{X}$-module  of rank $r$, 
and the natural map
   $$ V_{2}\otimes H\stackrel{\cong}{\longrightarrow} V_{1}$$  is an isomorphism
   of $ \OO_{X}[G_{S}]$-modules.
\end{lemm}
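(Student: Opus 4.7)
Extending the rank-one argument of Lemma~\ref{415}, the plan is to construct an $\OO_X$-linear comparison map $\alpha\colon H \to V_1^{(n)}$ by applying the top derivative to the evaluation map $\mathrm{ev}\colon V_2 \otimes_{\OO_X} H \to V_1$, prove $\alpha$ is an isomorphism of coherent sheaves (so $H$ is locally free of rank $r$), and finally deduce that $\mathrm{ev}$ itself is an isomorphism using the co-Whittaker-like rigidity of $V_1$ provided by Lemma~\ref{schwartz}.

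\textbf{Reductions, definition, and injectivity of $\alpha$.} The statement is local on $X$. Restricting to an affine open and replacing $V_2$ by $V_2 \otimes_{\OO_X} (V_2^{(n)})^{-1}$---which changes $H$ to $H \otimes V_2^{(n)}$ but leaves $V_2 \otimes_{\OO_X} H$ unchanged---I reduce to $V_2^{(n)} \cong \OO_X$ (strict co-Whittaker case), and further trivialise $V_1^{(n)} \cong \OO_X^r$. By Lemma~\ref{is coh}, $H$ is a coherent $\OO_X$-module, and torsion-freeness of $V_1$ makes $H$ torsion-free. Applying the exact functor $(\cdot)^{(n)}$ (Proposition~\ref{topder}.2) to $\mathrm{ev}$ yields the desired map $\alpha\colon H \to V_1^{(n)}$. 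At each generic point $\eta$ of $X$, $V_{2,\eta}$ inherits the co-Whittaker property (by base-change compatibility of $(\cdot)^{(n)}$ and $\mathfrak{J}$); any nonzero $\phi \in H_\eta$ has image a nonzero quotient of $V_{2,\eta}$, hence non-degenerate by Lemma~\ref{schwartz}, and the exactness of the top derivative then forces $\phi^{(n)} \neq 0$. So $\alpha_\eta$ is injective at every generic point, and torsion-freeness of $H$ promotes this to global injectivity.

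\textbf{Surjectivity of $\alpha$ (main obstacle) and conclusion.} At each $x \in \Sigma$, Schur's lemma (Lemma~\ref{schur}) applied to the co-Whittaker module $V_{2,x}$ over $K(x)$ yields $\Hom_{K(x)[G_S]}(V_{2,x}, V_{1,x}) = K(x)^r$, with the summand inclusions forming a basis whose top derivatives span $V_{1,x}^{(n)}$. Following the strategy of the proof of \cite[Lemma~6.3.7]{EH}, one combines the finite generation of $V_2$ over $\OO_X[G_S]$ (to control the base change of $\underline{\Hom}$ and thereby lift these fibrewise maps to sections of $H$ near $x$) with the stalkwise rigidity of Lemma~\ref{schwartz}---at each $x$, $V_{1,x}$ inherits the Schwartz-generation hypothesis, giving the equivalence $\mathrm{coker}(\mathrm{ev})_x = 0 \iff \mathrm{coker}(\alpha)_x = 0$: one shows $\alpha_x$ surjective for $x \in \Sigma$, and then density of $\Sigma$ combined with the global version of the rigidity applied to the $\OO_X[G_S]$-quotient $\mathrm{coker}(\mathrm{ev})$ of $V_1$ forces $\mathrm{coker}(\mathrm{ev}) = 0$, equivalently $\mathrm{coker}(\alpha) = 0$. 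Hence $\alpha$ is an isomorphism, $H$ is locally free of rank $r$, and $\mathrm{ev}$ is surjective. Its kernel $K$ has $K^{(n)} = 0$ by exactness; locally $V_2 \otimes H \cong V_2^{\oplus r}$, and projecting $K$ onto each summand yields $K_i \subset V_2$ with $K_i^{(n)} = 0$. The surjection $V_2 \twoheadrightarrow V_2/K_i$ between co-Whittaker modules induces an isomorphism on top derivatives, hence is itself an isomorphism by the uniqueness of co-Whittaker envelopes (following from the characterisation in Lemma~\ref{schwartz}), so $K_i = 0$; thus $K = 0$, completing the proof.
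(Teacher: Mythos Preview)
Your strategy---study the evaluation map $\mathrm{ev}\colon V_{2}\otimes_{\OO_{X}} H\to V_{1}$ through its top derivative $\alpha\colon H\to V_{1}^{(n)}$---is a natural one, but the argument has a genuine gap at the decisive step, and the final injectivity claim is incorrect as stated.

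\textbf{The surjectivity gap.} To show $\alpha$ is surjective you need, for $x\in\Sigma$, sections of $H$ whose specialisations at $x$ recover the $r$ summand inclusions $V_{2,x}\hookrightarrow V_{1,x}\cong V_{2,x}^{\oplus r}$. This is a base-change statement for $\underline{\Hom}_{\OO_{X}[G_{S}]}(V_{2},-)$, which you defer to ``the strategy of \cite[Lemma 6.3.7]{EH}''. But that lemma does \emph{not} proceed by any base-change-of-$\Hom$ argument; it embeds both modules into $\prod_{x\in\Sigma}K(x)$ and produces an explicit isomorphism there---precisely the paper's method. Even granting that $\alpha\otimes K(x)$ is surjective for every $x\in\Sigma$, you would only obtain that the coherent sheaf $\mathrm{coker}(\alpha)$ has vanishing fibre along $\Sigma$, hence is supported on a proper closed subset; nothing forces this support to be empty. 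Your appeal to the Schwartz-generation of $V_{1}$ only yields the implication $\mathrm{coker}(\alpha)=0\Rightarrow\mathrm{coker}(\mathrm{ev})=0$, which is the wrong direction for this purpose.

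The paper sidesteps the issue entirely. Writing $\mathscr{K}=\prod_{x\in\Sigma}K(x)$, torsion-freeness embeds both $V_{1}$ and $V_{2}^{\oplus r}$ into $V_{1}\otimes\mathscr{K}\cong V_{2}^{\oplus r}\otimes\mathscr{K}$; their top derivatives are two free rank-$r$ $A$-lattices in $\mathscr{K}^{r}$, related by some $\gamma\in\GL_{r}(\mathscr{K})$; since $\mathfrak{J}$ is determined by the top derivative one gets $\mathfrak{J}(V_{1})=\mathfrak{J}(\gamma V_{2}^{\oplus r})$, and taking $A[G_{S}]$-spans (both sides being Schwartz-generated) yields $V_{1}=\gamma V_{2}^{\oplus r}\cong V_{2}^{\oplus r}$ directly. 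Only afterward is $H\cong A^{\oplus r}$ read off via Lemma~\ref{schur}.

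\textbf{The injectivity error.} Your assertion that a surjection $V_{2}\twoheadrightarrow V_{2}/K_{i}$ of co-Whittaker modules over a field which is an isomorphism on top derivatives must itself be an isomorphism is false. Take $V_{2}=\pi_{\rm gen}({\bf s})$ for a multisegment ${\bf s}$ whose Langlands quotient $\pi({\bf s})$ is non-generic: then $\pi({\bf s})$ is the unique irreducible submodule of $V_{2}$ and has zero top derivative, while $V_{2}/\pi({\bf s})$ is again co-Whittaker (every quotient of it is a quotient of $V_{2}$, hence non-degenerate, and its top derivative is still of rank one). Once one knows $\mathrm{ev}$ is surjective, injectivity can be rescued by a different argument (torsion-freeness of $\ker(\mathrm{ev})$ together with a length comparison at $x\in\Sigma$), but that again rests on the unproved surjectivity step.
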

\begin{proof}
We may reduce to the case $X=\Spec A$ is affine and connected (so that $r\in \N$) and the $V_{i}^{(n)}$ are both free over $A$.  We have $\prod_{x\in \Sigma} V_{1}\otimes K(x) \cong \prod_{x\in \Sigma} V_{2}^{\oplus r} \otimes K(x)$.  Let $\mathscr{K}=\prod_{x\in \Sigma} K(x)$;  by the argument in the proof of \cite[Lemma 6.3.7]{EH}, we deduce $V_{1}\otimes \mathscr{K}\cong V_{2}^{\oplus r}\otimes \mathscr{K}$; by torsion-freeness, both $V_{1}$ and $V_{2}^{\oplus r}$ embed in this space. We deduce that $V_{1}^{(n)}$ and $V_{2}^{(n)\oplus r} $ are both free $A$-submodules  of $V_{1}^{(n)}\otimes \mathscr{K}$ of the same rank $r$, hence $V_{1}^{(n)}\cong \gamma V_{2}^{(n) \oplus r}$ for some $\gamma\in \GL_{r}(\mathscr{K})$, and $ \mathfrak{J} (V_{1})\cong \mathfrak{J} (\gamma V_{2} ^{\oplus r}) $; taking the $A[G]$-spans of both sides,   $\gamma $ yields  $V_{1}\cong V_{2}^{\oplus r}$. As $\End_{A[G]}(V_{2})\cong A$ by Lemma \ref{schur}, we may  write $H \cong A^{\oplus r}$ and $V_{1}\cong V_{2}\otimes H$.
\end{proof}

\subsection{The universal co-Whittaker module} In this subsection, $F$ is a local field and $G=\GL_{n}(F)$; we write $\Psi:=\Psi_{v}$. 
We explain the construction of a co-Whittaker module over the Bernstein variety $\frak X_{n}$ for $G$, which will be universal in a certain sense.

\begin{prop} \lb{tre} There exists an explicit  finitely generated co-Whittaker module $\frak W$ over $\frak X_{n}$, unique up to tensoring with a line bundle with trivial $G$-action, whose fibre at any  point $x$ is given as follows.

 Let $\sg_{x}$ be the  supercuspidal support over $K(x)$ corresponding to $x$, which is a multiset of supercuspidal representations of the factors of the   Levi of a parabolic (well-defined up to conjugation) in $ G$.   Choose any ordering $(\sigma_{1}, \ldots, \sg_{r})$  on $\sg_{x}$ such that for $i<j$, $\sigma_{i}$ (a representation of $G_{d_{i}}$, identified with a length-1 segment)  does not follow $\sigma_{j}$. Let $P\subset G_{n}$ be a parabolic subgroup with Levi $G_{d_{1}}\ts\cdots \ts G_{d_{r}}$. Then
\beq\lb{desc}\frak W_{|x}={\rm I}_{P}^{G_{n}} (\sg_{1}\ot \cdots \ot \sg_{r}) =\pi_{\rm gen }((\pi_{\rm ss}^{-1}(\ot_{i}\sg_{i}), 0)).\eeq
\end{prop}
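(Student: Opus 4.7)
The plan is to construct $\frak W$ one Bernstein component at a time by parabolic induction from a universal unramified twist, to glue over $\frak X_n = \coprod_{[\frak s]}\frak X_{[\frak s]}$, and to deduce uniqueness from Lemma \ref{415}. Fix an inertial class $[\frak s]$ with $\frak s = \otimes_i \frak s_i^{\otimes m_i}$ and, by the rationality lemma above, choose supercuspidal representatives $\sigma_i$ of $\frak s_i$ defined over some number field $K\subset\baar\Q$. Recall the surjection $Y := \prod_i \mathbf{G}_{m,K}^{m_i} \twoheadrightarrow \frak X_{[\frak s], K}$ of \eqref{unif bernstein}; the $(i,j)$-coordinate $t_{i,j}$ on $Y$ yields a universal unramified character $\chi_{i,j}\colon G_{d_i}(F)\to\mathscr{O}(Y)^\times$, $\chi_{i,j}(g) = t_{i,j}^{v_F(\det g)}$, and I set $\widetilde\sigma := \otimes_{i,j} \sigma_i \chi_{i,j}^{f_i}$, a finitely generated smooth representation of $M := \prod_i G_{d_i}^{m_i}$ over $\mathscr{O}(Y)$.

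Next I fix a parabolic $P \subset G_n$ with Levi $M$ and set $\widetilde{\frak W} := {\rm I}_P^{G_n}(\widetilde\sigma)$. By exactness of parabolic induction and base change this is a finitely generated, admissible, smooth $\mathscr{O}(Y)[G_n]$-module, and the Jacquet-integral construction of the Whittaker functional specialises to a nowhere-vanishing section of $\widetilde{\frak W}^{(n)}$, which by the usual Bernstein--Zelevinsky top-derivative computation is free of rank $1$ over $\mathscr{O}(Y)$. Hence $\widetilde{\frak W}$ is of Whittaker type. Its fibre at $y\in Y$ is $\widetilde{\frak W}_y = {\rm I}_P^{G_n}\big(\otimes_{i,j} \sigma_i \chi_{y,i,j}^{f_i}\big)$; by the Zelevinsky classification recalled in \S\ref{21}, it is co-Whittaker---with the generic irreducible constituent appearing as a quotient as required by formula \eqref{desc}---precisely when the chosen ordering of Levi factors satisfies the non-precedence condition of \eqref{rdnp} at $y$. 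At any generic point of $Y$ no precedence relations hold between the twisted factors, so Lemma \ref{coW open} renders $\widetilde{\frak W}$ co-Whittaker on a dense open subset of $Y$.

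To extend the co-Whittaker property to all of $Y$ and then to descend, I cover $Y$ by opens $Y^\tau$ indexed by permutations $\tau \in W(M,\frak s) = \prod_i S_{m_i}$ on which the $\tau$-reordered parabolic induction $\widetilde{\frak W}^\tau$ is fibre-wise co-Whittaker. On each overlap $Y^\tau \cap Y^{\tau'}$, both $\widetilde{\frak W}^\tau$ and $\widetilde{\frak W}^{\tau'}$ are torsion-free co-Whittaker modules whose fibres at every generic point coincide with the irreducible generic representation, so Lemma \ref{415} yields gluing data up to a line bundle. The resulting object $\widetilde{\frak W}'$ is co-Whittaker on $Y$ with the fibres prescribed by \eqref{desc}. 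A $W(M,\frak s)$-equivariant structure on $\widetilde{\frak W}'$, natural at generic points and extended to all of $Y$ by Lemma \ref{415}, then allows descent to $\frak W_{[\frak s]}$ on $\frak X_{[\frak s],K}$; Galois descent and taking disjoint unions over inertial classes produce $\frak W$ over $\frak X_n$. Uniqueness up to a line bundle with trivial $G$-action follows immediately from another application of Lemma \ref{415}, whose fibre-matching hypothesis is secured by \eqref{desc} at generic points.

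The main obstacle is that parabolic induction is not literally symmetric under permutation of its inducing data in the presence of precedence relations: the local candidates $\widetilde{\frak W}^\tau$ therefore agree only up to line-bundle twist, forcing the gluing across $Y^\tau$'s and the construction of $W(M,\frak s)$-descent data to be verified cocycle-compatible via repeated use of Lemma \ref{415}. These verifications, rather than the local computations, form the heart of the argument and also explain the ambiguity of $\frak W$ up to a line bundle with trivial $G$-action.
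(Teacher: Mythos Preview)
Your strategy matches the paper's almost exactly: cover each Bernstein component by opens indexed by orderings of the Levi factors on which the parabolic induction has the right fibre description, glue, then descend (first along the $W(M,\frak s)$-quotient, then along Galois). Uniqueness via Lemma~\ref{415} is also what the paper does.

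The one substantive difference is the gluing mechanism. The paper factors each local induction as in the second equality of \eqref{Ww}, namely
\[
{\frak W}_{w}\;=\;{\rm I}_{P'}^{G}\Bigl(\bigotimes_{i}\sigma_{i}\otimes{\rm I}_{P_{i}}^{G_{m_{i}}}(\chi_{w_{i}1}^{f_{i}}\otimes\cdots\otimes\chi_{w_{i}m_{i}}^{f_{i}})\Bigr),
\]
so the only permutation ambiguity lives in the inner \emph{unramified} principal series of $G_{m_{i}}$. These carry canonical spherical vectors, and the paper normalises the intertwiners $T_{ww'}$ by requiring that they match spherical vectors. This pins down each $T_{ww'}$ uniquely, whence the cocycle condition for the faithfully-flat descent datum is automatic, as is the $W(M,\frak s)$-equivariance. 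You instead invoke Lemma~\ref{415} on overlaps, which only produces isomorphisms up to a line bundle $L_{\tau\tau'}=\underline{\Hom}(\widetilde{\frak W}^{\tau'},\widetilde{\frak W}^{\tau})$. The cocycle relation $L_{\tau\tau''}\cong L_{\tau\tau'}\otimes L_{\tau'\tau''}$ does hold (it is just composition of Hom-sheaves between rank-one objects), so a glued module exists up to a global twist, and the same device handles the $W(M,\frak s)$-descent; but this is precisely the verification you flag as ``the heart of the argument'' without carrying it out. The paper's spherical-vector normalisation is the concrete trick that makes these checks disappear and justifies the word \emph{explicit} in the proposition; your route is sound but softer, yielding the object only up to twist from the outset rather than constructing a preferred representative first.
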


\begin{proof}[{Proof of Proposition \ref{tre}}] Uniqueness is clear from Lemma \ref{415}.

 Let 

$$\frak X_{[\frak s]}\subset \frak X_{n, \baar \Q}$$
be a connected component.
Choose (forgetting the  notation used in the statement of the proposition) a representative in $\frak s$ of the form $\sigma=\otimes\sigma_{i}^{m_{i}}$ with the $\sigma_{i}$ pairwise inertially-inequivalent representations of $G_{d_{i}}$.  If the set of unramified characters of $G_{d_{i}}$ stabilising $\sigma_{i}$ consists of those valued in $\mu_{f_{i}}$, we have 
$$\frak X_{[\frak s]} \cong \prod_{i} {\bf G}_{m}^{m_{i}} /(S_{m_{i}}\rtimes  \mu_{f_{i}}^{m_{i}}) .$$
Cover $\prod_{i}{\bf G}_{m}^{m_{i}}$ by open sets $\prod_{i} U_{w_{i}}$ for $w=(w_{i})\in  \prod_{i } S_{m_{i}}$, such that $(\chi_{1}, \ldots, \chi_{m_{i}})\in U_{i, w}$ if and only if for all $\ell_{1}<\ell_{2}$, $\chi_{i,w_{i}\ell_{1}}^{f_{i}}$ does not follow $\chi_{i, w_{i}\ell_{2}}^{f_{2}}$.   (Similarly to \eqref{sigx}, we identify a point of $x\in {\bf G}_{m}$ with the  unramified character $g\mapsto x^{v_{F}(\det g)}$, for any $G_{d}$.)
Let $P\subset G$ be a parabolic with Levi $\prod_{i}G_{d_{i}}^{m_{i}}$. Then we define a smooth  $\OO_{U_{w}}[G]$-module
\beq \lb{Ww} {\frak W}_{w}:= {\rm I}_{P}^{G} \left(\bigotimes_{i } (\sigma_{i}\chi_{ w_{i}1}^{f_{i}}\ot\cdots \ot \sigma_{i}\chi_{ w_{i}m_{i}}^{f_{i}})\right)
={\rm I}_{P'}^{G} \left( \bigotimes_{i}  \sg_{i} \ot {\rm I}_{P_{i}}^{G_{m_{i}}} (\chi_{w_{i}1}^{f_{i}} \ot\cdots  \ot  \chi_{w_{i}m_{i}}^{f_{i}})\right).
\eeq
Here $P'\supset P$ is a parabolic with Levi $\prod G_{d_{i}m_{i}}$ and $P_{i}\subset G_{m_{i}}$ is a maximal parabolic, and the second  equality  in \eqref{Ww} is the compatibility of parabolic induction in stages for $P\subset P'\subset G$.

Along $U_{ww'}$, there are unique isomorphisms $T_{w_{i}w_{i}'}\colon  {\rm I}_{P_{i}}^{G_{m_{i}}} (\chi_{w_{i}1}^{f_{i}} \ot\cdots  \ot  \chi_{w_{i}m_{i}}^{f_{i}}) \to  {\rm I}_{P_{i}}^{G_{m_{i}}} (\chi_{w'_{i}1}^{f_{i}} \ot\cdots  \ot  \chi_{w'_{i}m_{i}}^{f_{i}})$ for al l $i$,  normalised by sending $v_{0}$ to $v_{0}'$ where  $v_{0}$, $v_{0}'$ are the spherical vectors corresponding to the function in the induced representation taking the  value $1$ at $g=1$.  They induce isomorphisms
 $$T_{ww'}\colon {\frak W}_{w|U_{w}\cap U_{w'}}\to   {\frak W}_{w'|U_{w}\cap U_{w'}}$$
which define a faithfully flat descent datum of $(\frak W_{w})_{w}$  to a module over  $\X_{[\frak s]}$, and, varying $\frak s$, to a $G_{n}$-module $\frak W_{\baar\Q}$ over $\frak X_{n, \baar{\Q}}$. 

Since the functors ${\rm I}_{P}^{G}$ are Galois-equivariant, $\frak W_{\baar\Q}$ descends to an $\OO_{\X_{n}}[G]$-module $\frak W$.
It is clear from the construction that  $\frak W_{}$ is finitely generated and satisfies \eqref{desc} at all  points, therefore $\frak W^{(n)}$ is a coherent sheaf, and since it has rank $1$ at every  point by \eqref{desc}, it is locally free of rank $1$. Finally, that every quotient of $\frak W$ is non-degenerate can also be checked on the fibres, which follows from the fact they are in the image of $\pi_{\rm gen}$ and the theory recalled in \S\ref{21}. This completes the proof that $\frak W$ is co-Whittaker.

\end{proof}

\begin{defi}\label{univ coW}  The module $\frak W$  constructed by the proposition is called the  universal co-Whittaker module over $\frak X_{n}$. 
\end{defi}

We note that in \cite{helm-whitt}, another object $\frak W'$  called the universal co-Whittaker module is defined as ${\text{c-Ind}}_{N}^{G}\psi$ (over the spectrum of some field lying over $\Psi$). The results of that paper would imply that, after the appropriate  base-change, $\frak W'$ and $\frak W$ are isomorphic. This comparison is not necessary for our purposes.

The definition is partially justified by the following result.

\begin{lemm}\lb{is univ} Let $X$ be a reduced Noetherian scheme equipped with a morphism $\alpha\colon X\to {\frak X}_{n}$. Suppose given, for each generic point $\eta$ of an irreducible component of $X$, a co-Whittaker module $V_{\eta}$ over $K(\eta)$  of the form $\pi_{\rm gen}({\bf s})$ 
with supercuspidal support $\alpha(\eta)$. Then there exist:
\begin{itemize}
\item
a finitely generated,  torsion-free co-Whittaker $\OO_{X}[G_{S}]$ module $V$ over $X$, unique up to tensoring with an invertible sheaf on $X$ with trivial $G$-action, such that $V_{|\eta}=V_{\eta}$ for all $\eta$.
\item 
a surjective morphism of $\OO_{X}[G]$-modules $\alpha^{*}\frak W \to V$.
\end{itemize}
\end{lemm}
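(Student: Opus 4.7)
The plan is to construct $V$ as a quotient of $\alpha^{*}\frak W$.  First, one verifies that $\alpha^{*}\frak W$ is itself a finitely generated co-Whittaker $\OO_{X}[G_{S}]$-module: it is pulled back from the universal $\frak W$, so by the pointwise description in Proposition \ref{tre} its fibre at each $x\in X$ is $\pi_{\rm gen}((\pi_{\rm ss}^{-1}(\alpha(x)),0))$, and the fibrewise arguments used to verify co-Whittakerness of $\frak W$ in the proof of Proposition \ref{tre} apply verbatim; in particular $(\alpha^{*}\frak W)^{(n)}$ is invertible on $X$.

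For each generic point $\eta$ of $X$, write $V_{\eta}=\pi_{\rm gen}((r_{\eta},N_{\eta}))$ with $r_{\eta}=\pi_{\rm ss}^{-1}(\alpha(\eta))$ and $N_{\eta}$ some nilpotent operator.  Since $0\preceq N_{\eta}$, Proposition \ref{ss'} supplies a surjection
$$f_{\eta}\colon (\alpha^{*}\frak W)_{|\eta}=\pi_{\rm gen}((r_{\eta},0))\twoheadrightarrow V_{\eta},$$
canonical up to a $K(\eta)^{\ts}$-scalar.  Letting $j_{\eta}\colon \Spec K(\eta)\to X$ denote the inclusion, I would define $V$ to be the image of the natural $\OO_{X}[G_{S}]$-linear map
$$\alpha^{*}\frak W\ \to\ \prod_{\eta}(j_{\eta})_{*}V_{\eta},$$
whose $\eta$-component is the composition of the restriction $\alpha^{*}\frak W\to (j_{\eta})_{*}(\alpha^{*}\frak W)_{|\eta}$ with $(j_{\eta})_{*}f_{\eta}$.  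The surjection $\alpha^{*}\frak W\twoheadrightarrow V$ is built in, and the scalar ambiguity in each $f_{\eta}$ only affects the isomorphism class of $V$.

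To verify the asserted properties, finite generation passes to quotients.  Since the stalk of $(j_{\eta})_{*}V_{\eta}$ at $x\in X$ is $V_{\eta}$ when $x$ lies in the closure of $\eta$ and zero otherwise, one reads off $V_{|\eta}=V_{\eta}$.  Torsion-freeness over $X$ is immediate from reducedness, the non-zero-divisors in each local ring $\OO_{X,x}$ being precisely those outside every minimal prime and thus acting nontrivially on each surviving factor $V_{\eta}$.  For co-Whittakerness, exactness of the top derivative functor makes $V^{(n)}$ a quotient of the line bundle $(\alpha^{*}\frak W)^{(n)}$ whose kernel vanishes at every generic point (since $V_{\eta}^{(n)}$ is one-dimensional), hence the kernel is zero and $V^{(n)}$ is invertible; moreover, the image of $\frak J(\alpha^{*}\frak W)$ inside $V$ is a $P_{S}$-submodule with the same top derivative $V^{(n)}$ as $V$, hence lies in $\frak J(V)$ and generates $V$ over $\OO_{X}[G_{S}]$.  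Uniqueness up to an invertible sheaf with trivial $G_{S}$-action follows from Lemma \ref{415} applied with $\Sigma$ equal to the dense set of generic points.

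The main subtlety lies in gluing the fibrewise surjections $f_{\eta}$ across generic points which may carry inequivalent multisegments; this is sidestepped by working inside the product $\prod_{\eta}(j_{\eta})_{*}V_{\eta}$ rather than attempting to construct a coherent kernel submodule of $\alpha^{*}\frak W$ directly, and by appealing to Lemma \ref{415} to absorb the remaining scalar indeterminacies into a line bundle twist.
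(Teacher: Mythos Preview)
Your proof is correct and follows essentially the same route as the paper: define $V$ as the image of $\alpha^{*}\frak W\to\prod_{\eta}V_{\eta}$ via the surjections supplied by Proposition~\ref{ss'}, then verify the co-Whittaker conditions using that $V$ is a quotient of the co-Whittaker module $\alpha^{*}\frak W$ and that $V^{(n)}$ is a quotient of the invertible sheaf $(\alpha^{*}\frak W)^{(n)}$ with trivial kernel on a reduced scheme. The only cosmetic difference is that the paper checks nondegeneracy of quotients directly (condition~1 of Lemma~\ref{schwartz}) whereas you check the equivalent condition~2 via $\frak J$; your additional remarks on torsion-freeness and the gluing subtlety are sound but not needed beyond what the paper records.
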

\begin{proof} Uniqueness follows from Lemma \ref{415}. 

Write $V_{\eta}= \pi_{\rm gen}((r, N)) $ where $\pi_{\rm ss}(r)$ is the supercuspidal support ${\bf s}$. By construction of $\frak W$  and Proposition \ref{ss'}, there is a surjection $\frak W_{|\eta}\ \to V_{\eta}$.

 Let $\frak W_{X}:= \alpha^{*}\frak W$ and let $V$ be the image of the  map 
$$\frak W_{X}\to \prod_{\eta} V_{\eta}.$$ 
We verify that $V$, which is clearly finitely generated,  is co-Whittaker (cf.  \cite[proof of Lemma 6.4]{helm-whitt}): every quotient of $V$ is also a quotient of the co-Whittaker module $\frak W$, hence nondegenerate; and $V^{(n)}$ is coherent, cyclic, and nonzero at all minimal primes of $X$, hence it is locally free of rank one. 
\end{proof}

\subsection{Local Langlands correspondence in families} We readopt the semiglobal notation of \S\ref{s4.1}.  We denote by $W_{F,S}:=\prod_{v\in S} W_{F_{v}}$.
  A Weil--Deligne representation of $W_{F,S}$ of dimension $n$ s a collection $r'_{S}=(r_{v}')_{v\in S}$ where $r'_{v}$ is a Weil--Deligne representation of $W_{F,v}$ of dimension $n$. We denote by $\pi_{{\rm gen},v}$ the local Langlands correspondence \eqref{pigen} for the field $F_{v}$, and extend it to a map from $n$-dimensional  Frobenius-semisimple Weil--Deligne representations of $W_{F,S}$ to smooth $G_{n,S}$-representations by
  $$\pi_{\rm gen}(r_{S}):= \ot_{v\in S} \pi_{{\rm gen}, v} (r'_{v}).$$

\begin{theo}\label{theo-LLF} Let $K$ be a field of characteristic zero and let  $r'_{S}=(r_{S}, N_{S})$ be an rank-$n$ Weil--Deligne representation of ${W}_{F,S}$ over a  reduced Noetherian  $K$-scheme  $X$.
 Then:
\begin{enumerate}
\item There exists a unique torsion-free strictly co-Whittaker
 $\OO_{X}[G_{S}]$-module  
$$\pi(r'_{S})$$
  such that for every irreducible component $X_{\eta}\subset X$ with generic point $\eta$, we have $$\pi(r'_{S})\otimes_{\OO_{X,\eta}} K(\eta)\cong  \pi_{{\rm gen}}(r'_{S|\eta}).$$ 
\item Moreover, for every $x\in X$ there is a surjection 
$$ \pi_{{\rm gen}}(r'_{S|x})\to  \pi(r'_{S})\otimes K(x),$$ unique up to multiplication by $K(x)^{\ts}$, which is an isomorphism
 if   for all $v\in S$, there exists a minimal prime $\eta$ of $\OO_{X,x}$ such that 
  $N_{v, \eta}\sim N_{v,x}$.
\end{enumerate}
\end{theo}

\begin{proof}
Uniqueness follows from   Lemma \ref{415}.  For existence, we reduce to the case where $S=\{v\}$ consists of a single prime: if $\pi(r'_{v})$ is as required by the statement with $S=\{v\}$, then  the maximal $\OO_{X}$-torsion-free
 quotient of $\otimes_{v\in S}\pi(r'_{v})$ is as required by the statement in the general case. 

We thus fix $S=\{v\}$ and omit these  subscripts from the notation.
 By Theorem \ref{coarse}, the $\OO_{X}$-representation  $r$  of $W_{F}$ defines a map $\alpha \colon X\to \frak X_{n}$. Then $\pi(r)$ is given by Lemma \ref{is univ} applied to the collection $V_{\eta}=\pi_{\rm gen}(r_{|\eta})$.

We now turn to the second part, which is still reduced to the case  where $S$ is a singleton (omitted from the notation). By \cite[Proposition 4.3.7]{EH} we may assume that $K$ is algebraically closed.  We denote by $\frakp_{y}$ the prime of $\OO_{X,x}$ corresponding to a point $y\in \Spec \OO_{X,x}$.  Let $\pi(r')_{x}$, respectively $\pi(r')_{x, \eta}$ be the pullback of $\pi(r')$ to $\OO_{X,x}$, respectively $\OO_{X,x}/'\frakp_{\eta}$. 
 Let   $$\pi^{\perp}_{\rm gen}(r'_{|y}) := \Ker( \frak W_{\OO_{X,x}}\to \pi_{\rm gen}(r'_{|y}))=  \Ker( \frak W_{\OO_{X,x/\frakp_{y}}}\to \pi_{\rm gen}(r'_{|y})). $$

 Then
\beqq
\pi(r')_{x}&\cong  \frak W_{\OO_{X,x}}/\cap_{\eta} (\frakp_{\eta}   \frak W_{\OO_{X,x}} +\pi^{\perp}_{\rm gen}(r'_{|\eta})), \\
\pi(r'_{|x}) &\cong  \frak W_{\OO_{X,x}}/ (\frakp_{x}   \frak W_{\OO_{X,x}} +\pi^{\perp}_{\rm gen}(r'_{|x}).
\eeqq
The existence of a surjection $\pi_{\rm gen}(r_{|x}')\to \pi(r')_{|x}=\pi(r'\ot \OO_{X,x})_{|x}$ is equivalent to the inclusion   $(\frakp_{x}   \frak W_{\OO_{X,x}} +\pi^{\perp}_{\rm gen}(r'_{|x})\subset (\frakp_{\eta}   \frak W_{\OO_{X,x}} +\pi^{\perp}_{\rm gen}(r'_{|\eta}))$ for all minimal primes $\eta$, that is, to the existence of a surjection $\pi_{\rm gen} (r_{|x}')\to \pi(r'\ot \OO_{X,x}/\frakp_{\eta})_{|x}$. Once established the existence of those surjections, 
 we have a factorisation 
$$\pi_{\rm gen }(r_{|x}') \to \pi(r\ot \OO_{X,x})_{|x} \to  \pi(r\ot \OO_{X,x}/\frakp_{\eta})_{|x}$$ 
for each $\eta$, so that if the composition is an isomorphism then so is the first map. In conclusion, we have reduced the assertion to be proved to the case where $\OO_{X,x}$ is a local ring. We denote by $\eta$ its unique generic point.

Suppose without loss of generality that $\alpha(\Spec \OO_{X, x})\subset \frak X_{n}$ is contained in the image of $U_{w}\subset {\bf G}_{m} ^{m_{i}}$ for $w_{i}={\rm id}$ for all $i$.  Then $\pi_{\rm gen}(r'_{\eta})=\pi({\bf s}_{\eta})$ where ${\bf s}_{\eta}$ is the multisegment 
$${\bf s }_{\eta}:= \{\bigotimes_{j=1}^{\ell_{i}} \sg_{i}\chi_{i+t_{j} |\eta}^{f_{i}} \ot \Delta(\one,t_{i, j})\}_{i}.$$
 The segment $\Delta(\one, t_{i,j})$ is a segment defined over $\Q$, and each $\sg_{i}$ is defined over $K$. Let
 $$\pi^{\circ}:= {\rm Ind}_{P}^{G} (\sg_{i} \chi_{i+t_{j}}^{f_{i}} \ot \Delta(\one,t_{i, j}))_{i},$$
a representation over $U_{w}$ which descends to $\frak X_{n, K}$ and can be pulled back to $\OO_{X,x}$; we denote by $\pi^{\circ}_{x}$ the pullback. The representation $\pi^{\circ}_{x}$ is manifestly finitely generated, torsion-free, and co-Whittaker with generic fibre $\pi_{\eta}$. By Lemma \ref{415}, $\pi^{\circ}_{x}\cong \pi(r)_{x}$. Its fibre at $x$ is $\pi((r_{|x}, N'_{|x}))=\pi_{\rm gen}({\bf s}_{x}')$, where $N_{|x}'$ is a monodromy operator on $r_{|x}$ which satisfies $N_{|x}'\sim N_{\eta}$. 
Since $N_{|x}\preceq N_{|\eta}$, by Proposition \ref{ss'} there is  a surjection
$$\pi_{\rm gen}(r_{|x}')= \pi_{\rm gen}((r_{|x}, N_{|x}))\to  \pi_{\rm gen}((r_{|x}, N_{|x}'))= \pi^{\circ}_{|x} =\pi(r)_{|x}, $$
which is unique up to scalars, and  an isomorphism if and only if $N_{|x}\sim  N_{|x}'\sim N_{\eta}$. 
\end{proof}

\begin{defi} The association
$$r'_{S}\mapsto \pi(r_{S}')$$
of Theorem \ref{theo-LLF} is called the local Langlands correspondence in families.
 When  $X=\Spec A$ is the spectrum of a $p$-adic ring  as defined above Lemma \ref{l-mon}, and $\rho_{S}$ is a continuous representation of $\prod_{v\in S}\Gal(\baar{F}_{v}/F_{v})$ over $X$, we define
$$\pi(\rho_{S}):= \pi(r_{S}'),$$
where $r_{S}'=(r_{v}')$ with $r_{v}'={\rm WD}(\rho)$.
\end{defi}

\begin{theo}\label{recognise}    Let $K$ be a field of characteristic zero and let $X$ be a reduced Noetherian scheme over $K$. Let $r_{S}'=(r_{S}, N_{S})$ be an $n$-dimensional Weil--Deligne representation of $W_{F, S}$ over $X$, and let  $ V$ be a smooth admissible finitely generated torsion-free $\OO_{X}[G_{S}]$-module   such that $$V\otimes K(x)\cong \pi_{\rm gen}(r'_{S|x})$$
 for all $x$ in a dense set $\Sigma$ of  points of $X$. Then there exist an open subset $U\subset X$ containing $\Sigma$ and an invertible sheaf $L$ over $U$ such that 
$$V|_{U}\cong L^{-1}\otimes  \pi(r'_{S})|_{U}$$
 as $\OO_{U}[G]$-modules. Explicitly, $U$ can be taken to be the maximal  open subset of $X$ such that   $V|_{U}$ is co-Whittaker, and $L
 =\underline{\Hom}(V|_{U}, \pi(r'_{S})|_{U})$.
 \end{theo}
\begin{proof} This follows from Lemma \ref{415} using Lemma \ref{coW open}.
\end{proof}

\section{$L$-functions and zeta integrals in analytic families}\label{sec: Lz}

In this section, $F$ is a non-archimedean local field,  $G_{n}:=\GL_{n}(F)$, and $X$ is a reduced Noetherian scheme over a field $K$ of characteristic zero.
\subsection{$L$-functions} Recall that if $r'=(r, N)$ is a Weil--Deligne representation over $K$, the inverse of 
$$L(0,(r,N))^{-1}:= \det(1-\phi|(\Ker N)^{I_{F}} )\in K, $$
when defined, is by definition  the value at $0$ of the $L$-function of $r'$. For $s\in \Z$, $L(s, r'):= L(0, r'(s))$. 

Recall also that there is a notion of $L$-function attached to  a smooth admissible irreducible representations $V$ of $G_{n}$ over $K$. For our purposes, we may use as definition  its compatibility with the (rational) Langlands correspondence:
$$L(s+(1-n)/2, V)^{-1}:= L^{-1}(s, \pi^{-1}(V))^{-1} \in K, \qquad s\in \Z.$$
Similarly, we may define the Rankin--Selberg  $L$-function of a pair of smooth admissible irreducible representations  representations $V_{1}$ of $G_{n_{1}}$, $V_{2}$ of $G_{n_{2}}$ by
$$L (s+(1-n_{1}n_{2})/2,V_{1}\times V_{2})^{-1} := L(s,\pi^{-1}(V)\otimes \pi^{-1}(V_{2}))^{-1} \in K, \qquad s\in \Z.$$

The following result is obvious.
\begin{prop}\label{interp Lv} Let $(r, N)$ be a  Weil--Deligne representation of $W_{F}$ over  $X$. Assume that   $N$ is  locally constant on $X$. Then 
$$L^{-1}((r,N)):= \det(1-\phi|(\Ker N)^{I_{F}} )\in     \OO(X), $$
defines a function satisfying
 $$L^{-1}((r, N))(x)= L(0, (r_{x}, N_{x}))^{-1}$$
 for all $x\in X$. 
\end{prop}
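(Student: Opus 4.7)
The plan is to reduce the claim to the local freeness of the relevant kernel under the locally-constant-monodromy hypothesis, after which the pointwise identity follows because the formation of kernels of maps of locally free sheaves whose kernels are locally free commutes with base change.

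First I would write $M$ for the underlying locally free $\OO_X$-module of $(r, N)$. By Lemma \ref{CHT} (applied over $\baar K$) together with the argument of Proposition \ref{nowhere dense}, for each $I_F$-type $\tau$ the $\OO_X$-module $\Hom_{I_F}(\tau, M)$ is locally free and carries a nilpotent endomorphism $N_\tau$ induced by $N$. Specialising to $\tau = \one$ gives that $M^{I_F} = \Hom_{I_F}(\one, M)$ is locally free. The assumption that $N$ is locally constant on $X$ says precisely that the coherent $\OO_X$-modules $M_{\tau, i} := \ker N_\tau^i$ all have locally constant rank; taking $\tau = \one$ and $i = 1$ shows that $(\ker N)^{I_F} = \ker N|_{M^{I_F}} = M_{\one, 1}$ is coherent of locally constant rank, hence locally free, since $X$ is reduced and Noetherian.

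Next, because $I_F$ is normal in $W_F$ and $N \phi = q \phi N$, the Frobenius $\phi$ preserves both $M^{I_F}$ and $\ker N$ and therefore acts on $(\ker N)^{I_F}$. The determinant
\beqq
L^{-1}((r, N)) := \det(1 - \phi \mid (\ker N)^{I_F}) \in \OO(X)
\eeqq
is thus a well-defined global function. It remains to verify that this formation commutes with specialisation at a point $x \in X$: both $M^{I_F} \subset M$ and $(\ker N)^{I_F} \subset M^{I_F}$ are kernels of maps of locally free sheaves whose kernel and cokernel are themselves locally free (equivalently, of locally constant rank), so they remain kernels after the base change $\otimes_{\OO_X} K(x)$. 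This yields a $\phi$-equivariant identification $(\ker N)^{I_F} \otimes K(x) \cong (\ker N_x)^{I_F}$, from which the asserted equality $L^{-1}((r,N))(x) = L(0, (r_x, N_x))^{-1}$ is immediate from the definition.

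There is no serious obstacle here — as the author indicates, the statement is essentially a consistency check. The only substantive input is the implication \emph{locally constant monodromy} $\Rightarrow$ \emph{$M_{\tau, i}$ locally free}, which is built into Proposition \ref{nowhere dense}; everything else is the standard fact that kernels of maps of locally free sheaves with locally free kernel commute with arbitrary base change.
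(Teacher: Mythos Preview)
Your argument is correct. The paper gives no proof at all, simply declaring the result ``obvious'' before the statement; your write-up supplies precisely the verification that is being taken for granted. The substantive point --- that under the locally-constant-monodromy hypothesis $(\ker N)^{I_F}=\ker(N|_{M^{I_F}})$ is locally free and its formation commutes with specialisation --- is exactly what you extract from the $\tau=\one$, $i=1$ case of the rank condition together with the standard fact about coherent sheaves of constant fibre rank on reduced Noetherian schemes. One cosmetic remark: rather than presenting $M^{I_F}\subset M$ as the kernel of a map of locally free sheaves, it is slightly cleaner to note that it is a direct summand (via the averaging idempotent over the finite quotient of $I_F$ that acts), which immediately gives local freeness and base-change compatibility; but this is a matter of phrasing, not a gap.
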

\begin{rema}\label{non-constant mon}  By Proposition \ref{nowhere dense},  any Weil--Deligne representation $r'=(r, N)$ over $X$ has locally constant monodromy  over a dense open subset of $X$ containing all pure specialisations.  

 If $(r,N)$ and $(r, N')$ are Weil--Deligne representations over $X$ such that $N\preceq_{I_{F}}N'$ (that is $N_{x}\preceq_{I_{F}}N'_{x}$ for all $x\in X$), and $X'\subset X$ is a locally closed subset over which both $N$ and $N'$ are locally constant, then $L^{-1}((r, N')|_{X'})  $ divides $L^{-1}((r, N)|_{X'})$ in $\OO(X')$. 
\end{rema} 

Suppose that  $\nu=\pi^{-1}_{I_{F}}([ \frak s])$ is an inertial type and $\frak X_{\nu}=\frak X_{[ \frak s]}\subset \frak X_{n, \baar\Q}$ is the corresponding component of the Bernstein variety, and let $\frak X_{\nu}^{\square}$ be the $ \frak X_{\nu}$-scheme carrying the universal $W_{F}$-representation $r_{\nu}$ of type $\nu$. Then $L^{-1}((r_{\nu}, 0))\in \OO(\frak X_{\nu}^{\square})$ descends to $\frak X_{\nu}$; by glueing we obtain a regular  function over all of $\frak X_{n, \baar\Q}$ which is Galois-equivariant, hence descends to a function 
 $$L_{\rm ss}^{-1}\in \OO(\frak X_{n}).$$

Similarly, we define
$$L^{-1}\in \OO(\frak X_{n}')$$
by first descending $L^{-1}((r, N)_{[\frak s],[t]}) $ via  $\frak X_{[\frak s], [t]}^{\square}\to \frak X_{[\frak s], [t]}$ (with notation as in  \eqref{univ wd}), then glueing to $\frak X_{n, \baar\Q}'$ and finally observing Galois-equivariance to descend to  $\frak X_{n}'$.

 By the compatibility of the local Langlands correspondence with $L$-functions (or by our definitions) , for each $x\in \frak X'_{n}(\C)$ corresponding to  a Well--Deligne representation $r'=(r, N)$, we have 
   $$L^{-1}_{\rm ss}(x)=L({(1-n)/ 2} ,\pi((r, 0)))^{-1}, 
   \quad L^{-1}(x)=L({(1-n)/ 2} ,\pi(r'))^{-1}, 
   $$

\subsubsection{Rankin--Selberg $L$-functions}  Let $n_{1}$, $n_{2}\geq 1$. We aim to  define  Rankin--Selberg $L$-functions $L_{\rm ss}^{-1}(\cdot , {\rm RS})$ on $\frak X_{n_{i}}\times \frak X_{{n_{2}}}$ and   $L^{-1}(\cdot , {\rm RS})$ on $\frak X'_{n_{1}}\times \frak X'_{n_{2}}$.

  First we construct  maps  ${\rm RS}\colon\frak X_{n_{1}}\times \frak X_{n_{2}}\to \frak X_{n_{1}n_{2}}$ and ${\rm RS}'\colon\frak X'_{n_{1}}\times \frak X'_{n_{2}}\to \frak X'_{n_{1}n_{2}}$.  We define the latter and the former is easier. After base-change to $\baar\Q$ we may restrict to components $\frak X_{[\frak s_{1}], [t_{1}]}\times \frak X_{[\frak s_{2}], [t_{2]}}$ corresponding to inertial types $\nu_{i}=\pi_{I}^{-1}(\frak s_{i})$  dimensions $n_{i}$ and classes of multipartitions $[t_{i}]$. Then we have a map ${\frak X}_{[\frak s_{1}], [t_{1}]}^{\square}\times\frak X_{[\frak s_{2}], [t_{2}]}^{\square}\to \frak X'_{n_{1}n_{2}}$  given, via Theorem \ref{coarseWD}, by the representation $(r,N)_{[\frak s_{1}], [t_{1}]}\otimes (r, N)_{[\frak s_{2}], [t_{2}]}$ (with the notation of \eqref{univ wd}). It is clear that this map factors through ${\frak X}_{[\frak s_{1}], [t_{1}]}\times\frak X_{[\frak s_{2}], [t_{2}]}$   and that the map obtained by glueing to all of $\frak X'_{n_{1}, \baar\Q}\times \frak X'_{n_{2}, \baar\Q}$ is Galois-equivariant and hence descends to ${\rm RS}\colon\frak X'_{n_{1}} \times \frak X'_{n_{2}}\to \frak X'_{n_{1}n_{2}}$. 
  
We can now define 
  $$L_{\rm ss}^{-1}(\cdot , {\rm RS})= {\textsc{RS}}^{*}  L_{\rm ss}^{-1},$$
    $$L^{-1}(\cdot , {\rm RS}')= {\textsc{RS}'}^{*}  L^{-1}.$$
By construction, the latter interpolates the usual $L$-functions at all points of $\frak X'_{n_{1}} \times \frak X'_{n_{2}}$.
  
For $i=1,2$ let $V_{i}=\pi(r')$ be an $\OO_{X}[G]$ module  in the essential image of the local Langlands correspondence in families of Theorem \ref{theo-LLF}.
  By  Theorem \ref{coarse}, there are maps $\alpha_{i}\colon X\to \frak X_{n_{i}}$.
    Let 
$$L_{\rm ss}^{-1}  ((1-n_{1}n_{2})/2,V_{1}\times V_{2}) :=(\alpha_{1}\times \alpha_{2})^{*}L_{\rm ss}^{-1}(\cdot, {\rm RS}).$$
Let $V_{2}({T})$ be the twist of $V_{2}$ by $\chi_{T}\circ \det$, where $\chi_{T}\colon F^{\times}\to \Q[T^{\pm1}]^{\times}$ is the unramified character $x\mapsto T^{v_{F}(x)}$.    Then we denote 
$$L_{\rm ss}^{-1}((1-n_{1}n_{2})/2,V_{1}\times V_{2}, T):=L_{\rm ss}^{-1}((1-n_{1}n_{2}/2),V_{1}\times V_{2}({T}))$$
and
$$L_{\rm ss}^{-1}(m+(1-n_{1}n_{2})/2,V_{1}\times V_{2}):=L_{\rm ss}^{-1}((1-n_{1}n_{2}/2), V_{1}\times V_{2},{q^{-m}}))$$

If moreover $V_{i}\cong \pi(r_{i}')$ for representations $r_{i}'$ with   locally constant monodromy, the maps $\alpha_{i}$ factors through maps $\alpha_{i}'\colon\X_{n_{i}}'$ and we can define $L^{-1}(\cdot , V_{1}\ts V_{2}) $ similarly to the previous paragraph.
\begin{prop}\label{RSLF} Let  $V_{i}=\pi_{i}((r_{i}, N_{i}))$ for some Weil--Deligne representations with locally constant monodromy over $X$. Then for each $m\in (1-n_{1}n_{2})/2 +\Z$ there is a unique element
$$L^{-1}  (m,V_{1}\times V_{2})\in \OO(X)$$
whose value at any $x\in X$ equals $L(m, V_{1,x}\times V_{2,x})^{-1}.$
\end{prop}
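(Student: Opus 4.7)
My plan is to take the construction $L^{-1}(\cdot, V_1\times V_2)$ already introduced in the paragraph preceding the proposition and verify the two claims: uniqueness, and pointwise interpolation. Uniqueness is automatic: $X$ is reduced, so a regular function on $X$ is determined by its values at all points. For existence, by assumption both $V_i$ arise as $\pi(r_i')$ for $r_i'=(r_i,N_i)$ with locally constant monodromy, hence by Theorem \ref{coarseWD} the $r_i'$ induce morphisms $\alpha_i'\colon X\to \frak X'_{n_i}$. Composing with the Rankin--Selberg map ${\rm RS}'\colon \frak X'_{n_1}\times \frak X'_{n_2}\to \frak X'_{n_1n_2}$ and pulling back the function $L^{-1}\in\OO(\frak X'_{n_1n_2})$ yields, up to the twist handling $m$, the claimed $L^{-1}(m,V_1\times V_2)\in\OO(X)$.

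The heart of the argument is the verification of the interpolation. For $x\in X$ we have $\alpha_i'(x)\in \frak X'_{n_i}$ is (by the definition of $\alpha_i'$ via Theorem \ref{coarseWD}) the isomorphism class of $r'_{i,x}=(r_{i,x},N_{i,x})$. From the very construction of ${\rm RS}'$ via the universal representations $(r,N)_{[\frak s_i],[t_i]}$ of \eqref{univ wd} and their tensor products, the image ${\rm RS}'(\alpha_1'(x),\alpha_2'(x))\in\frak X'_{n_1n_2}$ is the class of $r'_{1,x}\otimes r'_{2,x}$. Since by the construction of $L^{-1}\in\OO(\frak X'_n)$ its value at such a point is $L(0,r'_{1,x}\otimes r'_{2,x})^{-1}$, the compatibility of the rational local Langlands correspondence with Rankin--Selberg $L$-factors gives $L^{-1}(\cdot,V_1\times V_2)(x)=L((1-n_1n_2)/2,V_{1,x}\times V_{2,x})^{-1}$, which is the assertion at $m=(1-n_1n_2)/2$.

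To obtain arbitrary $m\in (1-n_1n_2)/2+\Z$, I use the twist $V_2(T)$ defined before the proposition: specialising $T=q^{-k}$ for $k\in\Z$ corresponds on the Weil--Deligne side to tensoring $r_2'$ by the unramified character $|\ |^{k}$, which preserves locally constant monodromy and the inertial type, so $V_2(q^{-k})$ remains in the image of the correspondence in families. Hence the function $L_{\rm ss}^{-1}(m+(1-n_1n_2)/2,V_1\times V_2):=L^{-1}((1-n_1n_2)/2,V_1\times V_2(q^{-m}))$ is well defined and, by the previous paragraph applied to the pair $(V_1,V_2(q^{-m}))$, interpolates $L(m+(1-n_1n_2)/2,V_{1,x}\times V_{2,x})^{-1}$ at every $x\in X$.

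The only step requiring any care is the verification that the tensor product of the universal Weil--Deligne representations behaves well enough for ${\rm RS}'$ to be well defined on $\frak X'_{n_1}\times \frak X'_{n_2}$ with values in $\frak X'_{n_1n_2}$; this however is already subsumed in the construction of ${\rm RS}'$ given just above the statement, which uses that over a connected $\square$-component the inertial type and the Jordan class of the monodromy of the tensor product are constant, so Theorem \ref{coarseWD} applies to produce the required morphism. Given this, the whole proof amounts to chasing through the compatibilities and invoking the definition of $L^{-1}$ at the level of the extended Bernstein variety.
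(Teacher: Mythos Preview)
Your approach is essentially identical to the paper's: define the element as $(\alpha_1'\times\alpha_2')^*L^{-1}(\cdot,{\rm RS}')$ using the maps $\alpha_i'\colon X\to\frak X'_{n_i}$ from Theorem \ref{coarseWD}, then handle general $m$ by twisting $V_2$ by the unramified character $T=q^{-k}$. The paper's proof is terser (it does not spell out uniqueness or the pointwise interpolation check, treating both as implicit in the construction), whereas you make these verifications explicit.

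One minor slip: in your final displayed definition you write $L_{\rm ss}^{-1}$ on the left-hand side where you mean $L^{-1}$ (the whole point is that you are working over $\frak X'_{n_i}$, not $\frak X_{n_i}$). Also be careful with the variable naming: you use $m$ both for the target argument in $(1-n_1n_2)/2+\Z$ and for the integer shift; the paper separates these as $m$ and $k$ with $m=k+(1-n_1n_2)/2$.
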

\begin{proof} Define first 
$$L^{-1}  ((1-n_{1}n_{2})/2,V_{1}\times V_{2}) := L^{-1}((r_{1}, N_{1})\otimes (r_{2}, N_{2}))= (\alpha_{1}'\times \alpha_{2}')^{*}L^{-1}(\cdot, {\rm RS}),$$
where 
$\alpha'_{i}\colon X\to \frak X_{n_{i}}'$ are the  maps given by Theorem \ref{coarseWD}. Let 
$$L^{-1}((1-n_{1}n_{2})/2,V_{1}\times V_{2}, T):=L^{-1}((1-n_{1}n_{2})/2,V_{1}\times V_{2}({T})),$$
then for any $k\in \Z$ the desired element is
$$L^{-1}(k+(1-n_{1}n_{2})/2,V_{1}\times V_{2}):=L^{-1}((1-n_{1}n_{2})/2 , V_{1}\times V_{2},{q^{-k}}).$$
\end{proof}

\subsection{Zeta integrals} We denote by $N_{n}\subset G_{n}$ the upper-triangular unipotent subgroup, by $A_{n}\subset G_{n}$ the diagonal torus, and by $K_{n}\subset G_{n}$ the maximal compact subgroup $\GL_{n}(\OO_{F})$. The Iwasawa decomposition asserts $G_{n}=N_{n}A_{n}K_{n}$.  We fix the following choices of measures. On $F$, we take the Haar measure $dx$ assigning volume $1$ to $\OO_{F}$ (then $|d_{F}|^{1/2}dx$ is the self-dual Haar measure for additive characters of level $0$, where $d_{F}\in F$ is a generator of the different ideal). On $F^{\times}$, we take the measure $d^{\times} x = \zeta_{F}(1) {dx \over |x|}$. On $G_{n}$, we take the measure $dg=\zeta_{F}(1){\prod_{i,j=1}^{n}dx_{ij}\over|\det g|^{n}}$ if $g=(x_{ij})$. On $N_{n}$, we take the measure $dn=\prod_{i>j} dx_{ij}$ if $n=(x_{ij})$. On $A_{n}$, we take the measure $da= \prod_{i=1}^{n} d^{\times}x_{ii}$ if $a=(x_{ij})$. On the quotient $N_{n}\bks G_{n}$ we take the quotient measure.

Let $X$ be a Noetherian $K$-scheme and let $V$ be an $\OO_{X}[G_{n}]$-module of Whittaker type. Let $\psi\colon F\to \OO(\Psi)^{\times}$ be the universal additive character of level zero. (We remove subscripts from the notation of \eqref{psiuniv}. The choice of the universal $\psi$ allows for specialising our treatment to any additive character of $F$ of level zero, and of course the restriction on the level could be removed.) By Proposition \ref{topder}.1,  the $\OO_{X\ts\Psi}$-module $(V^{(n)}\ot\OO(\Psi))^{\vee}$  is canonically isomorphic to $\Hom_{N_{n}}(V\ot\OO(\Psi),\psi_{}) \cong \Hom_{G_{n}}(V, {\rm Ind}(\psi))$. We denote by 
$$ \cW(V, \psi)$$ the image of the natural map 
$$(V^{(n)})^{\vee}\otimes_{\OO_{X}} V\otimes \OO(\Psi)\to {\rm Ind}_{N_{n}}^{G_{n}}\psi.$$

\subsubsection{Definition of the integrals}
For  $i=1,2$, let $V_{i}$ be an $\OO_{X}[G_{n_{i}}]$-module of Whittaker type as above, with $n_{2}\leq  n_{1}$. 
Let $W_{1}\in \W(V_{1}, \psi)$, $W_{2}\in \W(V_{2} , \psi^{-1})$, and  $m\in (1-n_{1}n_{2})/2+ \Z$, and when $n_{2}=n_{1}=n$, let $\Phi\in \mathcal{S}(F^{n}, \OO({X}))$ be a Schwartz function with values in $\OO({X})$. Denote by $G_{n}^{j}:=\{v_{F}(\det g)=j\}\subset G_{n}$. 
Define, in the respective cases $n_{2}<n_{1}$ and $0\leq k \leq n_{1}-n_{2}-1$; $n_{2}=n_{1}=n$; and $n_{2}\leq n_{1}$:
\beqq
 I^{j}_{k}(W_{1}, W_{2},m)&:=
\int_{M_{k, n_{2}}(F)}
  \int_{N_{n_{2}}\bks G_{n_{2}}^{j}}
   W_{1}\left(
   \begin{pmatrix}
   g  &&\\
   x &I_{k} & \\
   &&{I_{n_{1}-n_{2}-k}}
   \end{pmatrix}\right)
W_{2}(g) 
q_{F}^{-j(m+{-n_{1}  + n_{2}\over 2})} dg dx, \\
 I^{j}(W_{1}, W_{2},\Phi, m) &:= \int_{N_{n}\bks G_{n}^{j}} W_{1}(g) W_{2}(g)\Phi(e_{n}g)
q_{F}^{-jm} dg, \\
I_{(k)}(W_{1}, W_{2}, (\Phi), m, T)&:= \sum_{j} I_{(k)}^{j}(W_{1}, W_{2}, (\Phi), m) T^{j},
\eeqq
where we use the notation $I_{(k)}(W_{1}, W_{2}, (\Phi), m, T)$ with  parenthetical `$(\Phi), (k)$' whenever we want to treat uniformly the two cases $n_{2}<n_{1}$, $n_{2}=n_{1}$.  Our normalisation is such that when  $T=q^{-s}$ and for a suitable choice of $X$, the integral $I_{(0)}(W_{1}, W_{2}, (\Phi), m, T)$ is the zeta integral denoted by $I({s+m
}; W_{1}, W_{2}, (\Phi))$ in \cite[\S 3.2.1]{cps}.  If moreover $X=\Spec\C$, it is known that $I_{(k)}(W_{1}, W_{2}, (\Phi),m, q^{-s})$ converges in some right half-plane \cite[(2.7)]{JPPS}.

\begin{lemm} The series $I_{(k)}(W_{1}, W_{2},  (\Phi),m, T)$ belongs to $A\llb T\rrb [T^{-1}]\otimes \OO(\Psi)$.
\end{lemm}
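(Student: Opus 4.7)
The plan is to show (i) each coefficient $I^{j}_{(k)}(W_{1}, W_{2}, (\Phi), m)$ lies in $A \otimes \OO(\Psi)$ and (ii) the coefficients vanish for $j$ sufficiently small; these two facts together give the stated conclusion. Both rely on the classical support property of Whittaker functions along the diagonal torus: for $W \in \cW(V_i, \psi^{\pm 1})$ with $V_i$ of Whittaker type, writing $a = \mathrm{diag}(a_{1},\ldots,a_{n_i})\in A_{n_i}$, there is a constant $C_W$ such that $W(a) = 0$ unless $v_F(a_j) - v_F(a_{j+1}) \geq -C_W$ for each $j < n_i$. In the family setting this is deduced from the fiberwise classical statement via admissibility: $W$ is right-invariant under some compact open subgroup $U \subset K_{n_i}$, the invariants $V_i^U$ form a coherent $\OO_X$-module by Lemma \ref{is coh}, and coherence propagates the fiberwise bound uniformly over $X$.

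For (i), use the Iwasawa decomposition $G_{n_2} = N_{n_2} A_{n_2} K_{n_2}$ to identify $N_{n_2}\bks G_{n_2}^{j}$ with $A_{n_2}^{j} \times K_{n_2}$, the $K_{n_2}$-factor being compact. The constraint $\sum_i v_F(a_i) = j$ together with the Whittaker support bounds $v_F(a_i) - v_F(a_{i+1}) \geq -C$ coming from both $W_1$ and $W_2$ confines each $v_F(a_i)$ to a bounded interval as soon as one value is bounded below. In the equal-rank case $n_1 = n_2$, the Schwartz support of $\Phi$ provides $v_F(a_{n_2}) \geq -N$; in the rank-changing case $n_2 < n_1$, the analogous analysis of $W_1$ on the embedded matrix $\mathrm{diag}(g, I_k, I_{n_1 - n_2 - k})$, together with the right-smoothness of $W_1$ under the unipotent block containing $x$ (which restricts $x \in M_{k, n_2}(F)$ to a compact set), yields the same conclusion. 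Since the integrand is then a locally constant $A \otimes \OO(\Psi)$-valued function on a compact domain, the integral reduces to a finite sum and produces an element of $A \otimes \OO(\Psi)$.

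For (ii), the same constraints show that each $v_F(a_i)$ is bounded below independently of $j$, so $j = \sum_i v_F(a_i)$ is itself bounded below, and hence $I^{j}_{(k)} = 0$ for $j$ sufficiently negative. The main obstacle is transferring the classical Whittaker support result uniformly to the family setting, as the standard statements of Jacquet--Piatetski-Shapiro--Shalika are phrased for representations over a field. We propose to address this by fixing $U$ and a finite generating set of $V_i^U$ over $\OO_X$ (available by admissibility and finite generation of $V_i$), thereby reducing to a finite collection of Whittaker functions whose fiberwise support constants can be bounded uniformly thanks to coherence of $V_i^U$; an alternative, which would also work, is to invoke and adapt the analogous statements established in the [Mixed] series over Noetherian local bases.
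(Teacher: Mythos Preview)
Your approach is essentially the same as the paper's: both reduce via the Iwasawa decomposition to integrals over $A_{n_2}^{j}$, and both invoke the torus-support property of Whittaker functions together with (in the equal-rank case) the compact support of $\Phi$ to bound $j$ from below. The paper cites \cite[Lemma 3.1]{moss-gamma} for the case $n_{2}<n_{1}$ and writes out the case $n_{2}=n_{1}$ directly, in coordinates $a=\mathrm{diag}(a_{1}\cdots a_{n},\ldots,a_{n})$.

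One remark: your final paragraph over-complicates the passage to the family setting. The support bound $v_{F}(a_{j})-v_{F}(a_{j+1})\geq -C_{W}$ is a purely group-theoretic consequence of left $(N,\psi)$-equivariance together with right invariance of $W$ under a fixed compact open $U$: if the inequality fails at some index $j$, then $a^{-1}n_{j}(t)a\in U$ for some $t$ with $\psi(t)\neq 1$ (where $n_{j}(t)$ is the elementary unipotent in the $(j,j+1)$ slot), whence $W(a)=\psi(t)W(a)$ and so $W(a)=0$. This computation is valid over any coefficient ring and the constant depends only on the level $U$, not on the fibre. No coherence argument or fibrewise reduction is needed; the paper records this by noting that $C$ depends only on the level of $W_{i}'$ and citing \cite[Lemma 3.2]{moss-interpol}.
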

\begin{proof}
This is  \cite[Lemma 3.1]{moss-gamma} if $n_{2}<n_{1}$.\footnote{The paper \cite{moss-gamma} assumes throughout that the local field $F$ has characteristic zero, however the proof of the cited result goes through without this assumption.}  The case $n_{2}=n_{1}$ is similar and proved as follows.
  By smoothness there exists a compact open $K'_{n}\subset K_{n}$ such that $W_{i}(gk)=W_{i}(g)$ for all $g\in G_{n}, k\in K'_{n}$. Then, by the Iwasawa decomposition, each integral $ I^{j}(W_{1}, W_{2}, \Phi, m) $ 
equals a finite sum of integrals of the from 
\beq\label{integrand}\int_{A_{n}^{j}} W_{1}'(a)W_{2}'(a)\Phi'(e_{n}a)\, da,\eeq
 where $A_{n}^{j}=
A_{n}\cap G_{n}^{j}$, and  the $W_{i}'$, $\Phi'$ are   translates of the $W_{i}$ and $\Phi$ by elements of $K_{n}$. We show that for $j\ll 0$, these integrals vanish. We use coordinates $a=\alpha(a_{1}, \ldots, a_{n}):={\rm diag}  (a_{1}\cdots a_{n},\ldots, a_{n-1} a_{n}, a_{n})$ on $A_{n}$, with $(a_{1}, \ldots, a_{n})\in (F^{\times})^{n}$. By a standard argument (see e.g. \cite[Lemma 3.2]{moss-interpol}), there is a  constant $C$, depending on the level of $W_{i}'$, such that each $W_{i}'(a)=0$ unless $v_{F}(a_{k})\geq -C$ for all $1\leq j\leq n-1$. 
As $\Phi $ is a Schwartz function, we have $\Phi(e_{n}a)=\Phi((0, \ldots, 0,a_{n}))=0 $ unless $v_{F}(a_{n})\geq  -C_{n} $ for some constant $C_{n}$. With $C'=\max\{C, C_{n}\}$, it follows that the integrand of \eqref{integrand} vanishes on $A_{n}^{j}$ whenever $j<-{n+1\choose 2}C'$. 
\end{proof}

\begin{lemm}\label{CPS4} Let $n_{2}\leq n_{1}$ and let $\frak X_{[\frak s_i], [t_{i}]}\subset \frak X_{n_{i}, \baar\Q}'$ be connected components of the extended Bernstein varieties. Let  $A_{i}:=\OO(\frak X_{[\frak s_{i}], [t_{i}]})$ and $A=A_{1}\otimes A_{2}$, let $V_{i}=\pi((r, N)_{[\frak s_{i}], [t_{i}]})$, and let $W_{1}\in \W(V_{1}, \psi)\otimes A_{2}$, $W_{2}\in A_{1}\otimes \W(V_{2} , \psi^{-1})$,  $m\in (1-n_{1}n_{2})/2+ \Z$. 

Then the Laurent series 
$$L^{-1}  (m,V_{1}\times V_{2}, T) \cdot I_{(k)}(W_{1}, W_{2}, (\Phi), m,T) $$
belongs to $ A[T]\otimes \OO(\Psi)$. Its  value at $T=1$ 
is denoted by $$L^{-1}  (m,V_{1}\times V_{2}) \cdot I_{(k)}(W_{1}, W_{2},  (\Phi) ,m).$$
\end{lemm}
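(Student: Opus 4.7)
The plan is to deduce the result from the classical theorem of Cogdell--Piatetski-Shapiro \cite{cps} via a pointwise-to-global argument, exploiting that $A$ is a reduced, finite type algebra over $\baar\Q$. First, I would observe that by construction and the universal formula \eqref{univ wd}, the expression $L^{-1}(m, V_1\times V_2, T)$ is actually a polynomial in $T$ with coefficients in $A$, since it is the characteristic polynomial of Frobenius acting on $(\Ker N)^{I_F}$ for the tensor product of the universal Weil--Deligne representations, with Frobenius scaled by $T$. Combined with the previous lemma, the product
$$F(T) := L^{-1}(m, V_1\times V_2, T)\cdot I_{(k)}(W_1, W_2, (\Phi), m, T)$$
lies in $A\llb T\rrb[T^{-1}]\otimes \OO(\Psi)$, so the task reduces to showing that the coefficients $F_j\in A\otimes \OO(\Psi)$ of $T^j$ vanish for all $j$ exceeding some fixed bound $N$.

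Next, at every closed point $x\in \Spec A$, the fibres $V_{i|x}$ are the generic representations $\pi_{\rm gen}((r,N)_{[\frak s_i],[t_i]|x})$, which by construction lie in a fixed Bernstein component with fixed multisegment class $[t_i]$. After choosing a geometric point of $\Psi$ (equivalently, fixing a concrete additive character), the Cogdell--Piatetski-Shapiro theorem asserts that the specialised Laurent series $F(T)|_x$ is a polynomial in $T$ over $K(x)$. The crux is that the degree of this polynomial can be bounded uniformly in $x$: the $L$-factor degree is locally constant on $\frak X_{n_i}'$ because the multisegment class is fixed, and the conductor of the Whittaker functions and of the Schwartz function can be bounded uniformly because $W_1$, $W_2$ and $\Phi$ are specified once and for all over the whole family and their smoothness levels are locally constant.

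Once a uniform degree bound $N$ is in hand, for each $j>N$ the coefficient $F_j$ vanishes after specialisation to every closed point of $\Spec A$. Since $A$ is reduced and of finite type over $\baar\Q$ (being the ring of regular functions on a component of the extended Bernstein variety), and $\OO(\Psi)$ is likewise reduced, it follows that $F_j=0$ in $A\otimes \OO(\Psi)$. This yields $F\in A[T]\otimes \OO(\Psi)$ as claimed, and the value at $T=1$ is then well-defined. The main obstacle is precisely the uniform degree bound across the Bernstein component; once that uniformity is extracted from the proof in \cite{cps}, the descent from pointwise vanishing to global vanishing is a routine consequence of reducedness and the Nullstellensatz.
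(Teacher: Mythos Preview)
Your approach is workable in principle but takes a more circuitous route than the paper, and the detour is unnecessary because the very result you need is already packaged in \cite{cps} as a statement in families.

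The paper's proof proceeds differently. After base-change to $\C$ and a reduction to $k=0$ via \cite[\S 2.7]{JPPS} when $n_{2}<n_{1}$, it passes to the finite cover $\Spec \wtil{A}_{i}\cong {\bf G}_{m}^{\sum_{r}\ell_{i,r}}$ and covers this torus by opens $U_{j}$ on which the universal representation has an explicit description as a parabolic induction of segments in admissible order. The point is that these opens are \emph{precisely} the deformation spaces $\mathcal{D}_{\pi}$, $\mathcal{D}_{\sigma}$ appearing in \cite{cps}, and Cogdell--Piatetski-Shapiro prove (in the paragraph after their Proposition~4.1) that $L^{-1}\cdot I$ is a polynomial in $T$ with coefficients in $\OO(\mathcal{D}_{\pi}\times\mathcal{D}_{\sigma})$. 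So the lemma follows by direct citation, with no pointwise-to-global argument required.

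Your strategy instead invokes \cite{cps} only pointwise and then seeks a uniform degree bound to apply reducedness. You correctly flag this bound as the crux, but observe that such a bound is not controlled merely by the level of $W_{1}$, $W_{2}$, $\Phi$: the high-$j$ behaviour of $I^{j}$ depends on the torus asymptotics of the Whittaker functions, which in turn depend on the exponents of $V_{i|x}$ and hence vary with $x$. The uniform bound you need is most cleanly obtained as a \emph{consequence} of the family statement in \cite{cps} --- which is exactly what the paper cites directly. So your route, while not wrong, essentially re-derives the family result from itself via an intermediate step. The paper's approach also makes transparent why one must pass to the cover $\wtil{A}_{i}$ and to the opens $U_{j}$: the input from \cite{cps} is stated for a fixed ordering of segments, which is only globally valid on such opens.
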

\begin{proof} We may prove the result after base-change to $\C$ (which we don't signal in the notation).  Moreover, if $n_{2}<n_{1}$ we may reduce to the case $k=0$ as  in \cite[\S 2.7]{JPPS}. Recall that $\Spec A_{i}$ is the quotient by a finite group of $$\Spec \wtil{A}_{i}:={\bf G}_{m, \baar \Q}^{\sum_k{\ell_{i, k}}}$$ if $t_{i}=(t_{i,r})$ with $t_{i,r}$ a partition of length $\ell_{i, r}$. Then we may also prove the result after pullback to $\Spec \wtil{A}_{i}$. Similarly to the construction of the universal co-Whittaker module in Proposition \ref{tre},  we may cover $\Spec \wtil{A}_{i}$ with open sets $U_{j}$ such that over $U_{j}\ni (\chi_{1,1}, \ldots ,\chi_{1, \ell_{1}}, \ldots, \chi_{s,1}, \ldots, \chi_{s, \ell_{s}}) $
 we have
$$V_{i}|_{U_{i}}\cong {\rm I}_{P}^{G}(\Delta(\sigma_{1}\chi_{1,1}, t_{1,1}), \ldots, \Delta(\sigma_{1}\chi_{1, \ell_{1}}, t_{1,\ell_{1}}),\ldots,  \Delta(\sigma_{s}\chi_{s,1}, t_{s,1}), \ldots, \Delta(\sigma_{s}\chi_{s, \ell_{s}}, t_{\ell_{s}})).$$
(Namely, $U_{j}$ is the locus where the ordering of the segments occurring in the above representation satisfies the condition detailed above \eqref{rdnp}) Then the desired result  is proved by Cogdell and Piatetski--Shapiro in \cite[paragraph after Proposition 4.1]{cps}. (In their notation, $\Spec\wtil{A}_{1}$, resp.  $\Spec\wtil{A}_{2}$, is $\mathcal{D}_{\pi}$, resp. $\mathcal{D}_{\sigma}$,  where $\pi$ and $\sigma$ are given specialisations of $V_{1}$ and $V_{2}$.)
\end{proof}

\begin{prop}\label{511} Let  $X$ be a Noetherian $K$-scheme, and for  $i=1,2$, let $V_{i}$ be a co-Whittaker  $\OO_{X}[G_{n_{i}}]$-module in the essential image of the correspondence $\pi$, with $n_{2}< n_{1}$.
Let $W_{1}$, resp. $W_{2}$, be sections of $ \W(V_{1}, \psi)$,  resp. $ \W(V_{2} , \psi^{-1})$; let 
 $m\in (1-n_{1}n_{2})/2+ \Z$. 
 
Then the Laurent series 
$$L^{-1}_{\rm ss}  (m,V_{1}\times V_{2}, T) \cdot I_{(k)}(W_{1}, W_{2}, (\Phi), m,T) $$
is a section of  $ \OO_{X}[T]\otimes \OO(\Psi)$. Its  value at $T=1$ 
is denoted by $$L_{\rm ss}^{-1}  (m,V_{1}\times V_{2}) \cdot I_{(k)}(W_{1}, W_{2},  (\Phi) ,m).$$
\end{prop}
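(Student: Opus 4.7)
The plan is to reduce to Lemma \ref{CPS4} by restricting to the open locus on which both monodromy operators are locally constant, and then to globalise via a density argument exploiting the reducedness of $X$.

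By Theorem \ref{theo-LLF} we may write $V_i \cong \pi(r_i')$ for Frobenius-semisimple Weil--Deligne representations $r_i' = (r_i, N_i)$ of $W_F$ over $X$. Applying Proposition \ref{nowhere dense} to each of $r_1'$ and $r_2'$, there is a dense open $X_0 \subseteq X$ on which both $N_1$ and $N_2$ are locally constant, so that by Theorem \ref{coarseWD} the restrictions are classified by morphisms $\alpha_i' \colon X_0 \to \frak X'_{n_i}$. Since $X$ is Noetherian, $X_0$ has finitely many connected components; working locally we may assume each $\alpha_i'$ factors through a single connected component $\frak X'_{[\frak s_i], [t_i]}$.

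Locally on $X_0$, the module $V_i|_{X_0}$ is torsion-free and co-Whittaker and shares generic fibres with the pullback $(\alpha_i')^* \pi((r,N)_{[\frak s_i], [t_i]})$ of the universal co-Whittaker module featured in Lemma \ref{CPS4}. By the uniqueness statement of Lemma \ref{415}, the two modules differ by an invertible sheaf with trivial $G$-action, which can be absorbed by rescaling the Whittaker vectors $W_i$. Pulling Lemma \ref{CPS4} back along $(\alpha_1' \times \alpha_2')|_{X_0}$ therefore shows that $L^{-1}(m, V_1 \times V_2, T) \cdot I_{(k)}(W_1, W_2, (\Phi), m, T)$ is a polynomial in $T$ with coefficients in $\OO_{X_0} \otimes \OO(\Psi)$. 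Since $(\Ker N)^{I_F} \subseteq r^{I_F}$ for any Frobenius-semisimple $(r,N)$ (with $\phi$ acting semisimply on the invariants), the polynomial $L^{-1}(m, V_1 \times V_2, T)$ divides $L^{-1}_{\rm ss}(m, V_1 \times V_2, T)$ in $\OO(X_0)[T]$ with polynomial quotient (compare Remark \ref{non-constant mon}); consequently $L^{-1}_{\rm ss}(m, V_1 \times V_2, T) \cdot I_{(k)}(W_1, W_2, (\Phi), m, T)$ is also a polynomial in $T$ on $X_0$.

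Its degree is bounded above uniformly on $X_0$, because $X_0$ has only finitely many connected components and on each the polynomial is pulled back from a single Bernstein component. The coefficients of $L^{-1}_{\rm ss}(m, V_1 \times V_2, T) \cdot I_{(k)}(W_1, W_2, (\Phi), m, T)$ are sections of $\OO_X \otimes \OO(\Psi)$, and by the previous paragraph those of degree $|j|$ exceeding this uniform bound vanish on $X_0$. Since $X_0$ is dense in the reduced Noetherian $X$, they vanish on all of $X$, proving the desired integrality. The main obstacle in this strategy lies in the identification of the given $V_i|_{X_0}$ with the pullbacks of the universal co-Whittaker modules on the $\frak X'_{[\frak s_i], [t_i]}$ that appear in Lemma \ref{CPS4}; this is handled by the uniqueness of the local Langlands correspondence in families up to twist by a line bundle.
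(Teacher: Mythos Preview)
Your strategy is sound in outline but takes a more circuitous route than the paper and has a gap in the identification step. The paper's proof is more direct: it never restricts to a dense open or passes through the extended Bernstein variety. Instead, after reducing to $X=\Spec A$ affine and connected with $K$ algebraically closed, it uses the surjection $\alpha_i^*\frak W_{n_i} \to V_i$ supplied by Lemma \ref{is univ} (with $\frak W_{n_i}$ the universal co-Whittaker module over the \emph{ordinary} Bernstein component $\frak X_{[\frak s_i]}$, i.e.\ the trivial-monodromy case), and observes that this surjection induces an isomorphism $\W(\frak W_i,\psi)\otimes_{A_i,\alpha_i} A \cong \W(V_i,\psi)$ of Whittaker models. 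Since the zeta integrals depend only on the Whittaker functions, this reduces at once to the special case of Lemma \ref{CPS4} with trivial multipartition $[t_i]$, where $\frak X_{[\frak s_i],[t_i]}\cong \frak X_{[\frak s_i]}$ and $L^{-1}=L^{-1}_{\rm ss}$. No divisibility argument and no density extension are needed.

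The gap in your argument: you invoke Lemma \ref{415} to identify $V_i|_{X_0}$ with the pullback $(\alpha_i')^*\pi((r,N)_{[\frak s_i],[t_i]})$ up to a line bundle, but Lemma \ref{415} requires both modules to be torsion-free, and there is no reason the pullback along the (generally non-flat) map $\alpha_i'$ should be torsion-free. (There is also a minor imprecision: the universal representation $(r,N)_{[\frak s_i],[t_i]}$ lives over $\frak X^\square_{[\frak s_i],[t_i]}$, not over the target of $\alpha_i'$.) This can be repaired --- for instance by passing to the maximal torsion-free quotient of the pullback, or by arguing directly that the two Whittaker models inside $\mathrm{Ind}(\psi)$ coincide --- but the cleanest repair is precisely to factor through the surjection from $\alpha_i^*\frak W_{n_i}$, at which point your detour through $X_0$, the extended Bernstein variety, and the divisibility $L^{-1}\mid L^{-1}_{\rm ss}$ becomes unnecessary.
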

\subsubsection{Example} Suppose that $X=\Spec \C[q^{\pm s}]$ where $s$ is an indeterminate,  that $V_{i}^{\circ}=\pi((r_{i},N_{i}))$ are given irreducible generic representations of $G_{n_{i}}$ over $\C$, and that $V_{1}=V_{1}^\circ \otimes A$, $V_{2}=V_{2}^{\circ}|\det |^{s}$, the twist of $V_{2}^{\circ}$ by the unramified character $|\, |^{s}=\chi_{q^{-s}}\colon F^{\times }\to \C[q^{\pm s}]^{\times}$ sending a uniformiser to $q^{-s}$. Let $W_{i}^{\circ}$ be Whittaker functions for $V_{i}^{\circ}$ and let $W_{1}=W_{1}^{\circ}$, $W_{2}=W_{2}^{\circ}\cdot |\det|^{s}$. Let $V_{i}^{\circ, {\rm ss}}=\pi((r_{i}, 0))$. Then $I(W_{1}, W_{2}, m)$ is the zeta integral commonly denoted by $I(W_{1}^{\circ}, W_{2}^{\circ}, s+m)$ and  $L_{\rm ss}^{-1}(m,V_{1}\times V_{2})=L(s+m,V_{1}^{\circ, {\rm ss}}\times V_{2}^{\circ, {\rm ss}})^{-1}$, and with $s'=s+m$
the proposition says that 
$$ L(s', V_{1}^{\circ, {\rm ss}}\times V_{2}^{\circ, {\rm ss}})^{-1}\cdot I(W_{1}^{\circ}, W_{2}^{\circ}, s')\in \C[q^{\pm s'}].$$ 
By the theory of Rankin--Selberg zeta integrals \cite{JPPS} this statement (for all $W_{1}^{\circ}$, $W_{2}^{\circ}$) is equivalent to the  assertion that  $L(s', V_{1}^{\circ, {\rm ss}}\times V_{2}^{\circ, {\rm ss}})^{-1}$ divides  $L(s', V_{1}^{\circ}\times V_{2}^{\circ})^{-1}$ in $\C[q^{\pm s'}]$.

  \begin{proof} We may assume $X=\Spec A$ is affine and, after a possible base-change, that $K$ is algebraically closed.
   We may also assume that $\Spec A$ is connected and, after tensoring with an invertible $\OO_{X\ts\Psi}$-module, that $(V_{i}\ot\OO(\Psi))^{(n_{i})}$ is free. Let $r_{i}'$ be such that $V_{i}\cong \pi(r_{i}')$ and let  $\alpha_{i}\colon \Spec A \to \frak X_{n_{i}, K}$ be the map given by Theorem \ref{coarse}.  Suppose that $\alpha_{i}$ has image in the base-change from $\baar\Q$ to $K$ of a component $\frak X_{ [\frak s_{i}]}=\Spec A_{i}$. Let $\frak W_{i}$ be the restriction of $\frak W_{n_{i}}$ to  $\frak X_{ [\frak s_{i}]}$, then by construction of $\pi(\cdot )$ and 
Lemma \ref{is univ}
      there is a surjection $\frak W_{i}\otimes A\to V_{i}$ inducing an isomorphism $\W(\frak W_{i}, \psi)\otimes_{A_{i}, \alpha_{i}} A\cong \W (V_{i}, \psi)$. Thus we reduce to the case where $A$ is a module over $A_{1}\otimes A_{2}$ and $V_{i}=\frak W_{i}\otimes_{A_{i}}A$, and it is enough to treat the case  $A=A_{1}\otimes A_{2}$. This follows from the previous lemma.
  \end{proof}

\begin{prop}\label{515}
In the situation of  Proposition \ref{511}, suppose that  $V_{i}=\pi((r_{i}, N_{i} ))$ for a pair of families $(r_{i}, N_{i})$ of Weil--Deligne representations over $X$ with locally constant monodromy.  
Then the Laurent series 
$$L^{-1}  (m,V_{1}\times V_{2}, T) \cdot I(W_{1}, W_{2}, (\Phi), m,T) $$
is a section of  $ \OO_{X}[T]\otimes \OO(\Psi)$. Its  value at $T=1$ 
is denoted by $$L^{-1}  (m,V_{1}\times V_{2}) \cdot I(W_{1}, W_{2}, (\Phi) ,m).$$
  \end{prop}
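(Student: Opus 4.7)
The plan is to mirror the proof of Proposition~\ref{511}, replacing the semisimple Bernstein variety $\frak X_{n_i}$ by the extended Bernstein variety $\frak X'_{n_i}$ throughout, and invoking Lemma~\ref{CPS4} in place of a direct calculation. The hypothesis that each $(r_i, N_i)$ has locally constant monodromy is precisely what is needed to lift the classifying map $\alpha_i \colon X \to \frak X_{n_i}$ to $\alpha'_i \colon X \to \frak X'_{n_i}$ via Theorem~\ref{coarseWD}.

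First I would make the standard reductions: assume $X = \Spec A$ is affine and connected, that $K$ is algebraically closed (using that the assertion is stable under faithfully flat base-change), and that $(V_i \otimes \OO(\Psi))^{(n_i)}$ is free (possible after tensoring with an invertible $\OO_{X\ts\Psi}$-module, which scales the integral $I(W_1,W_2,(\Phi),m,T)$ by a unit). Under these reductions, the map $\alpha'_i$ furnished by Theorem~\ref{coarseWD} may be assumed to factor through a single connected component $\frak X_{[\frak s_i],[t_i]}$ of $\frak X'_{n_i,\baar\Q}$ base-changed to $K$; set $A_i := \OO(\frak X_{[\frak s_i],[t_i]})$, so that $A$ becomes an $A_1 \otimes A_2$-algebra through $\alpha'_1 \otimes \alpha'_2$.

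Next I would identify $V_i$ with the pullback of the universal co-Whittaker module over $\frak X_{[\frak s_i],[t_i]}$. Concretely, applying Theorem~\ref{theo-LLF} to the universal Weil--Deligne representation $(r,N)_{[\frak s_i],[t_i]}$ of \eqref{univ wd} (descended from $\frak X_{[\frak s_i],[t_i]}^{\square}$ to $\frak X_{[\frak s_i],[t_i]}$, up to twist by a line bundle, as in Proposition~\ref{tre}) produces a torsion-free strictly co-Whittaker module $\wtil V_i$ over $A_i$ whose fibre at any point is $\pi_{\rm gen}$ of the corresponding Weil--Deligne representation. The uniqueness clause of Theorem~\ref{theo-LLF}, together with Lemma~\ref{415}, then yields an isomorphism $V_i \cong \wtil V_i \otimes_{A_i} A$ up to twisting by an invertible $\OO_X$-module, and this isomorphism induces an identification of Whittaker models $\W(V_i,\psi^{\pm 1}) \cong \W(\wtil V_i,\psi^{\pm 1}) \otimes_{A_i} A$ compatible with the integrals (since the top derivative, and hence the Whittaker pairing, is compatible with base change by Proposition~\ref{topder}.2). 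The $L$-factor $L^{-1}(m, V_1 \times V_2, T)$ likewise pulls back from $L^{-1}(m, \wtil V_1 \times \wtil V_2, T) \in A_1 \otimes A_2[T,T^{-1}]$ by construction, since it was defined via $(\alpha'_1 \times \alpha'_2)^*$.

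At this point the problem reduces to the universal case $A = A_1 \otimes A_2$, with $V_i = \pi((r,N)_{[\frak s_i],[t_i]})$ and Whittaker functions $W_1 \in \W(V_1,\psi) \otimes A_2$, $W_2 \in A_1 \otimes \W(V_2,\psi^{-1})$, which is exactly the hypothesis of Lemma~\ref{CPS4}; the conclusion is then immediate. The main obstacle in this plan, as in the proof of Proposition~\ref{511}, is the step identifying $V_i$ with the pullback of the universal co-Whittaker module: one has to verify that the invertible twist produced by Lemma~\ref{415} can be absorbed into the choice of Whittaker function without altering the polynomiality conclusion, which follows because sections of a line bundle multiply integrals by units in $A$ and leave the membership in $A[T] \otimes \OO(\Psi)$ unchanged.
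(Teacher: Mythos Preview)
Your proposal is correct and follows essentially the same approach as the paper: reduce to $X=\Spec A$ affine, connected, $K$ algebraically closed; use Theorem~\ref{coarseWD} to obtain maps $\alpha'_i\colon X\to \frak X_{[\frak s_i],[t_i]}$; identify $V_i$ with the pullback of $\pi((r,N)_{[\frak s_i],[t_i]})$; and then invoke Lemma~\ref{CPS4}. The paper's own proof is just a terse three-sentence version of exactly this, so your added care about the line-bundle twist and the compatibility of Whittaker models under base change simply spells out details the paper leaves implicit.
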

\begin{proof} 
Similarly to the previous proof, we may reduce to  the case where $K$ is algebraically closed and $X=\Spec A$ is affine and connected. Then,   by Theorem \ref{coarseWD}, for each $i$ we have a map $\alpha_{i}'\colon X\to \frak X_{[\frak s_{i}], [t_{i}]}$  such that $V_{i}\cong \pi((r, N)_{[\frak s_{i}], [t_{i}]})  \otimes_{\OO(\frak X_{[\frak s_{i}], [t_{i}]})} A$. Thus we again reduce to Lemma \ref{CPS4}.
\end{proof}

We give an application to  local invariant inner products. 
\begin{lemm} Let $V$ be a smooth admissible irreducible and generic representation of $G_{n}$ over $\C$. 
  Suppose  that $V$ is essentially unitarisable, that is a twist of $V$ is identified with the space of smooth vectors in a unitary representation. Then there is  a $G_{n}$-invariant bilinear pairing 
$$\la\, \, \ra_{V}\colon \cW(V, \psi)\otimes \cW(V^{\vee}, \psi^{-1})\to \C$$ given by the absolutely convergent integral
$$\la W, W^{\vee}\ra_{V}= (\vol(K_{n})L(1, V\times V^{\vee}))^{-1}\cdot \int_{N_{n-1}\bks G_{n-1}} W(\smalltwomat g{}{}1) W^{\vee}(\smalltwomat g{}{}1)\, dg$$
If  $W$, $W^{\vee}$ are unramified and normalised by $W(1)=W^{\vee}(1)=1$, then $\la W, W^{\vee}\ra=1$. 
  \end{lemm}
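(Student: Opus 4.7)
The plan is to view $\langle\,,\,\rangle_V$ as (the renormalization of) a Rankin--Selberg zeta integral for $V \times V^\vee$ at a specific point, and to establish separately the absolute convergence, the $G_n$-invariance, and the unramified evaluation.

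For convergence, I would introduce the twisted integral
\[
Z(s; W, W^\vee) := \int_{N_{n-1}\backslash G_{n-1}} W\bigl(\smalltwomat{g}{}{}{1}\bigr)\, W^\vee\bigl(\smalltwomat{g}{}{}{1}\bigr)\, |\det g|^{s}\, dg,
\]
which is an instance of the Rankin--Selberg integral $I_{(k)}$ from \S\ref{sec: Lz} (with $n_2 = n_1 - 1$ after embedding $V^\vee$ via restriction along $g \mapsto \smalltwomat{g}{}{}{1}$). By the theory of \cite{JPPS}, $Z(s; W, W^\vee)$ converges for $\mathrm{Re}\, s$ sufficiently large, continues to a rational function of $q_F^{-s}$, and $Z(s; W, W^\vee)/L(s+\tfrac{1}{2}, V \times V^\vee)$ is a polynomial in $q_F^{\pm s}$. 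When $V$ is essentially unitarisable, standard bounds on Whittaker functions for unitary generic representations (Jacquet), combined with Iwasawa decomposition, show that after dividing by $L(1, V \times V^\vee)$ no pole survives at the point corresponding to $s = 0$, so the normalized integral defining $\langle W, W^\vee\rangle_V$ is finite.

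For $G_n$-invariance, invariance under the mirabolic $P_n \subset G_n$ is direct: translating $g$ by $\smalltwomat{g_0}{}{}{1}\in G_{n-1}$ preserves the measure $dg$ on $N_{n-1}\backslash G_{n-1}$, and translation by $u \in N_n$ multiplies $W$ by $\psi(u)$ and $W^\vee$ by $\psi^{-1}(u)$, which cancel. To upgrade $P_n$-invariance to $G_n$-invariance, I would invoke the one-dimensionality, by Schur's lemma applied to the irreducible $V$, of the space of $G_n$-invariant bilinear forms on $V \otimes V^\vee$, together with the Bernstein--Zelevinsky theory of $P_n$-modules on irreducible generic representations, which implies that every nonzero $P_n$-invariant bilinear form with matching Whittaker twists is automatically $G_n$-invariant and hence a scalar multiple of the tautological pairing.

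Finally, for the unramified normalization, I would apply the Shintani--Casselman--Shalika formula, which expresses the spherical $W\bigl(\smalltwomat{a}{}{}{1}\bigr)$ for $a \in A_{n-1}$ as an explicit symmetric polynomial in the Satake parameters of $V$. The Iwasawa decomposition and $K_{n-1}$-bi-invariance reduce $Z(0; W, W^\vee)$ to a sum over the dominant cone of $A_{n-1}/A_{n-1}(\OO_F)$, which by the classical calculation of Jacquet--Piatetski-Shapiro--Shalika evaluates to $\vol(K_n)\cdot L(1, V \times V^\vee)$ in the measure normalization fixed at the start of \S\ref{sec: Lz} (the factor $\vol(K_n)$, rather than $\vol(K_{n-1})$, arises from the convention $dg = \zeta_F(1)\prod dx_{ij}/|\det g|^n$ that builds $\zeta_F(1)$ into the volume of $K_n$). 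Dividing through yields $\langle W, W^\vee\rangle_V = 1$, as claimed. The main obstacle is the passage from $P_n$- to $G_n$-invariance: convergence follows from off-the-shelf estimates and the spherical computation is routine, while the invariance step relies on Bernstein--Zelevinsky structural results not otherwise used in the paper.
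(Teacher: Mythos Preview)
The paper's proof is essentially a two-line citation: reduce to the unitarisable case (so that $V^{\vee}\cong\baar{V}$ and $W\mapsto\baar{W}$ identifies $\cW(V^{\vee},\psi^{-1})$ with $\baar{\cW(V,\psi)}$), and then invoke Jacquet--Shalika \cite[\S 1, Proposition 2.3]{euler products}, where convergence, $G_{n}$-invariance, and the unramified computation are all established. Your proposal instead tries to reconstruct those results from scratch; this is a legitimate goal but your sketch has two substantive problems.

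First, the identification with the Rankin--Selberg integrals $I_{(k)}$ of \S\ref{sec: Lz} in the case $n_{2}=n_{1}-1$ is not correct. In those integrals $W_{2}$ is a Whittaker function for a representation of $G_{n_{2}}$, whereas here $g\mapsto W^{\vee}(\smalltwomat{g}{}{}{1})$ is the restriction to $G_{n-1}$ of a $G_{n}$-Whittaker function; this is \emph{not} a Whittaker function for an irreducible $G_{n-1}$-representation, so neither the convergence statement nor the $L$-factor $L(s+\tfrac{1}{2},V\times V^{\vee})$ follows from the $n_{2}<n_{1}$ theory of \cite{JPPS}. The correct Rankin--Selberg link (which the paper exploits in the \emph{next} proposition) is to the $n_{1}=n_{2}=n$ case with a Schwartz function $\Phi$: the identity $\widehat{\Phi}(0)\cdot\int_{N_{n-1}\bks G_{n-1}}W(\smalltwomat{g}{}{}{1})W^{\vee}(\smalltwomat{g}{}{}{1})\,dg=I(W,W^{\vee},\Phi,1)$ is what connects the inner-product integral to an object whose analytic properties are known.

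Second, the passage from $P_{n}$-invariance to $G_{n}$-invariance is the genuine content of the lemma, and your justification is circular. You invoke Schur to say $G_{n}$-invariant pairings form a line, but that says nothing about whether a given $P_{n}$-invariant pairing lies on that line. The assertion that ``every nonzero $P_{n}$-invariant bilinear form with matching Whittaker twists is automatically $G_{n}$-invariant'' is not a standard consequence of Bernstein--Zelevinsky, and in fact $P_{n}$-invariant pairings on $V\otimes V^{\vee}$ need not be unique in general. The actual argument in \cite{euler products} (or Bernstein's alternative) proceeds differently: one shows the invariant pairing exists abstractly, realises it concretely in the Kirillov model, and proves directly via asymptotics of Whittaker functions for tempered representations that the integral converges absolutely and computes that pairing. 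If you want to avoid citing, you should sketch that route rather than the $P_{n}\Rightarrow G_{n}$ step as written.
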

  \begin{proof}
  We may reduce to the case where $V$ is unitarisable, then $V^{\vee}\cong \baar{V}$ and $W\mapsto \baar{W}$  defines an isomorphism $\cW(V^{\vee}, \psi^{-1})\cong \baar{\cW(V, \psi)}$. Then  $\la \ , \  \ra$ is identified with  the pairing of 
    \cite[\S 1]{euler products}. 
The second statement is proved  in Proposition 2.3
   \emph{ibid.} 
   \end{proof}

  For a scheme $X/\Q$, denote  by  $\OO_{X}(j)$  the unramified $1$-dimensional representation of $W_{F}$ sending a geometric Frobenius to $q_{F}^{-j}\in \OO(X)^{\times}$, and for a Weil--Deligne representation $r'$ denote by $r'^{*}$ its dual. When $X=\Spec \C$, by \eqref{duality} we have $\pi(r'^{*}(1-n))=\pi(r')^{\vee}$.

\begin{prop}\label{pairing} Let $r':=(r, N)$ be an $n$-dimensional Weil--Deligne representation with locally constant monodromy over $X$, let $r'^*(1-n):=\Hom_{\OO_{X}}(r', \OO_{X}(1-n))$. Let $V:= \pi(r')$, $V^{\vee}:=\pi(r'^{*}(1-n))$, so that for each $x\in X$ the representation $(V^{\vee})_{x}$ is the contragredient of $V_{x}$. 

Then there is a pairing 
$$\la\ , \, \ra\colon \cW(V, \psi)\otimes_{\OO_{X}} \cW(V^{\vee},\psi^{-1})\to \OO_{X}$$
such that for each complex geometric point $x\in X(\C)$ such that $V_{x}$ is essentially unitarisable
and any $W\in  \cW(V, \psi)$, $W^{\vee}\in \cW(V^{\vee},\psi^{-1})$, we have 
$$\la W, W^{\vee}\ra(x)= \la W_{x}, W_{x}^{\vee}\ra_{V_{x}}.$$
\end{prop}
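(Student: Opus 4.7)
The plan is to realise the pairing $\la\,,\,\ra$ as a normalised Rankin--Selberg zeta integral in the sense of \S\ref{sec: Lz}, so that both its regularity on $X$ and its interpolation at unitarisable complex fibres follow from Proposition \ref{515}.

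First, I would match the defining integral of the preceding lemma with a $\GL_n \ts \GL_n$ Rankin--Selberg zeta integral. Using the Iwasawa decomposition $G_n = P_n \cdot A_1 \cdot K_n$ (with $A_1 = \{\mathrm{diag}(I_{n-1}, a) : a \in F^{\ts}\}$) to factorise
$$I(W, W^\vee, \Phi_0, m_0) = \int_{N_n \bks G_n} W(g) W^\vee(g) \Phi_0(e_n g) |\det g|^{m_0}\, dg$$
through the final $A_1 K_n$ coordinate, taking $\Phi_0$ proportional to $\mathbf{1}_{\OO_F^n}$, and choosing $m_0$ to be the appropriate half-integer (via the $T$-variable of Proposition \ref{515} if $m_0 \notin (1-n^2)/2 + \Z$) reduces $I(W, W^\vee, \Phi_0, m_0)$ at each complex fibre to $\vol(K_n) \cdot \int_{N_{n-1}\bks G_{n-1}} W(\smalltwomat g{}{}1) W^\vee(\smalltwomat g{}{}1)\, dg$, up to a Tate-integral factor. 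This is the classical Jacquet--Piatetski-Shapiro--Shalika unfolding.

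Second, observing that $V^\vee = \pi(r'^*(1-n))$ is itself $\pi$ of a Weil--Deligne representation with locally constant monodromy (the monodromy of $r'^*(1-n)$ has the same Jordan type as that of $r'$), I would apply Proposition \ref{515} to the pair $(V, V^\vee)$ and define
$$\la W, W^\vee \ra := \vol(K_n)^{-1} \cdot L^{-1}(m_0, V \ts V^\vee) \cdot I(W, W^\vee, \Phi_0, m_0) \in \OO_X,$$
which is manifestly $\OO_X$-bilinear in $(W, W^\vee)$. The specialisation at a unitarisable complex $x$ then recovers $\la W_x, W_x^\vee \ra_{V_x}$ from the unfolding of the first step together with \eqref{duality}, which identifies $V^\vee_x$ with the smooth contragredient of $V_x$.

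The main obstacle is the bookkeeping in the first step: reconciling the conventions for the Rankin--Selberg $L$-factor, the volume $\vol(K_n)$, the shift $m_0$, and, if necessary, the $T$-deformation of Proposition \ref{515}, so that the fibrewise comparison with the preceding lemma is exact. Conceptually no new input is needed beyond the classical unfolding and Proposition \ref{515}.
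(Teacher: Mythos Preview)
Your approach is essentially the same as the paper's. The paper invokes the unfolding identity
$$\widehat{\Phi}(0)\cdot \int_{N_{n-1}\bks G_{n-1}} W_{x}\smalltwomat g{}{}1 W_{x}^{\vee}\smalltwomat g{}{}1\, dg = I(W_{x}, W_{x}^{\vee}, \Phi, 1)$$
directly from \cite{wei-aut} (after \cite{euler products}) for an \emph{arbitrary} Schwartz function $\Phi$, then simply chooses any $\Phi$ with $\widehat{\Phi}(0)=1$ and takes $m_{0}=1$; the resulting definition $\la W, W^{\vee}\ra:= \vol(K_{n})^{-1} L^{-1}(1, V\times V^{\vee}) \cdot I(W, W^{\vee}, \Phi, 1)$ via Proposition~\ref{515} is exactly yours. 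So your ``main obstacle'' dissolves: no Tate-integral remainder or $T$-deformation is needed once one quotes the identity in the form above, and the specific choice $\Phi_{0}\propto\mathbf{1}_{\OO_{F}^{n}}$ is unnecessary.
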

\begin{proof} Let $\Phi\in \mathcal{S}(F^{n}, \Q)$ be a Schwartz function. By  \cite[proof of Proposition 3.1]{wei-aut} (after \cite{euler products}),  for each $x\in X(\C)$ we have
$$ \widehat{\Phi}(0)\cdot 
\int_{N_{n-1}\bks G_{n-1}} W_{x}(\smalltwomat g{}{}1) W_{x}^{\vee}(\smalltwomat g{}{}1)\, dg
 = I(W_{x}, W_{x}^{\vee}, \Phi, 1).$$
Then we may choose any $\Phi $ such that $  \widehat{\Phi}(0)=1$ and define
$$\la W, W^{\vee}\ra:= \vol(K_{n})^{-1} L^{-1}(1, V\times V^{\vee}) \cdot I(W, W^{\vee}, \Phi, 1),$$
where the right-hand side is provided by Proposition \ref{515}.
\end{proof}

\subsection{Local constants}  
In this final subsection we interpolate $\gamma$- and $\eps$-factors. 

Recall that the Deligne--Langlands $\gamma$-factor of a complex Weil--Deligne representation $r'=(r, N)$ of $W_{F'}'$, with respect to a nontrivial $\psi\colon F\to \C^{\ts}$, is
$$\gamma(r, \psi)  = \eps((r, N), \psi) {  L((r, N)^{*}(1)) \over L((r, N))}.$$
Unlike $L$- and $\eps$-factors, it is independent of the monodromy as the notation suggest; its interpolation will therefore be particularly simple.

If $r$ a representation of $W_{F}$,  we will denote by $L_{\rm ss}(r)$ the inverse (possibly equal to $\infty$) of the  function $L^{-1}((r_{?},0)) $ defined in Proposition \ref{interp Lv} . 
\begin{theo} \label{interp gamma} Let $X$ be an object of ${\rm Noeth}_{K}$ and let $r$ be a representation of $W_{F}$ on a locally free $\OO_{X}$-module of rank $n$. 
 Then there is   a meromorphic function
$$\gamma(r)\in \mathscr{K}(X\times \Psi)\cup\{\infty\}$$
such that for all complex geometric points $(x, \psi)$ of $X\times \Psi$ we have 
$$\gamma(r)(x, \psi) = \gamma(r_{x}, \psi).$$
The   divisor of $\gamma(r)$ is the product of $\Psi$ and of 
\beq
(\gamma(r))_{X}=  (L_{\rm ss}(r^{*}(1)  / L_{\rm ss}(r)).
\eeq
\end{theo}
We note that in \cite{DL}, a similar result is proved over a mixed-characteristic base in a purely Galois-theoretic way, that is  without appealing to the semisimple Langlands correspondence  in families (as we do). In fact, the result of  \emph{loc. cit.} is a key ingredient in the proof of the correspondence in families in the context of [Mixed].
\begin{proof} 
By Theorem \ref{coarse}, it suffices to construct a  function $\gamma$ on the product $\frak X_{n}\ts \Psi$ such that $\gamma(x, \psi)=\gamma(r_{x}, \psi)$ for any complex geometric point $(x, \psi)$. 

If $n=1$, the  explicit formulas for  $L(r)$ (Proposition \ref{interp Lv}) and 
$$\eps(r, \psi)=\begin{cases}  1 & \text{if $r$ is unramified}\\
\int_{{\vpi}^{-f-1}\OO^{\ts}}  r(t)^{-1}\psi(t) d_{\psi}t, &\text{if $r$ is ramified of conductor $f$}\end{cases}$$
clearly interpolate over (any connected component of) $\frak X_{1}\ts \Psi$.

Assume now $n\geq 2$. We use the argument of  Moss \cite{moss-interpol}.    Let $\mathfrak{W}'$ be the universal co-Whittaker module over $\frak X_{n}\ts \Psi$ (Definition \ref{univ coW}). We consider zeta integrals $I_{k}(W,m):= I_{k}(W, 1, m)$ for the product of  $\mathfrak{W}'$ (or its contragredient)  and the trivial representation of $G_{1}$ over $\frak X_{n}\ts \Psi$.  Define 
$$w_{n-1, 1}:= 
\begin{pmatrix}
    (-1)^n & 0 & \cdots & 0\\
0 & 0 &\cdots  & (-1)^{n-2}\\
    \vdots &\vdots  & \iddots &  \vdots\\
0 & 1 & \cdots & 0.
\end{pmatrix}
$$
and, for $W\in \mathfrak{W}'$,  ${W}^{\iota}(g):=W({}^{t}g^{-1})$, which is a Whittaker function on the contragredient. By the local functional equation of \cite{JPPS} and the compatibility of the Langlands correspondence with $\gamma$-factors,  the   function  $\gamma$   on $\frak X_{n}\ts \Psi$  is characterised, if it exists, by the property
\beq \lb{feqg} I_{n-2}(({w_{n-1,1}W)^{\iota} } , 1) = \gamma \cdot I_{0} (W, 0)\eeq
for some (equivalently for all) $W\in \mathfrak{W}$.

Let $W_{0}\in \mathfrak{W}$ be such that $I_{0}(W, 0)=1$ (e.g. we may take any $W_{0}$ whose restriction to $G_{1}\ts \{I_{n-1}\}$ equals a suitable constant multiple of the characteristic function of a compact subgroup of $F^{\ts}$). Then 
$$\gamma:= I(({w_{n-1, 1}W}_{0})^{\iota},1)$$
satisfies \eqref{feqg}, hence it
is the desired function.
\end{proof}

\begin{coro}\lb{ghj} Let $X$ be an object of ${\rm Noeth}_{K}$ and let $r'=(r, N)$ be a representation of $W_{F}'$ on a locally free $\OO_{X}$-module $M$, with locally constant monodromy. Then there is a unique section 
$$\eps(r') \in \OO(X\times \Psi)^{\times}$$
such that for all complex geometric points $(x, \psi)$ of $X\times \Psi$ we have 
$$\eps(r')(x, \psi) = \eps(r'_{x}, \psi).$$
\end{coro}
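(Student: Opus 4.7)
The plan is to define $\eps(r')$ via the Deligne--Langlands functional equation
\[
\eps(r') := \gamma(r)\cdot \frac{L(r')}{L(r'^{*}(1))} \in \mathscr{K}(X \times \Psi),
\]
where $\gamma(r)$ is the meromorphic function provided by Theorem \ref{interp gamma} applied to the underlying $W_{F}$-representation $r$ (recall that $\gamma$ is independent of the monodromy), and $L(r')^{-1}$, $L(r'^{*}(1))^{-1}$ are the regular functions on $X$ furnished by Proposition \ref{interp Lv}. Both $L$-factors are defined because local constancy of monodromy is preserved under duality and Tate twist. A priori the resulting $\eps(r')$ is only a meromorphic section, and the task is to show it is regular and invertible on $X \times \Psi$.

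At each complex geometric point $(x, \psi)$ of $X \times \Psi$, direct evaluation gives
\[
\eps(r')(x, \psi) = \gamma(r_{x}, \psi) \cdot \frac{L((r_{x}, N_{x}))}{L((r_{x}, N_{x})^{*}(1))} = \eps(r'_{x}, \psi) \in \C^{\times},
\]
by the classical Deligne--Langlands identity. So the meromorphic function $\eps(r')$ takes finite and nonzero values at every geometric point. To promote this to regularity and invertibility I would reduce to the universal case via Theorem \ref{coarseWD}: the representation $r'$ induces a classifying map $\alpha' \colon X \to \frak X_{n}'$ under which $\eps(r')$ is the pullback along $\alpha' \times \mathrm{id}_{\Psi}$ of a universal construction, so it suffices to exhibit $\eps$ as a unit on $\frak X_{n}' \times \Psi$.

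On each connected component $\frak X_{[\frak s], [t]}$ of $\frak X_{n}'$ (finite-type over $\baar\Q$ after base change), the universal Weil--Deligne representation of \eqref{univ wd} is fully explicit, so $L^{-1}$, $L^{-1}_{\rm ss}$ and the divisor of $\gamma$ can all be written down directly. Using Remark \ref{non-constant mon}, the divisor of $\eps$ on the $\frak X_{n}'$-factor appears as the difference of two effective divisors (the excess of $L^{-1}_{\rm ss}(r)$ over $L^{-1}(r')$, and its analogue for $r'^{*}(1)$). Since $\eps$ is finite and nonzero at every closed point of a finite-type component, this difference must vanish; hence $\eps$ is a unit on each component, glues to a unit on $\frak X_{n}' \times \Psi$, and pulls back to the required section $\eps(r') \in \OO(X \times \Psi)^{\times}$. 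Uniqueness follows because a regular function on a reduced Noetherian scheme is determined by its values at geometric points.

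The main obstacle is the divisor-matching step in the universal case: after decomposing $(r,N)_{[\frak s],[t]}$ into Steinberg blocks $\mathrm{Sp}(\wtil\tau_i, t_{i,j})$, the identity reduces to a polynomial identity in Frobenius eigenvalues expressing that the Langlands dual-twist sends a segment to one of the same length, so that the two effective divisors cut out by the excesses $L^{-1}_{\rm ss}/L^{-1}$ for $r'$ and for $r'^{*}(1)$ coincide on the covering $\frak X^{\square}_{[\frak s],[t]}$. If one prefers to avoid the explicit matching, it can be replaced by the general principle that a meromorphic function on a reduced finite-type $K$-scheme whose value at every closed point is finite and nonzero must be a unit.
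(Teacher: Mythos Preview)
Your strategy matches the paper's: define $\eps(r') := \gamma(r)\cdot L(r')/L(r'^{*}(1))$ as a meromorphic function, reduce via Theorem~\ref{coarseWD} to the universal Weil--Deligne representation over a component of $\frak X_{n}'$, and show the resulting function is a unit there.

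The gap is in your primary justification of that last step. You assert that ``direct evaluation'' at each complex geometric point gives $\eps(r'_x,\psi)\in\C^\times$, and then invoke the principle that a meromorphic function with finite nonzero value at every closed point of a reduced finite-type scheme is a unit. But at a point where $L^{-1}(r'_x)=0$ one has $L(r'_x)=\infty$ while simultaneously $\gamma(r_x,\psi)$ vanishes (since $L^{-1}(r')$ divides $L^{-1}_{\rm ss}(r)$, which sits in the numerator of $\gamma$), so the product $\gamma(r_x,\psi)\cdot L(r'_x)/L(r'^{*}_x(1))$ is an indeterminate $0\cdot\infty$. The classical Deligne--Langlands identity tells you what the answer \emph{should} be, namely $\eps(r'_x,\psi)$, but it does not tell you that the particular meromorphic function you wrote down is regular at $x$. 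So the hypothesis of your ``general principle'' is not verified; the argument is circular.

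The paper avoids this by computing the combined ratio
\[
\frac{L(r')}{L(r'^{*}(1))}\cdot\frac{L_{\rm ss}(r^{*}(1))}{L_{\rm ss}(r)}
\]
explicitly as $\det\bigl(-\phi\,\bigl|\, r^{I_F}/\Ker(N)^{I_F}\bigr)$, which is manifestly a regular unit because $\Ker(N)$ is locally free and $\phi$ is invertible. The identity is checked on geometric points by reduction to a single block ${\rm Sp}(\tau,m)$. Your final paragraph in fact sketches exactly this computation (``the Langlands dual-twist sends a segment to one of the same length, so the two excess divisors coincide''); that sketch is correct and is essentially the paper's argument. Promote it from fallback to the main line and drop the pointwise-evaluation shortcut.
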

For a related result proved by a  different method, see \cite{ces}.
\begin{proof}
By Theorem \ref{coarseWD} it suffices to consider the case of the  Weil--Deligne representation $r'=(r, N)=(r, N)_{[\frak{s}], [t]}$ over  $\frak X_{[\frak s], [t]}^{\square}$ of \eqref{univ wd}, and show that the resulting function  $\eps(r')$ descends to $\frak X_{[\frak s], [t]}$.  By  Theorem  \ref{interp gamma} and Proposition \eqref{interp Lv}, we may define 
$$\eps(r') := \gamma(r)   { L(r')\over L(r'^{*}(1))} =  \gamma(r)   {L^{-1}(r'^{*}(1))  \over L^{-1}(r')}.$$
That $\eps(r')$  has neither zeros nor poles is, by Theorem \ref{interp gamma}, equivalent to the same statement for the ratio 
\beq\lb{Lratio} {L(r')  \over L(r'^{*}(1))}\cdot  {L(r^{*}(1)  \over L(r)}.\eeq
We assert that in fact $\eqref{Lratio}=\det(-\phi| r^{I_{F}}/ \Ker(N)^{I_{F}})$, which is well-defined as $\Ker(N)$ is locally free. 
 The assertion can be checked on geometric points and reduced to the case of a  representation of the form ${\rm Sp}(r, m)$, which is easy to verify.
\end{proof}

Special cases or variants of  the following corollary were  proved in   \cite{parity3},  \cite{DL}, and \cite{PX}.

\begin{coro} Let $X$ be a connected noetherian scheme over $K$ and let  $r'$ be a Weil--Deligne representation on a locally free sheaf over $X$.  Let $X^{\rm pure}\subset |X|$ be the set of those $x$ such that the monodromy filtration on  $r_{x}$  is pure. 

Suppose that there is an isomorphism $r'\cong r'^{*}(1)$.  Then the function 
\beqq
\eps \colon X^{\rm pure}&\to \{\pm 1\} \\
 x&\mapsto \eps(r'_{x}):=\eps(r'_{x}, \psi) \qquad \text{(for any $\psi\colon F\to \baar{K(x)}^{\ts}$)}
\eeqq
is constant.
\end{coro}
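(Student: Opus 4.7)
The plan is to extend $\eps(r')$ to a regular unit on all of $X \times \Psi$ by exploiting the self-duality, then derive the global parity $\eps(r')^2 = 1$ and $\psi$-independence from the standard local functional equation interpolated in families, and finally conclude by connectedness of $X$.

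By Proposition \ref{nowhere dense}, $X^{\rm pure}$ is contained in the complement $X - D$ of a closed nowhere dense $D$, on which $(r, N)$ has locally constant monodromy; Corollary \ref{ghj} then provides $\eps(r') \in \OO((X - D) \times \Psi)^{\times}$. The isomorphism $r' \cong r'^{*}(1)$ gives $r \cong r^{*}(1)$ as Weil representations, so $L_{\rm ss}(r) = L_{\rm ss}(r^{*}(1))$ and the divisor of $\gamma(r)$ computed in Theorem \ref{interp gamma} is trivial: $\gamma(r) \in \OO(X \times \Psi)^{\times}$. Since also $L(r') = L(r'^{*}(1))$ on $X - D$, the defining formula $\eps(r') = \gamma(r) \cdot L(r')/L(r'^{*}(1))$ gives $\eps(r') = \gamma(r)$ on $(X - D) \times \Psi$, so $\eps(r')$ extends to the regular unit $\gamma(r) \in \OO(X \times \Psi)^{\times}$.

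Next, the standard local functional equation $\eps(r', \psi) \cdot \eps(r'^{*}, \psi^{-1}) = 1$ and the explicit shift formulas for $\eps$ under Tate twists and under $\psi \mapsto \psi_{a}$ for $a \in F^{\times}$ (which interpolate in families via Corollary \ref{ghj} and Theorem \ref{interp gamma}) combine, when applied to the global self-duality $r' \cong r'^{*}(1)$, to give the identities $\eps(r')^{2} = 1$ and the triviality of the character of $\OO_{F}^{\times}$ governing the $\psi$-dependence of $\eps(r')$. Thus $\eps(r')$ takes values in $\{\pm 1\}$ and is independent of $\psi$, so descends to a locally constant function $X \to \{\pm 1\}$. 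By connectedness of $X$, this function is constant, and its restriction to $X^{\rm pure}$ is the function of the statement.

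The main obstacle is the previous paragraph: although the parity $\eps(r'_{x}) = \pm 1$ and $\psi$-independence at individual pure $x$ are classical facts, promoting them to identities of regular units on all of $X \times \Psi$ requires carefully interpolating the local functional equation and the twisting formulas throughout the family, ultimately via the uniqueness clauses of Corollary \ref{ghj} and Theorem \ref{interp gamma}. A possible alternative that bypasses this issue would be to argue, via the universal property of $\frak X'_{n}$ from Theorem \ref{coarseWD}, that it suffices to check both identities on the universal Weil--Deligne representation over the self-dual locus of $\frak X'_{n}$, where the claim reduces to an already-known statement in the absolute setting.
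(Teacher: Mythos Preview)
Your argument is correct and tracks the paper's one-line proof (which just cites Corollary~\ref{ghj} and Proposition~\ref{nowhere dense}), while making explicit a step the paper leaves to the reader: the extension of $\eps(r')$ from $(X-D)\times\Psi$ to all of $X\times\Psi$ via the identity $\eps(r')=\gamma(r)$, valid under self-duality because then the divisor of $\gamma(r)$ in Theorem~\ref{interp gamma} is trivial and $L(r')=L(r'^{*}(1))$ on $X-D$. This extension is what allows you to invoke connectedness of $X$ itself rather than of the open $X-D$, which need not be connected when $X$ is reducible.

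Your ``main obstacle'', however, is not one. Once $\gamma(r)\in\OO(X\times\Psi)^{\times}$, the identity $\gamma(r)^{2}=1$ follows immediately by reducedness of $X$ (assumed throughout \S\ref{sec: Lz}) from the fact that it holds at every complex point of the dense open $(X-D)\times\Psi$, where $\gamma(r)=\eps(r')$ and the classical computation of root numbers of self-dual Weil--Deligne representations applies pointwise; there is no need to interpolate any functional equation in families. Then $\gamma(r)$ is a locally constant $\{\pm1\}$-valued function, hence constant along the connected $X$ for each fixed $\psi$, and the $\psi$-independence follows by evaluating at any single point of $X^{\rm pure}$ and invoking the classical fact there.
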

\begin{proof} This follows from Corollary \ref{ghj} and Proposition \ref{nowhere dense}.
\end{proof}

\begin{bibdiv}

\begin{biblist}

\bib{BCh}{article}{
   author={Bella{\"{\i}}che, Jo{\"e}l},
   author={Chenevier, Ga{\"e}tan},
   title={Families of Galois representations and Selmer groups},
   language={English, with English and French summaries},
   journal={Ast\'erisque},
   number={324},
   date={2009},
   pages={xii+314},
   issn={0303-1179},
   isbn={978-2-85629-264-8},
   review={\MR{2656025}},
}

\bib{BD}{article}{
author={Bernstein, Joseph}, author={Deligne, Pierre},
	title={Le ``centre'' de Bernstein},
conference={title={Repr\'esentations des groups r\'eductifs sur un corps local}},
book={
      series={Travaux en cours.},
      volume={10},
      publisher={Hermann, Paris},},
   date={1984},
   pages={1-32},
}
\bib{BS}{article}{
   author={Breuil, Christophe},
   author={Schneider, Peter},
   title={First steps towards $p$-adic Langlands functoriality},
   journal={J. Reine Angew. Math.},
   volume={610},
   date={2007},
   pages={149--180},
   issn={0075-4102},
   review={\MR{2359853}},
   doi={10.1515/CRELLE.2007.070},
}
\bib{BK}{book}{
   author={Bushnell, Colin J.},
   author={Kutzko, Philip C.},
   title={The admissible dual of ${\rm GL}(N)$ via compact open subgroups},
   series={Annals of Mathematics Studies},
   volume={129},
   publisher={Princeton University Press, Princeton, NJ},
   date={1993},
   pages={xii+313},
   isbn={0-691-03256-4},
   isbn={0-691-02114-7},
   review={\MR{1204652}},
   doi={10.1515/9781400882496},
}
	
\bib{BH}{article}{
   author={Bushnell, Colin J.},
   author={Henniart, Guy},
   title={Generalized Whittaker models and the Bernstein center},
   journal={Amer. J. Math.},
   volume={125},
   date={2003},
   number={3},
   pages={513--547},
   issn={0002-9327},
   review={\MR{1981032}},
}

\bib{ces}{article}{
   author={\v Cesnavi\v cius, K\polhk estutis},
   title={Local factors valued in normal domains},
   journal={Int. J. Number Theory},
   volume={12},
   date={2016},
   number={1},
   pages={249--272},
   issn={1793-0421},
   review={\MR{3455278}},
   doi={10.1142/S1793042116500159},
}

\bib{clozel}{article}{
   author={Clozel, Laurent},
   title={Motifs et formes automorphes: applications du principe de
   fonctorialit\'e},
   language={French},
   conference={
      title={Automorphic forms, Shimura varieties, and $L$-functions, Vol.\
      I},
      address={Ann Arbor, MI},
      date={1988},
   },
   book={
      series={Perspect. Math.},
      volume={10},
      publisher={Academic Press, Boston, MA},
   },
   date={1990},
   pages={77--159},
   review={\MR{1044819}},
}

\bib{cps}{article}{author={Cogdell, James}, author= {Piatetski-Shapiro, I.I.},
title={  Derivatives and L-functions for $GL(n)$}, conference={title={ The Heritage of B. Moishezon}, address={IMCP},date={ 2010}}}

\bib{pyzz}{article}{
   author={Disegni, Daniel},
   title={The $p$-adic Gross-Zagier formula on Shimura curves},
   journal={Compos. Math.},
   volume={153},
   date={2017},
   number={10},
   pages={1987--2074},
   issn={0010-437X},
   review={\MR{3692745}},
   doi={10.1112/S0010437X17007308},
}

\bib{dd-univ}{article}{
   author={Disegni, Daniel},
title={The universal $p$-adic Gross--Zagier formula},
status={preprint}}

\bib{EH}{article}{
   author={Emerton, Matthew},
   author={Helm, David},
   title={The local Langlands correspondence for ${\rm GL}_n$ in
   families},
   language={English, with English and French summaries},
   journal={Ann. Sci. \'Ec. Norm. Sup\'er. (4)},
   volume={47},
   date={2014},
   number={4},
   pages={655--722},
   issn={0012-9593},
   review={\MR{3250061}},
}

\bib{GW}{book}{
   author={G\"{o}rtz, Ulrich},
   author={Wedhorn, Torsten},
   title={Algebraic geometry I},
   series={Advanced Lectures in Mathematics},
   note={Schemes with examples and exercises},
   publisher={Vieweg + Teubner, Wiesbaden},
   date={2010},
   pages={viii+615},
   isbn={978-3-8348-0676-5},
   review={\MR{2675155}},
   doi={10.1007/978-3-8348-9722-0},
}
	
\bib{helm-bern}{article}{
   author={Helm, David},
   title={The Bernstein center of the category of smooth $W(k)[{\rm
   GL}_n(F)]$-modules},
   journal={Forum Math. Sigma},
   volume={4},
   date={2016},
   pages={e11, 98},
   issn={2050-5094},
   review={\MR{3508741}},
   doi={10.1017/fms.2016.10},
}
	
\bib{helm-whitt}{article}{
   author={Helm, David},
   title={Whittaker models and the integral Bernstein center for $\rm{GL}_n$},
   journal={Duke Math. J.},
   volume={165},
   date={2016},
   number={9},
   pages={1597--1628},
   issn={0012-7094},
   review={\MR{3513570}},
   doi={10.1215/00127094-3450422},
}

\bib{DL}{article}{
   author={},
 author={Helm, David},author={Moss, Gilbert}, title={Deligne--Langlands gamma factors in families}, status={preprint}}

\bib{helm-curtis}{article}{
   author={Helm, David},
	title={Curtis homomorphisms and the integral Bernstein center for ${\rm GL}_{n}$}, status={preprint}}

\bib{hm-conv}{article}{
   author={Helm, David},
   author={Moss, Gilbert},
   title={Converse theorems and the local Langlands correspondence in
   families},
   journal={Invent. Math.},
   volume={214},
   date={2018},
   number={2},
   pages={999--1022},
   issn={0020-9910},
   review={\MR{3867634}},
   doi={10.1007/s00222-018-0816-y},
}
\bib{ill94}{article}{
   author={Illusie, Luc},
   title={Autour du th\'eor\`eme de monodromie locale},
   language={French},
   note={P\'eriodes $p$-adiques (Bures-sur-Yvette, 1988)},
   journal={Ast\'erisque},
   number={223},
   date={1994},
   pages={9--57},
   issn={0303-1179},
   review={\MR{1293970}},
}
\bib{euler products}{article}{
   author={Jacquet, H.},
   author={Shalika, J. A.},
   title={On Euler products and the classification of automorphic
   representations. I},
   journal={Amer. J. Math.},
   volume={103},
   date={1981},
   number={3},
   pages={499--558},
   issn={0002-9327},
   review={\MR{618323}},
   doi={10.2307/2374103},
}
	
\bib{JPPS}{article}{
   author={Jacquet, H.},
   author={Piatetskii-Shapiro, I. I.},
   author={Shalika, J. A.},
   title={Rankin-Selberg convolutions},
   journal={Amer. J. Math.},
   volume={105},
   date={1983},
   number={2},
   pages={367--464},
   issn={0002-9327},
   review={\MR{701565}},
   doi={10.2307/2374264},
}

\bib{JNS}{article}{
   author={Johansson, Christian},author={Newton, James}, author={Sorensen, Claus}, title={Local Langlands correspondence in rigid families}, date={2017}, status={preprint}}

 \bib{kilford}{article}{
   author={Kilford, L. J. P.},
   title={Some non-Gorenstein Hecke algebras attached to spaces of modular
   forms},
   journal={J. Number Theory},
   volume={97},
   date={2002},
   number={1},
   pages={157--164},
   issn={0022-314X},
   review={\MR{1939142}},
}
		
\bib{moss-gamma}{article}{
   author={Moss, Gilbert},
   title={Gamma factors of pairs and a local converse theorem in families},
   journal={Int. Math. Res. Not. IMRN},
   date={2016},
   number={16},
   pages={4903--4936},
   issn={1073-7928},
   review={\MR{3556429}},
   doi={10.1093/imrn/rnv299},
}

\bib{moss-interpol}{article}{
   author={Moss, Gilbert},
   title={Interpolating local constants in families},
   journal={Math. Res. Lett.},
   volume={23},
   date={2016},
   number={6},
   pages={1789--1817},
   issn={1073-2780},
   review={\MR{3621107}},
   doi={10.4310/MRL.2016.v23.n6.a10},
}

\bib{parity3}{article}{
   author={Nekov\'a\v r, Jan},
   title={On the parity of ranks of Selmer groups. III},
   journal={Doc. Math.},
   volume={12},
   date={2007},
   pages={243--274},
   issn={1431-0635},
   review={\MR{2350290}},
}
	
\bib{Pau11}{article}{
   author={Paulin, Alexander G. M.},
   title={Local to global compatibility on the eigencurve},
   journal={Proc. Lond. Math. Soc. (3)},
   volume={103},
   date={2011},
   number={3},
   pages={405--440},
   issn={0024-6115},
   review={\MR{2827001}},
   doi={10.1112/plms/pdr007},
}

\bib{PX}{article}{
	author={Pottharst, Jonathan},
	author={Xiao, Liang}, 
	title={On the parity conjecture in finite-slope families}, 
	status={preprint}
}

\bib{saha}{article}{
   author={Saha, Jyoti Prakash},
   title={Purity for families of Galois representations},
   language={English, with English and French summaries},
   journal={Ann. Inst. Fourier (Grenoble)},
   volume={67},
   date={2017},
   number={2},
   pages={879--910},
   issn={0373-0956},
   review={\MR{3669514}},
}

\bib{renard}{book}{
   author={Renard, David},
   title={Repr\'{e}sentations des groupes r\'{e}ductifs $p$-adiques},
   language={French},
   series={Cours Sp\'{e}cialis\'{e}s [Specialized Courses]},
   volume={17},
   publisher={Soci\'{e}t\'{e} Math\'{e}matique de France, Paris},
   date={2010},
   pages={vi+332},
   isbn={978-2-85629-278-5},
   review={\MR{2567785}},
}

\bib{urban-adj}{article}{
   author={Urban, Eric}
title={Groupes de Selmer et fonctions $L$ $p$-adiques pour les representations modulaires adjointes}, status={preprint}, date= {2006}}

\bib{wei-aut}{article}{
   author={Zhang, Wei},
   title={Automorphic period and the central value of Rankin-Selberg
   L-function},
   journal={J. Amer. Math. Soc.},
   volume={27},
   date={2014},
   number={2},
   pages={541--612},
   issn={0894-0347},
   review={\MR{3164988}},
   doi={10.1090/S0894-0347-2014-00784-0},
}

\end{biblist}
\end{bibdiv}

\end{document}